\DeclareMathOperator{\id}{id}
\DeclareMathOperator{\im}{im}
\DeclareMathOperator{\Hom}{Hom}
\DeclareMathOperator{\Ext}{Ext}
\newcommand{\F}{\ensuremath{\mathbb{F}}}
\newcommand{\N}{\ensuremath{\mathbb{N}}}
\newcommand{\Z}{\ensuremath{\mathbb{Z}}}
\newcommand{\R}{\ensuremath{\mathbb{R}}}
\newcommand{\C}{\ensuremath{\mathbb{C}}}
\newtheorem{theorem}{Theorem}[section]
\newtheorem{proposition}[theorem]{Proposition}
\newtheorem{lemma}[theorem]{Lemma}
\newtheorem{fact}[theorem]{Fact}
\newtheorem{question}[theorem]{Question}
\theoremstyle{definition}
\newtheorem{definition}[theorem]{Definition}
\theoremstyle{remark}
\newtheorem*{remark}{Remark}
\DeclareMathOperator{\Th}{Th}
\DeclareMathOperator{\Gr}{Gr}
\begin{document}
	\begin{titlepage}
	\begin{textblock}{5}(2.3,1.5)
		\includegraphics[width=0.9\textwidth]{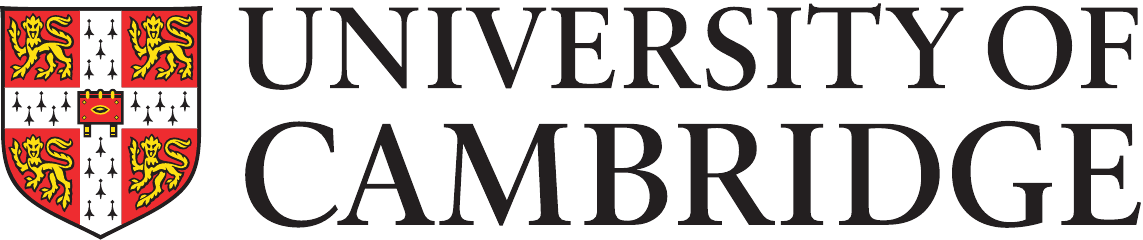}
	\end{textblock}
	
	\begin{center}
		\vspace*{3.5cm}
		
		\textsc{\huge Nilpotence Theorem in Stable}
		
		\vspace{0.3 cm}
		
		\textsc{\huge Homotopy Theory}

		\vspace{1cm}
		
		\textbf{David Popović}
		
		\vspace{0.1cm}
		
		Part III Essay
		
		\vfill
		
		Department of Pure Mathematics and Mathematical Statistics\\
		
		\vspace{0.1 cm}
		
		Supervised by Prof. Oscar Randal-Williams
		
		\vfill
		
	\end{center}
	Cambridge, 2021
\end{titlepage}
	\tableofcontents
	\newpage

\chapter{Introduction}
	Consider the following generic situation in algebraic topology:
	\begin{question}
		Let $X$ and $Y$ be topological spaces and $f: X \to Y$ a map. What can be said about $f$?
	\end{question}
	The answer to the question posed in such generality is `nothing', but plenty of genuinely interesting and tractable questions emerge by restricting $X$ or $Y$ to some class of spaces and imposing conditions on $f$. Some examples include
	\begin{enumerate}[itemsep=0pt, label=$-$]
		\item For which pairs $(n, k)$ does there exist a retraction $\R P^n \to \R P^k$?
		\item What is the order of a given $f:S^k  \to Y$ in $\pi_k(Y)$?
		\item For which $g$ is there a map $ \Sigma_g \to \Sigma_1 $ of positive degree?
	\end{enumerate}
	and perhaps the most fundamental result in this direction would be to establish whether a given map $f: X \to Y$ is non-trivial at all. One's immediate instinct may be to take a generalized homology theory $E_*$ and consider the induced map $E_*(f): E_*(X) \to E_*(Y)$ on homology. If $E_*(f)$ is a non-zero homomorphism, then $f$ is not null-homotopic and our question has been answered.
	
	However, the converse is false -- $E_*(f)$ being the zero map does not imply that $f$ is null-homotopic. There may well exist other homology theories testifying to the contrary. But \emph{a priori} there is no guarantee that we can detect the fact that $f$ is not null-homotopic with \emph{any} theory.
	\begin{question}
		Is there a generalised homology theory $E_*$ such that $f: X \to Y$ is null-homotopic iff $\widetilde{E}_*(f)=0$?
	\end{question}
	This question is still too general to be tractable. Instead suppose that the maps we would like to study are \emph{self-maps} $f: \Sigma^d X \to X$ for some $d \geq 0$ where $\Sigma X$ denotes the reduced suspension of $X$. Such maps can be composed into
	$$f^k: \Sigma ^{kd} \xrightarrow{\Sigma^{(k-1)d}f} \Sigma^{(k-1)d} \to \cdots \to \Sigma X \xrightarrow{f} X.$$
	We call $f$ \emph{nilpotent} if $f^k$ is null-homotopic for some $k$ and we would like to know whether there exists a homology theory detecting that.
	
	\vspace{1em}
	
	Note that two important simplifications were made. We have removed the space $Y$ from consideration and we have transferred the question to the realm of stable homotopy theory where experience suggests that problems become easier. Under these hypotheses we obtain the following remarkable result:
	
	\begin{theorem}[Classical formulation of the nilpotence theorem]
		There is a generalised homology theory $MU_*$ such that a self-map $f: \Sigma^d X \to X$ of a finite CW-complex $X$ is nilpotent iff some iterate of $\widetilde{MU}_*(f)$ is trivial.
	\end{theorem}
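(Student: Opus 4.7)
The direction $(\Rightarrow)$ is immediate: if $f^k$ is null-homotopic then, by the functoriality of $\widetilde{MU}_*$, the iterated product $\widetilde{MU}_*(f)^k = \widetilde{MU}_*(f^k)$ vanishes. The content lies in the converse, and my plan is to first reduce it to a purely ring-theoretic statement. Taking the Spanier--Whitehead dual $DX$, a self-map $f \colon \Sigma^d X \to X$ corresponds under adjunction to a class $\alpha \in \pi_d(DX \wedge X)$; the spectrum $R := DX \wedge X$ is an associative ring spectrum under composition, iterates of $f$ correspond to powers of $\alpha$, and the hypothesis on $\widetilde{MU}_*(f)$ translates to $\alpha$ lying in the kernel of the Hurewicz homomorphism $h \colon \pi_*(R) \to MU_*(R)$. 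The theorem thus follows from the \emph{ring spectrum nilpotence theorem}: for any associative ring spectrum $R$, every element of $\ker h$ is nilpotent.

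To prove this ring-theoretic version I would pursue a chromatic strategy. Fix a prime $p$; then $MU_{(p)}$ splits as a wedge of suspensions of the Brown--Peterson spectrum $BP$, with coefficient ring $\Z_{(p)}[v_1, v_2, \ldots]$. The Morava $K$-theories $K(n)$, whose coefficient rings $\F_p[v_n^{\pm 1}]$ behave like graded fields on the stable category, together (as $n$ and $p$ vary) detect $MU_*$-triviality in the sense that an element $\alpha \in \ker h$ necessarily satisfies $K(n)_*(\alpha) = 0$ for every $n$. One then studies the ``telescope'' $\alpha^{-1} R := \mathrm{hocolim}\bigl(R \to \Sigma^{-d} R \to \Sigma^{-2d} R \to \cdots\bigr)$: the goal is to show that vanishing of all $K(n)_*(\alpha)$ forces $\alpha^{-1} R \simeq \ast$, which is equivalent to the nilpotence of $\alpha$ because $R$ is a compact object in the stable category (here the finiteness of $X$ is essential) and so $1_R$ can be null in the telescope only if it is null at some finite stage.

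The principal obstacle is bridging the gap between the \emph{homological} vanishing $h(\alpha) = 0$ and the \emph{geometric} conclusion that a power of $\alpha$ is null-homotopic. Turning Morava $K$-theoretic triviality into an actual null-homotopy requires delicate control of the Adams--Novikov spectral sequence converging to $\pi_*(R)$: one needs a \emph{vanishing line}, asserting that above a certain slope the $E_2$-term vanishes, so that classes in sufficiently high filtration cannot survive to detect a nonzero power of $\alpha$. In the Devinatz--Hopkins--Smith argument this is achieved by constructing intricate filtrations on the smash powers $R^{\wedge k}$ whose associated graded pieces are Thom spectra over skeleta of Eilenberg--MacLane spaces, from which the desired vanishing line is extracted via computations with Dyer--Lashof-type operations. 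This combinatorial-algebraic heart of the proof is the step I would expect to consume nearly all of the work; the ring-spectrum reduction and the chromatic framework, by contrast, are relatively formal.
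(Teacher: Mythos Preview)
Your reduction of the self-map statement to the ring-spectrum nilpotence theorem via Spanier--Whitehead duality is correct and is exactly how the paper proceeds in Section~3.7. Two small corrections: the hypothesis that some iterate of $\widetilde{MU}_*(f)$ vanishes does not directly give $h(\alpha)=0$, only that $MU\wedge f^{-1}X$ is contractible; finiteness of $X$ is then used to conclude $MU\wedge f^N\simeq *$ for some $N$, whence $h(\alpha^N)=0$. And compactness of $R$ is \emph{not} what makes $\alpha^{-1}R\simeq *$ imply nilpotence of $\alpha$: the colimit $\pi_*(\alpha^{-1}R)=0$ already forces $\alpha^m\cdot 1=0$ for some $m$.

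The genuine gap is in your strategy for the ring-spectrum theorem. The chromatic route you propose---reduce to $K(n)_*(\alpha)=0$ for all $n$, then argue this forces $\alpha^{-1}R\simeq *$---is circular: the statement that the Morava $K$-theories jointly detect nilpotence is the Hopkins--Smith theorem from the \emph{sequel} paper, and its proof takes the $MU$ nilpotence theorem as input. You cannot invoke it here. Your description of the Devinatz--Hopkins--Smith mechanism is also off: there are no filtrations of $R^{\wedge k}$ and no Thom spectra over Eilenberg--MacLane skeleta.

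What the paper actually does is filter $MU$ itself by the Thom spectra $X(n)$ of $\Omega SU(n)\hookrightarrow\Omega SU\simeq BU$, so that $X(1)=S$ and $X(\infty)=MU$, and run a downward induction: if $X(n{+}1)_*(\alpha)$ is nilpotent then so is $X(n)_*(\alpha)$. The inductive step is $p$-local and uses a further tower $X(n)_{(p)}=G_0,G_1,\ldots$ interpolating to $X(n{+}1)_{(p)}$. The vanishing-line ingredient you mention does appear (Step~I), coming from an explicit geometric Adams resolution of $G_j$ based on $X(n{+}1)$. But the heart of the proof, which your proposal does not approach, is Step~II: establishing the Bousfield equivalence $\langle G_j\rangle=\langle G_{j+1}\rangle$ by constructing a self-map $b$ of $G_j$, factoring it through the Snaith splitting of $\Sigma^\infty\Omega^2 S^{2m+1}_+$, and invoking Mahowald's identification of the resulting telescope with $H\F_p$ to show $b^{-1}G_j\simeq *$. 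This Bousfield-class argument, not chromatic fracture, is what drives the proof.
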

	The statement of the theorem says that a certain homology theory -- a coarse algebraic invariant that is supposed to be easily computable -- contains a lot of topological information. Namely, it can determine whether some suspension of $f$ is null-homotopic.
	
	\vspace{1em}
	
	The nilpotence theorem was born as Ravenel's nilpotence conjecture in the early 1980s \cite{hopkins2008mathematical}. Together with Ravenel's other conjectures appearing in his seminal paper \cite{ravenel1984localization}, it guided the direction of research in algebraic topology in the 1980s. The nilpotence conjecture was proven in 1988 by Ethan S. Devinatz, Michael J. Hopkins and Jefferey H. Smith \cite{devinatz1988nilpotence}. In fact, they proved a few closely related and slightly stronger results, one of which is known as the ring spectrum form of the nilpotence theorem.
	
	\begin{theorem}[Nilpotence theorem, ring spectrum form]
		Let $R$ be a ring spectrum and let
		$$h: \pi_*(R) \to MU_*(R)$$
		be the Hurewicz homomorphism. Then every element of $\ker h$ is nilpotent.
	\end{theorem}	
	This essay presents a proof of the theorem. The exposition largely follows the original proof but we deviate from it occasionally. In these instances, our proofs are based on Ravenel's account of the nilpotence theorem in his orange book \cite{ravenel1992nilpotence}.
	
	We do not strive to be concise -- instead, we aim to provide a thorough and well-motivated account of the proof. Whenever using more words allows us to elucidate an argument, we try to do so. This applies both to giving the intuition for the large-scale structure of the proof as well as to supplying technical details whenever they aid understanding.
	
	\vspace{1em}
	
	Having said that, it would be impossible to develop the entire machinery of modern algebraic topology from scratch. We assume the reader is familiar with the Serre and Adams spectral sequences and basic properties of the stable homotopy category, but we recall their properties nonetheless.
	
	\vspace{1em}
	
	The essay is organised as follows: Chapter 2 outlines the background material and sets the stage. The majority of Chapter 3 is dedicated to proving the ring spectrum form of the nilpotence theorem -- the smash product and self-map forms are deduced swiftly at the end. In Chapter 4 we give some applications of the nilpotence theorem and discuss related questions.

\chapter{Background}
	In this chapter we introduce the relevant background material that will be needed for the discussion of the nilpotence theorem and its consequences in the rest of this essay. We begin by introducing the notation and conventions adopted in this work. In Section $2.2$ we state some basic properties of the stable homotopy category $\bf hSp$ and recall definitions of key terms. In Sections $2.3$ and $2.4$ we focus on the James construction and Snaith's splitting. Then we define Thom spectra and introduce $X(n)$ and $F_k$, both of which play crucial roles in the proof. The chapter is concluded with a section about the Adams spectral sequence.  
	
	\section{Notation and conventions}
	This work presents a complicated piece of mathematics with a lot of notation to be defined, used and abused. This section describes the general notation and conventions adopted in this essay.
	
	\vspace{1em}
	
	We mostly work in the topological category $\bf Top$ consisting of weak Hausdorff compactly generated topological spaces and continuous functions. In particular, a \emph{space} means a weak Hausdorff compactly generated topological space and a \emph{map} means a continuous function. Maps labelled by $\hookrightarrow$ are injective and maps labelled by $\twoheadrightarrow$ are surjective. By $\id_X$ we denote the identity map on $X$ and we sometimes omit the subscript if $X$ can be deduced from the context.
	\begin{enumerate}[itemsep=0pt, label=$-$]
		\item $\N = \{  1, 2, 3, \dots  \}$ and $\N_0 = \{ 0, 1, 2, \dots  \}$. A prime number $p$ is fixed.
		\item \emph{Iff} means \emph{if and only if}.
		\item In $\bf Top$, the symbol $\simeq$ denotes a homotopy equivalence and $\cong$ denotes a homeomorphism.
		
		The same symbol $\cong$ is used to denote an isomorphism in any algebraic category. Isomorphisms in the stable homotopy category $\bf hSp$ are called equivalences and are denoted by $\simeq$.
	\end{enumerate}
	Every notational simplification is inherently accompanied by a decrease in the level of precision. Nonetheless, we have adopted the following conventions to aid legibility.
	\begin{enumerate}[itemsep=0pt, label=$-$]
		\item Our spaces are often based, but we are rarely explicit about their basepoints. In particular, $\Sigma X$ denotes the reduced suspension of a based space and $\pi_*(X)$ is $\pi_*(X, x_0)$ for an implicit basepoint $x_0 \in X$.
		\item In $\bf Top$ and $\bf hSp$ we use the same symbol for a map and any of its restrictions and maps on cofibres induced by those. In $\bf hSp$ we additionally use the same symbol for any of (de)suspensions of the original map.
		\item We sometimes denote the spectrum and its $p$-localisation with the same symbol.
	\end{enumerate}
		

	

	\subsection{A note on the exposition}
	In theory, a mathematical proof is a roughly linear sequence of implications. Starting with a state $S_0$, one aims to gradually transform the given hypotheses into conclusions via a sequence of steps
	$$ S_0 \xrightarrow{s_1} S_1 \xrightarrow{s_2} S_2 \xrightarrow{s_3} \dots \xrightarrow{s_n} S_n.$$
	The integer $n$ tends to correlate with the depth and complexity of the argument. When $n$ is large and a mathematician is reading a proof for the first time, they might be unable to understand why $s_1$ brings us any closer to the goal. Their mental representation of the proof could look like this:
	$$ S_1 \xleftarrow{s_1} S_0 \hspace{3cm} S_n.$$
	With that in mind, many authors of mathematical texts write the proofs as follows. Assume $S_{n-1}$ is true. Then we can do $s_n$ and finish. So we only have to show that $S_{n-1}$ is true. Now assume that $S_{n-2}$ is true instead. After applying the reasoning $s_{n-1}$ we can reach the state $S_{n-1}$. But we have previously shown that we are done once we reach $S_{n-1}$. Hence the goal now is to show that $S_{n-2}$ is true. For this assume $S_{n-3}$...
	
	\vspace{1em}	
	
	One can quickly get lost in the jungle of things that are true, things one wishes to be true and things assumed to be true at any given stage of the proof. For this reason, it is my strong personal preference \emph{not} to do that.
	
	Instead, we have opted to explain the global structure of the proof before delving into the details. Introducing significant intermediate goals $S_a$, $S_b$ and $S_c$ prior to the beginning of the proof allows one to appreciate why $s_1$ brings us closer to $S_a$ even if the path to $S_n$ is still hazy.
	
	\vspace{1em}
	
	While the exposition of the proof might be somewhat original, the content is not. We mostly follow the original article by Devinatz-Hopkins-Smith \cite{devinatz1988nilpotence}, which is, to the best of our knowledge, the only complete account of the proof in the literature. The other invaluable reference is Ravenel's sketch in his orange book \cite{ravenel1992nilpotence}. Most of this proof closely follows \cite{devinatz1988nilpotence} with significantly fewer details, but differs from the original in a proof of an important algebraic lemma. At this stage we adopt Ravenel's geometric approach, because it successfully circumvents one of the very technical parts of the original proof.
	
	\vspace{1em}
	
	This essay has also been greatly influenced by Cary Malkiewich's excellent introduction to the stable homotopy category \cite{malkiewich2014stable}, Matthew Akhil's insightful post on MathOverflow \cite{mathew}, Eric Petersen's informal blog entries about the nilpotence theorem \cite{peterson} and the discussions I have had with Oscar.
	
	\section{Stable homotopy category}  
	We mostly work with axiomatic properties of the stable homotopy category $\bf hSp$ and not in some model category of spectra. Whenever a model is needed, we resort to the sequential spectra. In this section we recall the basic properties of $\bf hSp$ following \cite{malkiewich2014stable} and introduce some terminology.
	
	\vspace{1em}
	
	Consider the Quillen model structure on $\bf Top$ in which the weak equivalences are the maps inducing isomorphisms on homotopy groups and fibrations are the Serre fibrations. The class of fibrant objects of $\bf Top$ contains all spaces and the cofibrant objects are the retracts of the CW complexes. Therefore we may form the homotopy category $\bf hTop$ in which the objects are the CW complexes and the morphisms are the homotopy classes of morphisms of $\bf Top$.
	
	Analogously one defines the homotopy category ${\bf hTop}_*$ where ${\bf Top}_*$ is the category of based topological spaces and basepoint-preserving maps.
	
	\vspace{1em}
	
	We now list a few properties of the stable homotopy category $\bf hSp$ for completeness and future use in this essay. The proofs of these properties are obtained by choosing a model category of spectra (for example sequential spectra, the category described by Adams \cite{adams1974stable}, othogonal spectra, symmetric spectra \emph{etc.}) and verifying from there.  
	\begin{fact}
		There is a stabilization functor $\Sigma^\infty: {\bf hTop}_* \to {\bf hSp}$ and it has a right adjoint $\Omega^\infty: {\bf hSp} \to {\bf hTop}_*$.
	\end{fact}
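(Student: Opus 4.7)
The plan is to pick a concrete model of spectra, construct both functors at the point-set level in that model, verify the adjunction there, and then descend to homotopy categories by checking that we have a Quillen adjunction.

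First I would work in the category of sequential spectra $\bf Sp$, whose objects are sequences $\{E_n\}_{n \geq 0}$ of based spaces together with structure maps $\sigma_n: \Sigma E_n \to E_{n+1}$, and whose morphisms are sequences of based maps strictly compatible with the $\sigma_n$. On this category one has the stable model structure in which weak equivalences are maps inducing isomorphisms on stable homotopy groups $\pi_k^s(E) = \mathrm{colim}_n \pi_{k+n}(E_n)$, and $\bf hSp$ is then by definition the associated homotopy category. The suspension spectrum functor is defined by $(\Sigma^\infty X)_n = \Sigma^n X$ with $\sigma_n = \id_{\Sigma^{n+1} X}$, and on morphisms by levelwise application of $\Sigma^n$. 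The infinite loop space functor is defined on an $\Omega$-spectrum by $\Omega^\infty E = E_0$; for a general spectrum one takes $\Omega^\infty E = \mathrm{colim}_n \Omega^n E_n$, so that it computes the derived value on fibrant replacement.

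Next I would verify the adjunction at the point-set level. Given a map $f: \Sigma^\infty X \to E$ of spectra, the component $f_0: X \to E_0$ is a based map, giving a map $X \to \Omega^\infty E$. Conversely, given $g: X \to \Omega^\infty E$, adjunction in $\bf Top_*$ between $\Sigma$ and $\Omega$ combined with the structure maps $\sigma_n$ produces a compatible sequence of maps $\Sigma^n X \to E_n$, hence a map of spectra. These two operations are mutually inverse and natural in $X$ and $E$, establishing the adjunction $\Sigma^\infty \dashv \Omega^\infty$ at the model category level.

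Finally I would check that this is a Quillen adjunction, which then descends to an adjunction between the homotopy categories. For this it suffices to show that $\Omega^\infty$ preserves fibrations and acyclic fibrations between fibrant objects (equivalently, between $\Omega$-spectra), which follows because levelwise fibrations are preserved by evaluation at $0$, and the stable model structure is built so that levelwise fibrations between $\Omega$-spectra are precisely the stable fibrations between fibrant objects. Passing to total derived functors gives the adjoint pair $\Sigma^\infty: {\bf hTop}_* \rightleftarrows {\bf hSp}: \Omega^\infty$ as claimed. The main obstacle is not conceptual but technical: setting up the stable model structure on $\bf Sp$ carefully enough that the fibration and weak equivalence conditions above hold, which is where a reference such as \cite{malkiewich2014stable} is needed.
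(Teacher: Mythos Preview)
Your proposal is essentially correct and follows exactly the route the paper indicates: the paper does not give a proof of this fact at all, but merely remarks that ``the proofs of these properties are obtained by choosing a model category of spectra \ldots\ and verifying from there,'' which is precisely what you do with sequential spectra.

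One small technical point worth tightening: at the point-set level the right adjoint to $\Sigma^\infty$ on sequential spectra is simply evaluation at level $0$, i.e.\ $E \mapsto E_0$, not the colimit $\mathrm{colim}_n \Omega^n E_n$. Your verification of the adjunction (``the component $f_0: X \to E_0$ \ldots'') is really checking $\Sigma^\infty \dashv \mathrm{ev}_0$, and it is this pair that forms the Quillen adjunction. The colimit formula you wrote is the \emph{derived} right adjoint, obtained by first fibrantly replacing $E$ by an $\Omega$-spectrum and then evaluating at $0$. Keeping these two separate makes the Quillen adjunction check cleaner: $\Sigma^\infty$ sends (generating) cofibrations and acyclic cofibrations of $\bf Top_*$ to the corresponding levelwise maps, which are among the generating (acyclic) cofibrations of the stable model structure, so $\Sigma^\infty$ is left Quillen; equivalently $\mathrm{ev}_0$ is right Quillen. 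The derived adjunction on homotopy categories then follows.
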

	\begin{remark}
		It is a common practice in homotopy theory to denote both a based space $X$ and its suspension spectrum $\Sigma^{\infty} X$ by $X$. The context usually prevents ambiguities, but we prefer to be very explicit about the object we have in mind. This is why we do not adopt this convention, except for the sphere spectrum $S=\Sigma^\infty S^0$ and its suspensions $S^n = \Sigma^\infty S^n$.
	\end{remark}
	\begin{fact}
		There are suspension and loopspace functors $\Sigma, \Omega : \bf hSp \to hSp$ which are inverse equivalences. They agree with the usual reduced suspension and based loopspace functors in ${\bf hTop}_*$ in the sense that the diagrams
		$$
		\begin{tikzcd}
		{\bf hTop}_* \arrow[r, "\Sigma"] \arrow[d, "\Sigma^\infty"] &  {\bf hTop}_* \arrow[d, "\Sigma^\infty"]\\
		{\bf hSp} \arrow[r, "\Sigma"]& {\bf hSp}
		\end{tikzcd}
		\quad \text{and} \quad
		\begin{tikzcd}
		{\bf hTop}_*   & {\bf hTop}_* \arrow[l, "\Omega"]  \\
		{\bf hSp} \arrow[u, "\Omega^\infty"] & {\bf hSp} \arrow[u, "\Omega ^\infty"] \arrow[l, "\Omega"]
		\end{tikzcd}
		$$
		commute.
	\end{fact}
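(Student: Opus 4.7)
The plan is to fix a model category of spectra and verify the stated properties there, then descend to the homotopy category. I will work with sequential spectra: a spectrum $X$ consists of a sequence of based spaces $X_n$ together with structure maps $\sigma_n \colon \Sigma X_n \to X_{n+1}$, and morphisms are level-wise based maps commuting with the structure maps. Define $\Sigma$ on sequential spectra level-wise by $(\Sigma X)_n = X_n \wedge S^1$ with the obvious induced structure maps, and define $\Omega$ level-wise by $(\Omega X)_n = \Omega X_n$, using as structure maps the adjoints of $\sigma_n$. Both functors preserve level weak equivalences and level fibrations, so they form a Quillen adjunction with respect to both the level and the stable model structures, and therefore descend to an adjoint pair $\Sigma \dashv \Omega$ on $\bf hSp$.

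The main step is to show that the unit $\eta_X \colon X \to \Omega\Sigma X$ and counit $\epsilon_X \colon \Sigma\Omega X \to X$ of this adjunction are stable equivalences for every spectrum $X$. The classical input is that for a based space $Y$, the Freudenthal suspension theorem gives $\pi_k(Y) \to \pi_{k+1}(\Sigma Y)$ isomorphisms in a range growing with connectivity, hence the maps $\pi_k(Y) \to \pi_{k+1}(\Sigma Y)$ become isomorphisms after taking colimits in the stabilization. Consequently the level-wise maps $\eta$ and $\epsilon$ become isomorphisms on stable homotopy groups $\pi_k^s(X) = \operatorname{colim}_n \pi_{k+n}(X_n)$. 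I would then invoke that the stable model structure is precisely the left Bousfield localization where the weak equivalences are the maps inducing isomorphisms of stable homotopy groups, so $\eta$ and $\epsilon$ become isomorphisms in $\bf hSp$. This exhibits $\Sigma$ and $\Omega$ as mutually inverse equivalences.

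For the compatibility diagrams, I would argue essentially by definition. The suspension spectrum $\Sigma^\infty X$ has $(\Sigma^\infty X)_n = \Sigma^n X$ with the identity structure maps, so $\Sigma\Sigma^\infty X$ and $\Sigma^\infty \Sigma X$ are visibly isomorphic as sequential spectra, making the left square commute on the nose; passing to $\bf hSp$ then gives the stated commutativity. The right square follows formally: $\Omega^\infty$ is right adjoint to $\Sigma^\infty$, $\Omega$ is right adjoint to $\Sigma$ on both sides, and right adjoints are unique up to unique natural isomorphism, so the composites $\Omega \circ \Omega^\infty$ and $\Omega^\infty \circ \Omega$ both serve as right adjoints to $\Sigma^\infty \circ \Sigma \simeq \Sigma \circ \Sigma^\infty$; hence they are naturally isomorphic. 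Alternatively, one can use the explicit model $\Omega^\infty X = \operatorname{colim}_n \Omega^n X_n$ (after fibrant replacement) and observe that $\Omega$ commutes with the filtered colimit over $n$ up to weak equivalence.

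The main obstacle is the stabilization step: at the level of the naive level-wise model structure $\Sigma$ and $\Omega$ are \emph{not} inverse equivalences, and one genuinely needs the stable model structure and the identification of stable weak equivalences with $\pi_*^s$-isomorphisms to conclude that $\eta$ and $\epsilon$ are invertible. Everything else is formal manipulation with the definitions of sequential spectra and the suspension and loopspace functors.
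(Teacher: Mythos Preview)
The paper does not actually prove this statement: it is listed as one of several ``Facts'' about $\bf hSp$, preceded by the remark that ``the proofs of these properties are obtained by choosing a model category of spectra (for example sequential spectra \ldots) and verifying from there.'' Your proposal carries out precisely this programme in the sequential model, which is one of the options the paper explicitly names, and your argument is the standard one.

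One small point of care: your sentence ``both functors preserve level weak equivalences and level fibrations, so they form a Quillen adjunction'' is not quite the right justification. What you need is that the right adjoint $\Omega$ preserves fibrations and trivial fibrations (equivalently, that $\Sigma$ preserves cofibrations and trivial cofibrations); in the stable model structure the fibrations are not simply the level fibrations. This does not affect the substance of the argument, since the real content is your next paragraph---that the unit and counit induce isomorphisms on stable homotopy groups and hence become invertible in $\bf hSp$---and that step is correct. Your treatment of the compatibility squares, especially the adjoint-uniqueness argument for the $\Omega$ square, is clean and entirely adequate.
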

	\begin{fact}
		For any $X, Y \in \bf hSp$, the set of morphisms $\left[X, Y\right] := \Hom_{\bf hSp}(X, Y)$ has the structure of an abelian group. The category $\bf hSp$ contains finite products $X \times Y$, coproducts $X \vee Y$ and the zero object $*$. There are natural isomorphisms
		\begin{align*}
			X \vee * &\to X\\
			X &\to X \times *\\
			X \vee Y &\to X \times Y
		\end{align*}
		induced by the unique maps $* \to X \to *$ making $\bf hSp$ into an additive category.
		
		Moreover, $\bf hSp$ is equipped with a tensor product given by the smash product of spectra $X \land Y$ whose unit is $S$. There are natural isomorphisms $X \land Y \cong Y \land X$, $X \land (Y \land Z) \cong (X \land Y) \land Z$ and the smash product additionally satisfies the triangle, pentagon and hexagon identities. This makes $\bf hSp$ into a symmetric monoidal category. 
	\end{fact}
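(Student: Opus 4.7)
The plan is to work in a concrete model of spectra and deduce each clause from facts already in the excerpt, primarily the suspension/loops equivalence $\Sigma : \mathbf{hSp} \to \mathbf{hSp}$. I would use sequential spectra for the additive structure (this is what the essay uses as its fall-back model) and then pass to orthogonal or symmetric spectra when the smash product enters, since a genuinely symmetric monoidal structure is not available on sequential spectra without passing to the homotopy category.

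First I would establish the abelian group structure on $[X,Y]$. Since $\Sigma$ is an equivalence, every object is equivalent to a double suspension: choose $Z$ with $X \simeq \Sigma^2 Z$. In $\mathbf{hTop}_*$ the pinch map $\Sigma Z \to \Sigma Z \vee \Sigma Z$ makes $[\Sigma Z, W]_*$ into a group and the pinch on $\Sigma^2 Z$ makes $[\Sigma^2 Z, W]_*$ into an abelian group by the Eckmann--Hilton argument. I would then transport this along the adjunction of Fact 2.2.1 and the compatibility square of Fact 2.2.2 to conclude that $[X,Y] = [\Sigma^2 Z, Y]$ is naturally an abelian group in $\mathbf{hSp}$.

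Next I would handle the finite (co)products, the zero object, and the identification $X \vee Y \simeq X \times Y$. The trivial spectrum is clearly initial and terminal in any reasonable model, so $*$ is a zero object. Wedges and products are computed levelwise in sequential spectra, and on stable homotopy groups they induce $\pi_*(X) \oplus \pi_*(Y)$ and $\pi_*(X) \times \pi_*(Y)$ respectively; since these agree for finite wedges, a stable Whitehead argument (homotopy groups detect equivalences in $\mathbf{hSp}$) shows the canonical map $X \vee Y \to X \times Y$ is an equivalence. Combined with the abelian group structure on hom-sets this upgrades $\mathbf{hSp}$ to an additive category in the standard way.

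The main obstacle is the monoidal clause. The naive smash product on sequential spectra fails to be strictly associative or symmetric on the nose, so I would either (i) construct $\land$ directly in $\mathbf{hSp}$ by choosing handicrafted cofibrant replacements and checking the coherence diagrams commute up to homotopy by a cell-by-cell reduction to the case of sphere spectra, where the required swap and associativity maps are explicit sphere equivalences; or (ii) invoke an orthogonal or symmetric spectra model where the smash product is already symmetric monoidal at the model-category level and then descend to $\mathbf{hSp}$ via the derived functor. Either route reduces the triangle, pentagon, and hexagon identities to statements about smash products of spheres, where they follow from the corresponding identities in $\mathbf{hTop}_*$ together with the fact that $S \land X \simeq X$ (proved by observing that $S^n \land -$ represents $\Sigma^n$, which is an equivalence). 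I expect route (ii) to be cleaner but pedagogically heavier, and I would probably present it as a black box with a pointer to the relevant references, which is exactly what the essay does.
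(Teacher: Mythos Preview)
The paper does not prove this statement at all: it is labelled a \emph{Fact} and the surrounding text says only that ``the proofs of these properties are obtained by choosing a model category of spectra \ldots\ and verifying from there.'' So there is nothing to compare your argument against beyond that one-line pointer to the literature.

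Your sketch is a reasonable outline of how one would actually carry this out, and you correctly anticipate in your last sentence that the essay treats the monoidal structure as a black box. A couple of minor remarks: your derivation of the abelian group structure via $X \simeq \Sigma^2 Z$ and Eckmann--Hilton is the standard one, though strictly speaking you should argue that the resulting group law is independent of the choice of $Z$ and natural in both variables (this is routine). For the monoidal clause, route (ii) is the honest one; route (i) (``handicrafted cofibrant replacements and cell-by-cell reduction'') is historically what Adams did, but making the coherence diagrams commute this way is genuinely delicate and not something you could write out in an essay. Since the paper explicitly lists symmetric and orthogonal spectra among the models one may choose, invoking one of those is entirely in keeping with its intent.
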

	\begin{fact}
		In $\bf hSp$ a sequence of morphisms is a homotopy fibre sequence iff it is a homotopy cofibre sequence.
	\end{fact}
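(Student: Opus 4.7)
The plan is to reduce both assertions to a single comparison equivalence. Given a morphism $f: X \to Y$ in $\bf hSp$, let $C_f$ denote the homotopy cofibre and $F_f$ the homotopy fibre of $f$. The composite $F_f \to X \to Y \to C_f$ is null-homotopic, so by the universal property of $F_f$ there is a canonical comparison map $\Omega C_f \to F_f$. I would show that this map is an equivalence in $\bf hSp$, which immediately implies the desired biconditional, since then a fibre sequence $F \to X \to Y$ gives $F \simeq \Omega C_f$, equivalently $\Sigma F \simeq C_f$, which exhibits $X \to Y \to \Sigma F$ (and hence $F \to X \to Y$, shifted) as a cofibre sequence, and conversely.

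To build the comparison and verify it is an equivalence, I would first pass to a model category of spectra (sequential spectra will do) and produce the Puppe extension of the cofibre sequence $X \to Y \to C_f$, namely the long sequence
\[ X \to Y \to C_f \to \Sigma X \to \Sigma Y \to \Sigma C_f \to \cdots \]
in which every three consecutive terms form a cofibre sequence. Since $\Sigma$ and $\Omega$ are inverse equivalences in $\bf hSp$, I may apply $\Omega$ to the segment $C_f \to \Sigma X \to \Sigma Y$ to obtain $\Omega C_f \to X \to Y$ on the left of the Puppe sequence, and iterate to produce a bi-infinite Puppe sequence.

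The check that $\Omega C_f \to F_f$ is an equivalence is then a Yoneda argument: apply $[W,-]$ for an arbitrary $W \in \bf hSp$. The cofibre half of the Puppe sequence yields a long exact sequence of abelian groups, and using the natural isomorphism $[W, \Omega Z] \cong [\Sigma W, Z]$ (which is a formal consequence of $\Sigma \dashv \Omega$ together with $\Sigma$ being an equivalence) this translates into the long exact sequence that characterises fibre sequences. Hence $[W, \Omega C_f] \to [W, F_f]$ is an isomorphism for all $W$, and the comparison map is an equivalence. The converse direction is entirely dual: start with a fibre sequence, extend it by $\Omega$ to a long Puppe sequence, apply $\Sigma$ using that it is an inverse equivalence, and conclude via $[-, W]$.

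The main obstacle is the first step, the construction of the Puppe sequence itself; this is where one must descend from the axiomatic properties of $\bf hSp$ into an actual model of spectra. Once this is available, the rest of the argument is formal, driven entirely by the fact that $\Sigma$ is invertible — which is precisely the feature that distinguishes the stable category from $\bf hTop_*$, where only one of the two implications holds.
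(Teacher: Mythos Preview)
The paper does not prove this statement: it is listed among the background facts about $\bf hSp$ whose proofs ``are obtained by choosing a model category of spectra \dots\ and verifying from there'', so there is no paper proof to compare against. Your outline is the standard one --- build the comparison map $\Omega C_f \to F_f$ and show it is an equivalence --- and is broadly correct.

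There is, however, a circularity in your Yoneda step. You write that applying $[W,-]$ to the cofibre Puppe sequence ``yields a long exact sequence of abelian groups''. But exactness of $[W,-]$ on a triangle is precisely the \emph{fibre} sequence property; cofibre sequences only guarantee exactness of the contravariant functor $[-,Z]$. So as written you are assuming what you want to prove. The adjunction $[W,\Omega Z]\cong[\Sigma W,Z]$ does not rescue this: it converts $[W,\Omega(-)]$ on the cofibre sequence into $[\Sigma W,-]$ on the same sequence, which is still a covariant functor applied to a cofibre sequence.

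The fix is to restrict to $W=S^n$, i.e.\ to check that $\Omega C_f\to F_f$ is a $\pi_*$-isomorphism, which suffices in $\bf hSp$. The statement that $\pi_*$ takes a cofibre sequence of spectra to a long exact sequence is not formal; it is exactly the place where you must ``descend into an actual model'', as you say. In sequential spectra the cofibre is formed levelwise, and one uses the Blakers--Massey or Freudenthal theorem to see that the levelwise comparison $\Sigma F_f\to C_f$ becomes more and more connected as one goes up the spectrum, hence a stable equivalence. Once that is in hand, the rest of your argument goes through.
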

	\begin{definition}
		Let $E \in \bf hSp$ be a spectrum. 
		\begin{enumerate}[itemsep=0pt, label=$-$]
			\item A spectrum $X$ is $E$-acyclic if $E \land X \simeq 0$.
			\item A morphism $f: X \to Y$ of spectra is an $E$-equivalence if $$\id_E \land f: E \land X \to E \land Y$$ is an equivalence.
		\end{enumerate}
	\end{definition}
	Spectra represent generalized homology and cohomology theories via Brown's representability theorem. From this perspective $f$ is an $E$-equivalence if the induced map on homology
	$$E_*(f): E_*(X) \to E_*(Y)$$
	is an isomorphism.
	\begin{definition}
		Let $E \in \bf hSp$ be a spectrum. A spectrum $X$ is a $E$-local if for every $E$-equivalence $f: Y \to Z$ the map $\left[f, X\right]_*:\left[Z, X\right]_* \to \left[Y, X\right]_*$ is an isomorphism for all $*$. 
	\end{definition}	
	\begin{definition}
		Let $E, X \in \bf hSp$. Then an $E$-localization of $X$ is an $E$-equivalence $X \to L_EX$ where $L_EX$ is some $E$-local spectrum.
	\end{definition}
	\begin{fact}
		For any $E, X \in \bf hSp$ an $E$-localisation of $X$ exists.
	\end{fact}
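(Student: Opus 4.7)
The plan is to build $L_E X$ via a Bousfield-style small object argument. Working in a point-set model of spectra (say sequential spectra, as the paper resorts to), one would like to iteratively attach cells to $X$ in order to kill all maps from $E$-acyclic spectra, stopping once the result is $E$-local. The naive version fails because the class of $E$-acyclic spectra is a proper class, so the transfinite induction has no reason to terminate. The crux of the argument, and the main obstacle, is therefore to replace this proper class by a \emph{set} of generating $E$-acyclic spectra (or equivalently, generating $E$-equivalences) whose elements are sufficient to detect $E$-locality.

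\textbf{Step 1: cardinality trick.} I would fix a regular cardinal $\kappa$ larger than the cardinality of the cells of $E$, and show that every $E$-acyclic spectrum $A$ is the filtered colimit of its subspectra $A_\alpha \hookrightarrow A$ of cardinality less than $\kappa$ that are themselves $E$-acyclic. The point is that, given any map $f \colon W \to A$ with $W$ small, one can, using that $E \wedge A \simeq 0$ is witnessed by finite-cell data, enlarge the image of $f$ inside $A$ to a small $E$-acyclic subspectrum $A_\alpha$. Iterating this produces a cofinal system of small $E$-acyclic subspectra, so there is, up to equivalence, only a \emph{set} $\mathcal{G}$ of $E$-acyclic spectra to worry about.

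\textbf{Step 2: small object argument.} With $\mathcal{G}$ in hand I would define $L_E X$ by a transfinite cellular construction. Start with $X_0 := X$. Having built $X_\lambda$, consider all pairs $(A, g)$ with $A \in \mathcal{G}$ and $g \colon A \to X_\lambda$ a map, and form $X_{\lambda+1}$ as the cofibre of $\bigvee_{(A,g)} A \to X_\lambda$. At limit ordinals take the homotopy colimit. Because each $A \in \mathcal{G}$ has bounded cardinality, at some ordinal $\mu$ of cofinality greater than $\kappa$ the construction stabilises, in the sense that any map $A \to X_\mu$ from some $A \in \mathcal{G}$ factors through a stage at which it was already killed, hence is null. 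Set $L_E X := X_\mu$ and let $X \to L_E X$ be the canonical map.

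\textbf{Step 3: verification.} The map $X \to L_E X$ is an $E$-equivalence because each attaching cofibre $\bigvee A \to X_\lambda \to X_{\lambda+1}$ is so (the $A$'s are $E$-acyclic and $E \wedge (-)$ preserves cofibre sequences and filtered colimits), and the property is closed under composition and transfinite composition. Finally, $L_E X$ is $E$-local: given any $E$-equivalence $f \colon Y \to Z$, its cofibre $C_f$ is $E$-acyclic, hence by the cardinality argument a colimit of small $E$-acyclic spectra, all of which are in $\mathcal{G}$; by construction every map from such a spectrum into $L_E X$ is null, so $[C_f, L_E X]_* = 0$ and $[f, L_E X]_*$ is an isomorphism by the long exact sequence. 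This is the Bousfield localisation we sought; the delicate ingredient is the cardinality bound in Step 1, which is where all of the content of the theorem resides.
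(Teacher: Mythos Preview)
The paper does not actually prove this statement: it is listed as a background \emph{Fact} about the stable homotopy category, alongside several others, with only the blanket remark that such facts are verified by working in a model category of spectra. There is therefore no paper proof to compare against.

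Your sketch is essentially Bousfield's original argument and is correct in outline. One point in Step~3 deserves more care: from ``every map $A \to L_E X$ with $A \in \mathcal{G}$ is null'' you conclude $[C_f, L_E X]_* = 0$ for an arbitrary $E$-acyclic $C_f$ by writing $C_f$ as a filtered colimit of small $E$-acyclic subspectra. But nullity on each subspectrum in a filtered system does not formally imply nullity on the colimit, since the nullhomotopies need not be compatible. Bousfield's actual argument here is a Zorn's lemma step: given $f\colon C_f \to L_E X$, take a maximal subspectrum $C \subset C_f$ on which $f$ is null; if $C \neq C_f$ then $C_f/C$ is $E$-acyclic with at least one cell, hence contains a nontrivial member of $\mathcal{G}$, and the hypothesis lets you extend the nullhomotopy over its preimage, contradicting maximality. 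This is a small gap in the write-up rather than a flaw in the strategy, but it is precisely the place where the cardinality bound from Step~1 gets used a second time, so it is worth making explicit.
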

	The process of passing from a spectrum to an $E$-local spectrum is called the Bousfield localization of spectra. Categorically this is a localization of $\bf hSp$ at the collection $E$-equivalences.
		
	\vspace{1em}
	
	In this essay we localize only at the Moore spectrum $S\Z_{(p)}$ of $\Z_{(p)}$. In this case we also write $X_{(p)} := L_{S \Z_{(p)}}X$ and refer to it as the $p$-localization of $X$. For any spectrum $E$ we now have $E_*(X_{(p)}) = E_*(X) \otimes \Z_{(p)}$ and it follows that $X$ is contractible iff $X_{(p)}$ is contractible for every prime $p$.
	\begin{definition}
		A spectrum $X$ is contractible if $\pi_*(X) = 0$.
	\end{definition}
	\begin{definition} Let $N \in \Z$. A spectrum $X$ is
		\begin{enumerate}[itemsep=0pt, label=$-$]
			\item $N$-connected if $\pi_d(X)=0$ for all $d \leq N$,
			\item connective if it is $N$-connected for some $N$.
		\end{enumerate}
	\end{definition}
	\begin{remark}
		Some authors define connective to mean $(-1)$-connected and there is no widespread agreement about which definition to use.
	\end{remark}
	\begin{definition}
		A ring spectrum is a ring object  $(R, \eta, m)$ in $\bf hSp$.
	\end{definition}
	Here $\eta: S \to R$ is a unit and $m: R \land R \to R$ is a multiplication map. The triple $(R, \eta, m)$ will usually be shortened to just $R$.
	\begin{definition}
		Let $R$ be a ring spectrum and $\alpha \in \pi_d(R)$. There is an induced map
		$$ \Sigma^d R \simeq S^d \land R \xrightarrow{\alpha \land \id} R \land R \xrightarrow{m} R  $$
		which we also denote by $\alpha$. Then the telescope $\alpha^{-1} R$ is the homotopy colimit of
		$$ R \xrightarrow{\alpha} \Sigma^{-d}R \xrightarrow{\alpha} \Sigma^{-2d} R \to \cdots.$$
	\end{definition}
	\begin{remark}
		 By a homotopy colimit we mean the following: the diagram above can be lifted to a sequence of cofibrations between cofibrant objects in some model category of spectra. The image in $\bf hSp$ of the categorical colimit of this lift is independent of the lift and called the homotopy colimit of the diagram. Despite the name, the homotopy colimit is \emph{not} the categorical colimit in $\bf hSp$.
	\end{remark}
	\begin{remark}
		Smash product commutes with homotopy colimits. Taking homotopy groups commutes with filtered homotopy colimits. Therefore both of these constructions commute with taking the telescopes.
	\end{remark}
	
	\begin{definition}
		Spectra $E, F \in \bf hSp$ are Bousfield equivalent if for every spectrum $X$ we have that $E \land X \simeq *$ iff $F \land X \simeq *$.
		
		\vspace{0.2 cm}
		
		\noindent The Bousfield equivalence class of $E$ is denoted by $\langle E \rangle$. We write $\langle E \rangle \geq \langle F \rangle$ if for each spectrum $X$ we have that $E \land X \simeq *$ implies $F \land X\simeq *$. We further define $\langle E \rangle \land \langle F \rangle = \langle E \land F \rangle$ and $\langle E \rangle \vee \langle F \rangle = \langle E \vee F \rangle$.
	\end{definition}
	
	\begin{definition}
		A spectrum $X \in \bf hSp$ is of finite type if $\pi_d(X)$ is finitely generated for each $d$. It is finite if it is equivalent to $\Sigma^{-N}\Sigma^\infty Y$ for some $N \in \N_0$ and some finite based CW complex $Y$.
	\end{definition}
	
	We now briefly discuss the Spanier-Whitehead duality. The geometric idea is that a space $X$ can be considered as dual to its complement in $S^N$ for a large $N$. This is formalized in the language of spectra with the following theorem.
	\begin{theorem}[Spanier-Whitehead Duality]
		For any finite spectrum $X$, there is a finite spectrum $DX$ such that
		\begin{enumerate}[itemsep=0pt, label=$-$]
			\item for any spectrum $Y$, there is an isomorphism of graded abelian groups $\left[X, Y\right]_* \to \pi_*(DX \land Y)$ that is natural in both $X$ and $Y$.
			
			The maps $S^n \land X \to Y$ and $S^n \to DX \land Y$ corresponding under this isomorphism are called adjoint.
			
			\item $D(X \land Y) = DX \land DY$.
			\item $DDX \simeq X$ and $\left[X, Y\right]_* \cong \left[DY, DX\right]_*$.
			\item $X \mapsto DX$ is a contravariant functor.
		\end{enumerate}
	\end{theorem}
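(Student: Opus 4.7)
The strategy is induction on the number of cells of $X$, constructing $DX$ together with the natural isomorphism of (1) and the functoriality of (4) simultaneously. For the base case $X = S^n$, we set $DS^n := S^{-n}$; the isomorphism $[S^n, Y]_* \cong \pi_*(S^{-n} \land Y)$ is immediate because both sides equal $\pi_{*+n}(Y)$, using that $\Sigma$ and $\Omega$ are inverse equivalences of $\bf hSp$. Functoriality on sphere spectra is then essentially tautological.

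For the inductive step, write $X$ as the cofiber of an attaching map $f : S^{m-1} \to X'$, where $X'$ has fewer cells. By induction $DX'$ exists and the (inductively available) functoriality of $D$ yields $Df : DX' \to DS^{m-1} = S^{-m+1}$. Define $DX$ as the fiber of $Df$; since fiber and cofiber sequences agree in $\bf hSp$, this fits into a cofiber sequence $DX \to DX' \to S^{-m+1}$ of finite spectra. Smashing each of $S^{m-1} \to X' \to X$ and $DX \to DX' \to S^{-m+1}$ with $Y$, then applying $[-, Y]_*$ and $\pi_*(-)$ respectively, produces a pair of long exact sequences linked by the inductive natural isomorphisms at $X'$ and $S^{m-1}$; the five lemma then supplies the desired isomorphism for $X$.

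The principal obstacle is the coherent bootstrapping of (1) and (4): the induction can only proceed if $Df$ is genuinely functorial and the resulting $DX$ is independent of the chosen cell decomposition. Functoriality for a general map $g : X \to Z$ between finite spectra is extracted from (1) by a Yoneda-style argument, since (1) identifies $DX$ as corepresenting the functor $Y \mapsto [X, Y]_*$ in a smash-product sense; the natural transformation induced by $g$ then corresponds to $Dg : DZ \to DX$. Independence of the cell decomposition follows because any two finite cell structures on $X$ admit a common refinement, and naturality forces the candidate duals to be canonically equivalent.

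Properties (2) and (3) follow by further inductions once (1) and (4) are in place. For $D(X \land Y) \simeq DX \land DY$ (with $Y$ fixed) the sphere case is immediate, and a cofiber sequence in $X$ produces compatible cofiber sequences on both sides. For $DDX \simeq X$ one defines a natural double-dual map $X \to DDX$ using the counit $\epsilon : DX \land X \to S$, obtained as the preimage of $\id_X$ under $[X, X]_0 \cong \pi_0(DX \land X)$ from (1); this map is an equivalence on spheres and extends by the five lemma. The second clause of (3) then follows by applying (1) to the pair $(DY, Y)$ together with $DDY \simeq Y$.
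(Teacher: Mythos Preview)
The paper does not actually prove this theorem; Spanier--Whitehead duality is stated in Section~2.2 as a background fact and used as a black box, with only the remark that it is used ``to replace maps $\Sigma^n X \to Y$ with maps with domain $S^n$''. So there is no paper proof to compare against, and your cell-induction approach is the standard one found in the literature.

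Your outline is broadly correct, but one step is genuinely incomplete. In the inductive step you write that ``the five lemma then supplies the desired isomorphism for $X$''. The five lemma does not \emph{produce} a map: it only verifies that an already-given map between long exact sequences is an isomorphism. You must first construct a natural comparison map $[X,Y]_* \to \pi_*(DX \land Y)$ --- the cleanest way is to build the evaluation $\epsilon_X : DX \land X \to S$ (or dually the coevaluation $S \to X \land DX$) inductively along the cofiber sequence $S^{m-1} \to X' \to X$, and then use it to define the comparison transformation explicitly; only then can you invoke the five lemma. This is more than cosmetic, because without a specified map you cannot check naturality, and naturality is precisely what drives the rest of your argument (functoriality, well-definedness, $DDX \simeq X$).

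A smaller point: your argument for independence of the cell decomposition via ``common refinement'' is not correct as stated --- two CW structures on a space need not admit a common CW refinement. The right argument is the Yoneda-style one you already sketched: once (1) is known, $DX$ corepresents the functor $Y \mapsto [X,Y]_*$ and is therefore determined up to canonical equivalence by $X$ alone.
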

	In this essay we mostly use the duality to replace maps $\Sigma^n X \to Y$ with maps with domain $S^n$ and thus simplify the setting while retaining all essential information.
	
	\section{James construction}
	In this section, we define the James construction and the James-Hopf maps. The James construction lets us understand the homotopy type of the spaces $\Omega \Sigma X$ geometrically and plays an important role in algebraic topology beyond this proof.
	
	\vspace{1em}
	
	
	The James construction $JX$ is a free topological monoid on a based space $(X, *)$. Formally, we have the following definition.
	
	\begin{definition}
		Let $(X, *)$ be a based space. The James construction is the space $JX= \bigsqcup_{j=0}^\infty X^j / \sim$ where $\sim$ is the equivalence relation generated by
		$$ (x_1, \dots, x_{i-1}, *, x_i, \dots, x_j) \sim (x_1, \dots, x_i, x_{i+1}, \dots, x_j)$$
		for each $i$ and $j$.
		
		The $k$-th stage of the James construction on $X$ is the space $J_kX= \bigsqcup_{j=0}^k X^j / \sim$ where $\sim$ is the restriction of the above equivalence relation.
	\end{definition}
	The space $JX$ is a monoid in which multiplication is given by the concatenation of words and whose identity is the basepoint $*$. The $k$-th stage of the James construction $J_kX$ is a subspace containing all words of length at most $k$. The importance of the James construction stems from the following result.
	\begin{theorem}\label{James}
		If $X$ is a connected $CW$ complex, then
		\begin{itemize}[label=$-$]
			\item $JX \simeq \Omega \Sigma X$ and
			\item $\Sigma J_kX \simeq \bigvee_{j=0}^k \Sigma X^{\land j}$ and $\Sigma JX \simeq \bigvee_{j=0}^\infty \Sigma X^{\land j}$.
		\end{itemize}
	\end{theorem}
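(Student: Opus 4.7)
The plan is to establish $JX \simeq \Omega \Sigma X$ first by building a monoid comparison map and checking it is a homology equivalence, and then to deduce both suspension splittings by exploiting the filtration $J_0X \subset J_1X \subset \cdots$ together with the James-Hopf maps. The key geometric input is that the quotient $J_kX / J_{k-1}X$ is naturally homeomorphic to the smash power $X^{\land k}$, which turns the filtration into a sequence of cofibre sequences whose successive cofibres are exactly the smash powers appearing in the claimed splittings.

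For the first claim, the $(\Sigma, \Omega)$-adjunction in ${\bf Top}_*$ provides a natural based map $\iota: X \to \Omega \Sigma X$. Since $\Omega \Sigma X$ is a topological monoid under loop concatenation and $JX$ is the free topological monoid on the based space $X$, the universal property yields a monoid map $\varphi: JX \to \Omega \Sigma X$ extending $\iota$. Both $JX$ and $\Omega\Sigma X$ are connected H-spaces with the homotopy type of CW complexes, hence simple, so to conclude that $\varphi$ is a weak equivalence it suffices to show that it induces an isomorphism on homology with field coefficients. On the target, apply the Serre spectral sequence to the path-loop fibration $\Omega \Sigma X \to P \Sigma X \to \Sigma X$: since $\Sigma X$ is simply connected and the total space is contractible, a multiplicative argument using the Pontryagin product on the loop space identifies $H_*(\Omega \Sigma X; k)$ with the tensor algebra $T(\tilde H_*(X; k))$. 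On the source, the inclusion $J_{k-1}X \hookrightarrow J_kX$ is a cofibration with cofibre $X^{\land k}$ because the relations defining $JX$ collapse precisely the fat wedge inside $X^k$; passing to homology inductively and then to the colimit produces the same tensor algebra, and $\varphi$ visibly carries generators to generators.

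For the suspension splittings, the cofibrations above give cofibre sequences $\Sigma J_{k-1} X \to \Sigma J_k X \to \Sigma X^{\land k}$, and I would split them using the \emph{James-Hopf maps} $h_j: JX \to J(X^{\land j})$, defined combinatorially on reduced words by $h_j(x_1 \cdots x_n) = \prod_{i_1 < \cdots < i_j} (x_{i_1} \land \cdots \land x_{i_j})$, with the product taken in the monoid $J(X^{\land j})$. Composing $\Sigma h_j$ with the projection $\Sigma J(X^{\land j}) \to \Sigma X^{\land j}$ onto the word-length-one piece produces a retraction of $\Sigma X^{\land j} \to \Sigma J_j X$, and induction on $k$ then yields $\Sigma J_k X \simeq \bigvee_{j=0}^k \Sigma X^{\land j}$. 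Since reduced suspension and wedges commute with the sequential colimit along the cofibrations $J_k X \hookrightarrow J_{k+1} X$, the infinite splitting follows by passing to the colimit.

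The hardest step I expect is the homology calculation for $\Omega \Sigma X$: the Serre spectral sequence for the path-loop fibration does not collapse for purely additive reasons, and one must feed in the Pontryagin product on $H_*(\Omega \Sigma X)$ to propagate classes through the transgressions and force the spectral sequence to compute the tensor algebra. Once this identification is in hand, the remaining ingredients — verifying the cofibration $J_{k-1}X \hookrightarrow J_kX$ and the identification of its cofibre with $X^{\land k}$, and setting up compatible James-Hopf retractions — are essentially point-set topological and combinatorial bookkeeping, with no further conceptual obstruction.
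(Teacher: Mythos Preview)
The paper does not prove this theorem: it is stated as background in Section~2.3 with no argument, and indeed the paper's own Definition~2.4 of the James--Hopf map \emph{presupposes} the splitting rather than deriving it. So there is nothing in the paper to compare against; your outline is essentially the classical proof (James's original argument, as presented for instance in Whitehead's \emph{Elements of Homotopy Theory} or Hatcher~\S4.J).

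Two points are worth tightening. First, $\Omega\Sigma X$ with ordinary loop concatenation is only homotopy-associative, not a strict monoid, so the free-monoid universal property of $JX$ does not literally hand you $\varphi$; the standard fix is to pass to the Moore loop space, or to write $\varphi$ down explicitly on words as $x_1\cdots x_n \mapsto \iota(x_1)\cdot\ldots\cdot\iota(x_n)$ with a fixed reparametrisation. Second, the phrase ``a retraction of $\Sigma X^{\land j} \to \Sigma J_j X$'' is garbled, since there is no natural map in that direction to retract. What your combinatorial $h_j$ actually provides is a map $\Sigma J_kX \to \Sigma X^{\land j}$ for each $j\le k$; assembling these via the co-$H$-structure on the suspension yields $\Sigma J_kX \to \bigvee_{j\le k}\Sigma X^{\land j}$, and one checks this is a homology isomorphism (the filtration by word length on the source matches the wedge decomposition on the target). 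The inductive splitting of the cofibre sequences can be made to work, but it is a little more delicate than your sketch suggests, and the global homology comparison is the cleaner route.
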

	Here and elsewhere in the essay, an expression of the form $(\cdot)^{\land j}$ refers to the $j$-fold smash product of spaces, maps or spectra.
	
	\vspace{1em}
	
	The spaces $J_kX$ form a filtration of the space $X$. We have $J_k X / J_{k-1} X \simeq X^{\land k}$ as is easily seen from the definition of the equivalence relation $\sim$.
	
	\vspace{1 em}
	
	In this essay, the Theorem \ref{James} is used primarily in the case $X=S^{2m}$ for studying the space $\Omega S^{2m+1}$. The homology of this loop space can be obtained by a standard calculation with the Serre spectral sequence, but this argument sheds no light on the geometric structure of $\Omega S^{2m+1}$. The James construction equips its homotopy type with the structure of a CW complex with one cell in each dimension divisible by $2m$.
	
	\vspace{1 em}
	
	The James-Hopf maps generalize the Hopf invariant.
	\begin{definition}\label{JamesHopf}
		Let $k \in \N$. Consider the James splitting map composed with the projection
		$$ \Sigma J X \xrightarrow{\simeq} \Sigma \bigvee_{j=0}^\infty X^{\land j} \to \Sigma X^{\land k}.$$
		The functors $\Sigma$ and $\Omega$ are an adjoint pair and we define the adjoint map
		$$ JX \to \Omega \Sigma X^{\land k} $$
		to be the James-Hopf map.
	\end{definition}
	Fixing the coefficients in a field $F$, the Künneth theorem yields an isomorphism $H_*(Y \times Y; F) \cong H_*(Y; F) \otimes H_*(Y; F)$ and thus equips homology $H_*(Y;F)$ of any space $Y$ with the coalgebra structure induced by the diagonal map $Y \to Y \times Y$. Then $H_*(\Omega \Sigma X ; F) \to H_*(\Omega \Sigma X^{\land k};F)$ becomes a map of coalgebras. When $X$ is a sphere, this map can be explicitly calculated in terms of the generators \cite[see Lecture $4$, §3]{akhil2012spectra}. We shall need to know is that if $X=S^{2m}$ and $F = \F_p$, the map is surjective.
	
	\section{Snaith's splitting}
	Snaith's result translates the Theorem \ref{James} to the stable homotopy category.
	\begin{theorem}[Snaith's splitting]\label{Snaith}
		Let $n \in \N$. For any based CW complex $X$ there is a splitting
		$$ \Sigma^\infty \Omega^n \Sigma^n X \simeq \bigvee_{k=0}^\infty D_k$$
		where $D_k$ are some finite spectra.
	\end{theorem}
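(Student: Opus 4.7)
The plan is to generalise the James splitting from Theorem~\ref{James} to iterated loop spaces using a geometric model of $\Omega^n \Sigma^n X$. By May's approximation theorem, when $X$ is connected $\Omega^n \Sigma^n X$ has the homotopy type of the free little $n$-cubes algebra $C_n(X) = \bigsqcup_{k \geq 0} \mathcal{C}_n(k) \times_{\Sigma_k} X^k / \sim$, where $\mathcal{C}_n(k)$ is the configuration space of $k$ disjoint little $n$-cubes in the unit cube and $\sim$ discards cubes labelled by the basepoint. This space carries a natural filtration $F_k C_n(X)$ by the number of labelled points, with subquotients
\[
F_k C_n(X) / F_{k-1} C_n(X) \cong \mathcal{C}_n(k)_+ \wedge_{\Sigma_k} X^{\wedge k}.
\]
I would set $D_k := \Sigma^\infty \bigl( \mathcal{C}_n(k)_+ \wedge_{\Sigma_k} X^{\wedge k} \bigr)$; these are finite spectra because each $\mathcal{C}_n(k)$ has the homotopy type of a finite CW complex. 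As a sanity check, in the case $n=1$ the space $\mathcal{C}_1(k)$ is a disjoint union of contractible components permuted freely by $\Sigma_k$, so $D_k \simeq \Sigma^\infty X^{\wedge k}$ and we recover Theorem~\ref{James}.

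Next I would construct stable James-Hopf maps $h_k : \Sigma^\infty C_n(X) \to D_k$ generalising Definition~\ref{JamesHopf}. The property I need is that the restriction of $h_k$ to $\Sigma^\infty F_k C_n(X)$ agrees, up to stable homotopy, with the collapse $F_k \twoheadrightarrow F_k / F_{k-1}$, while the restriction to $\Sigma^\infty F_{k-1}$ is null. Geometrically $h_k$ should send a configuration of $j \geq k$ labelled cubes to the stable sum over its $\binom{j}{k}$ size-$k$ sub-configurations; the stable sum is legitimate because after stabilisation the $\Sigma_j$-orbit of an unordered configuration splits as a wedge indexed by the sub-configurations, in direct analogy with how the James-Hopf map of Definition~\ref{JamesHopf} was produced from the stable splitting of $\Sigma JX$ in Theorem~\ref{James}.

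Assembling the $h_k$ yields a candidate equivalence
\[
h : \Sigma^\infty \Omega^n \Sigma^n X \simeq \Sigma^\infty C_n(X) \longrightarrow \bigvee_{k=0}^\infty D_k,
\]
and the wedge, rather than the product, is legitimate because the connectivity of $D_k$ grows with $k$ when $X$ is connected, so only finitely many summands contribute in any given degree. To conclude that $h$ is an equivalence I would verify it is a homology isomorphism: the filtration $\Sigma^\infty F_\bullet C_n(X)$ yields a spectral sequence converging to $H_*(\Omega^n \Sigma^n X)$ whose $E^1$ page is $\bigoplus_k H_*(D_k)$, and by construction $h$ induces the identity on this $E^1$ page, forcing an isomorphism on the abutment. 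The hard part of the argument will be the construction of the maps $h_k$: summing over sub-configurations is not continuous before stabilisation, and one must use in an essential way the stable splittings of the $\Sigma_k$-orbit projections coming from the combinatorial structure of $\mathcal{C}_n(k)$ to make the definition cohere across all configuration sizes simultaneously. A secondary technical point is relaxing the connectivity hypothesis on $X$, which can be handled by a group completion argument or by observing that the splitting statement is stable under suspension and $X$ may be replaced by $\Sigma^N X$ for large $N$.
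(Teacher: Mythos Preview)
The paper does not actually prove Theorem~\ref{Snaith}; it is stated as background and the surrounding text explicitly says ``there are also concrete models for the homotopy types of spaces $\Omega^n \Sigma^n X$ for $n \geq 2$ using the theory of operads, but this is not discussed further in this essay.'' So there is no proof in the paper to compare against.

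Your outline is the standard route (May's little cubes approximation, the filtration by number of points, stable James--Hopf maps splitting the filtration) and is essentially correct as a sketch. Two remarks. First, your claim that the $D_k$ are finite spectra needs $X$ itself to be a finite CW complex, not just $\mathcal{C}_n(k)$; the theorem as stated in the paper says ``any based CW complex $X$'' and then asserts the $D_k$ are finite, which is already imprecise, and the paper only ever invokes the result for $X$ a sphere. You should flag this rather than silently inherit the imprecision. Second, you correctly identify the construction of the $h_k$ as the genuine content, but ``stable sum over sub-configurations'' is doing a lot of work: making this precise requires either Snaith's original transfer argument or the Cohen--May--Taylor combinatorial machinery, and your spectral sequence argument at the end presupposes that $h$ respects the filtration in a way that is exactly what remains to be checked once the $h_k$ exist. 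As a plan it is sound; as a proof it is still a plan.
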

 	The James construction $JX$ can be thought of as the (unstable!) case $n=1$. There are also concrete models for the homotopy types of spaces $\Omega^n \Sigma^n X$ for $n \geq 2$ using the theory of operads, but this is not discussed further in this essay.	
	
	\section{Thom spectra}
	The nilpotence theorem is a statement about $MU$ detecting nilpotence. The spectrum $MU$ is the spectrum associated to the generalized cohomology theory complex cobordism via Brown's representability theorem. In this section we provide an alternative construction of $MU$, define other Thom spectra featuring in the essay and establish some of their properties.
	
	\vspace{1em}
	
	Let $p: E \to X$ be a complex vector bundle with an inner product $\langle \cdot, \cdot \rangle$. Recall that any vector bundle over a paracompact Hausdorff space admits an inner product and all spaces we consider have these properties. Define the disc bundle
	$$ D_X(E)=\{  e \in E \ | \ \langle e, e \rangle \leq 1  \} $$
	and the sphere bundle
	$$ S_X(E)=\{  e \in E \ | \ \langle e, e \rangle = 1 \}. $$
	The Thom space of $E$ is
	$$ \Th_X(E)=D_X(E)/S_X(E). $$
	One can also consider a slightly more general construction. For a pair of spaces $(X, A)$ define the relative Thom space of $E$ as the cofibre
	$$ \Th_{X/A}(E) = D_X(E)/(S_X(E) \cup D_A(E)).$$
	We can also Thomify maps.
	\begin{definition}
		Let $f: Y \to X$ be a map of spaces. It defines a pullback bundle $f^*E$ over $Y$ and we have the commutative diagram
		$$\begin{tikzcd}
		f^*E \arrow[r] \arrow[d]& E \arrow[d, "p"]\\
		Y \arrow[r, "f"] & X.
		\end{tikzcd}$$
		The induced map $\Th(f): \Th_Y(f^*E) \to \Th_X(E)$ is the Thomification of $f$.
	\end{definition}

	What we are really interested in is the notion of the Thom spectrum - the stable analogue of the Thom space. We shall define the Thom spectrum of a map $f$ as a sequence of Thom spaces and structure maps associated to certain bundles related to $f$. Let us introduce these bundles.
	
	\vspace{1em}
	
	Let $G$ be a topological group. A classifying space $BG$ is a space with the property that for any CW complex $Y$ there is a bijection
	$$ 
	\begin{tikzcd}
	\left[Y, BG\right] \arrow[r, leftrightarrow]& \{ \text{principal $G$-bundles over $Y$} \}
	\end{tikzcd}
	$$
	between the set of homotopy classes of maps $Y \to BG$ and the set of isomorphism classes of principal $G$-bundles over $Y$.
	
	The question of existence of $BG$ can profitably be rephrased as the question whether the functor $\bf hTop \to Set$ given  by $Y \mapsto \{ \text{principal $G$-bundles over $Y$} \}$ is representable. This follows from the Brown's representability theorem. The space $BG$ is unique in $\bf hTop$ by the Yoneda lemma so by CW approximation $BG$ is unique in $\bf Top$ up to a weak homotopy equivalence.
	
	\vspace{1em}
	
	To define Thom spectra consider the classifying space of the unitary group $U(k)$, in which case the abstract machinery can be replaced by an explicit model. We have that $BU(k) = \Gr_k(\C^\infty)$ is the infinite Grassmannian.
	
	\begin{theorem}
		Let $\gamma_k(\C^\infty) \to \Gr_k(\C^\infty)$ be the tautological $k$-dimensional complex vector bundle. For any CW complex $Y$ there is a bijection
		\begin{align*}
			\left[Y, \Gr_k(\C^\infty) \right] & \to \{ \text{$k$-dimensional complex vector bundles over $Y$} \}\\
			f &\mapsto f^*\gamma_k(\C^\infty).
		\end{align*}
		Hence $BU(k)=\Gr_k(\C^\infty)$. 
	\end{theorem}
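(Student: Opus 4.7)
The plan is to verify the bijection by showing (i) the pullback map is well-defined on homotopy classes, (ii) it is surjective, and (iii) it is injective. The classifying space statement $BU(k) = \Gr_k(\C^\infty)$ then follows because principal $U(k)$-bundles correspond to $k$-dimensional complex vector bundles via the frame bundle construction.

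For well-definedness (homotopy invariance of pullbacks), I would prove the following standard lemma: if $f_0, f_1: Y \to \Gr_k(\C^\infty)$ are homotopic via $H: Y \times I \to \Gr_k(\C^\infty)$, then $f_0^*\gamma_k(\C^\infty) \cong f_1^*\gamma_k(\C^\infty)$. This follows because the pullback $H^*\gamma_k(\C^\infty)$ over $Y \times I$ is isomorphic to $p^*(f_0^*\gamma_k(\C^\infty))$, where $p: Y\times I \to Y$ is the projection; this uses that any vector bundle over $Y\times I$ is pulled back from $Y$, a standard result proved by trivializing over slabs of $Y \times I$ using paracompactness of $Y$ (a CW complex is paracompact Hausdorff).

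For surjectivity, given a $k$-dimensional complex vector bundle $E \to Y$, the goal is to construct a \emph{Gauss map}, that is, a fibrewise $\C$-linear injection $\varphi: E \to \C^\infty$. Choosing a countable open cover $\{U_n\}$ over which $E$ trivializes (possible because $Y$ is paracompact; in fact, one may even take a countable cover when $Y$ has finite covering dimension, but the general case works via a standard refinement), together with a subordinate partition of unity $\{\rho_n\}$, we assemble local trivializations into a global fibrewise injection into $\bigoplus_n \C^k \cong \C^\infty$. Sending $y \in Y$ to the image subspace $\varphi(E_y) \subset \C^\infty$ produces the desired classifying map $Y \to \Gr_k(\C^\infty)$ whose pullback of $\gamma_k(\C^\infty)$ is canonically isomorphic to $E$.

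Injectivity, the step I expect to be the main obstacle, proceeds as follows. Suppose $f_0, f_1: Y \to \Gr_k(\C^\infty)$ satisfy $f_0^*\gamma_k(\C^\infty) \cong f_1^*\gamma_k(\C^\infty) =: E$. The idea is to exhibit both $f_0$ and $f_1$ as homotopic to a common map by using the two linear isometries $\alpha, \beta: \C^\infty \to \C^\infty$ sending the standard basis to the even and odd coordinates respectively. Both $\alpha$ and $\beta$ are homotopic to $\id_{\C^\infty}$ through linear isometries (the standard rotation-in-each-coordinate-pair homotopy, which is continuous because $\C^\infty$ carries the colimit topology), so composing with $\alpha, \beta$ defines homotopies $f_0 \simeq \alpha \circ f_0$ and $f_1 \simeq \beta \circ f_1$. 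Then the linear path $t \mapsto (1-t)\cdot(\alpha \circ f_0) + t\cdot(\beta \circ f_1)$ at the level of Gauss maps is a fibrewise injection at every $t$ because the images land in complementary subspaces, and hence defines a homotopy $\alpha \circ f_0 \simeq \beta \circ f_1$ in $\Gr_k(\C^\infty)$. Concatenating the three homotopies yields $f_0 \simeq f_1$, completing the proof.
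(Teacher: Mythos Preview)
The paper does not actually prove this theorem; it is stated as a standard background fact and the text moves immediately on to defining Thom spectra. So there is no proof in the paper to compare against.

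Your outline is the standard Milnor--Stasheff argument and is essentially correct. One small slip: in the injectivity step you say $\alpha$ and $\beta$ are homotopic to $\id_{\C^\infty}$ through linear \emph{isometries}. What the argument actually needs, and what the straight-line or coordinate-shift homotopy actually gives, is a homotopy through linear \emph{injections}; that suffices to induce a well-defined path in $\Gr_k(\C^\infty)$. Also, in the surjectivity step you should make explicit that a paracompact space always admits a \emph{countable} trivializing cover for any vector bundle (the standard trick of grouping a locally finite cover by the index set of a partition of unity), since you need the target to be $\C^\infty$ rather than an uncountable direct sum. With those two clarifications the proof is complete.
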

	
	With the existence of $\gamma_k(\C^\infty)$ we can finally define Thom spectra. We first give a convenient models for the spaces $BU$ and $BU(k)$. Let $BU$ be the infinite mapping telescope of
	$$ \Gr_1(\C^\infty) \hookrightarrow \Gr_2(\C^\infty) \hookrightarrow \cdots$$
	where the inclusions are induced by the maps $\Gr_k(\C^m) \to \Gr_{k+1}(\C^{m+1})$ given by $V \mapsto \textrm{span}\{V, e_{m+1}\}$ for $e_{m+1} \notin \C^m$. Similarly let $BU^{(k)}$ denote the finite mapping telescope of
	$$ \Gr_1(\C^\infty) \hookrightarrow \Gr_2(\C^\infty) \hookrightarrow \dots \hookrightarrow \Gr_k(\C^\infty)$$
	which is homotopy equivalent to $BU(k) = \Gr_k(\C^\infty)$ by collapsing the telescope to its right-hand end. Pulling back $\gamma_k(\C^\infty)$ along this map gives a bundle $V_k$ and by restricting further along the inclusion $BU^{(k-1)} \hookrightarrow BU^{(k)}$ we obtain a commutative cube
	$$
	\begin{tikzcd}[row sep=scriptsize, column sep=scriptsize]
	& V_{k-1} \oplus \C \arrow[dl, "\simeq"] \arrow[rr] \arrow[dd] & & V_k \arrow[dl, "\simeq"] \arrow[dd] \\
	\gamma_{k-1}(\C^\infty) \oplus \C \arrow[rr, crossing over] \arrow[dd] & & \gamma_k(\C^\infty) \\
	& BU^{(k-1)} \arrow[dl, "\simeq"] \arrow[hook, rr] & &  BU^{(k)} \arrow[dl, "\simeq"] \\
	\Gr_{k-1}(\C^\infty) \arrow[rr, hook] & & \Gr_k(\C^\infty). \arrow[from=uu, crossing over]
	\end{tikzcd}
	$$
	To see that the restrictions of $\gamma_k(\C^\infty)$ and of $V_k$ are indeed as in the diagram, these pullbacks can be computed manually. For example, the pullback of $\gamma_k(\C^\infty)$ is given by
	$$\{ (V, x, U) \in \Gr_{k-1}(\C^\infty) \times \gamma_k(\C^\infty) \ | \ \mathrm{span}\{ V, e_{m+1} \} = U \text{ where } U \leq \C^m\}.$$
	Note that $x \in U$ can be uniquely expressed as $x=x'+x''$ where $x' \in V$ and $x'' \in \mathrm{span}\{ e_{m+1} \}$. Thence the bundle is isomorphic to
	$$\{ (V,x',x'') \in \gamma_{k-1}(\C^\infty) \times \mathrm{span}\{ e_{m+1} \} \} \cong \gamma_{k-1}(\C^\infty) \oplus \C$$
	as required.
	
	
	\vspace{1em}
	
	Let $f:Y \to BU$ be a map. For any $k \in \N$ we define the preimages $Y^{(k)} := f^{-1}(BU^{(k)})$. Restricting $f$ to these subspaces yields maps into $BU^{(k)}$ and the pullback square can be extended to
	$$\begin{tikzcd}
	f^*(V_k) \arrow[r] \arrow[d]&V_k \arrow[r] \arrow[d]& \gamma_k(\C^\infty) \arrow[d]\\
	Y^{(k)} \arrow[r, "f"]&BU^{(k)} \arrow[r] & \Gr_k(\C^\infty).
	\end{tikzcd}$$
	Restricting to $\Gr_{k-1}(\C^\infty)$ extends the commutative cube above to the commutative cuboid with $f^*(V_k) \cong f^*(V_{k-1}) \oplus \C$.
	\begin{definition}
		The Thom spectrum of the map $f: Y \to BU$ is denoted by $Y^f$ and has spaces
		\begin{align*}
		Y^f_{2k} &= \Th_{Y^{(k)}}(f^* (V_k))\\
		Y^f_{2k+1} &= \Sigma Y^f_{2k}
		\end{align*}
		and the structure maps $\Sigma Y^f_{2k} \xrightarrow{\id} Y^f_{2k+1}$ and 
		$$\Sigma Y^f_{2k+1} = \Sigma^2 Y^f_{2k} = \Sigma^2 \Th_{Y^{(k)}} (f^*(V_k)) \xrightarrow{\cong} \Th_{Y^{(k+1)}}(f^*(V_{k+1})) = Y^f_{2k+2}.$$	
	\end{definition}
	This construction carries over to the relative version for the pair of spaces. Note that any complex vector bundle is orientable as a real vector bundle. The main tool for calculation of homology groups of Thom spectra is the Thom isomorphism theorem.
	\begin{theorem}[Thom isomorphism theorem]
		Let $p: E \to X$ be a complex vector bundle of complex rank $k$. There is a Thom class $u \in H^{2k} (X; \Z)$ such that taking the cap product with $u$
		$$ \widetilde{H}_{i+2k} (\Th_X(E); \Z) \to H_i(X;\Z)$$
		is an isomorphism.
	\end{theorem}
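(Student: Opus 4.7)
The plan is to construct the Thom class $u$ locally using the canonical orientation carried by any complex vector bundle, patch these local classes into a global class on $\Th_X(E)$, and then verify the cap-product isomorphism by a Mayer--Vietoris reduction to the trivial-bundle case. (A small correction: as stated, the class $u$ should live in $\widetilde{H}^{2k}(\Th_X(E);\Z)$ rather than in $H^{2k}(X;\Z)$; this is clearly the intended statement since we must cap a class in $\widetilde{H}_{i+2k}(\Th_X(E);\Z)$ against it.)

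First I would treat the trivial bundle $E = X \times \C^k$. Here $\Th_X(E) \cong X_+ \wedge S^{2k}$, and pulling back a fixed generator of $\widetilde{H}^{2k}(S^{2k};\Z)\cong\Z$ along the evident projection $X_+ \wedge S^{2k} \to S^{2k}$ gives a canonical class $u_{\mathrm{triv}}$. Cap product with $u_{\mathrm{triv}}$, combined with the homotopy equivalence $D_X(E)\simeq X$ given by the zero section, is then exactly the suspension isomorphism $\widetilde{H}_{i+2k}(X_+ \wedge S^{2k};\Z) \cong H_i(X;\Z)$, which is classical. Next, for a general $E\to X$ with $X$ a finite CW complex, I would pick a good cover $\{U_\alpha\}$ on which $E$ trivialises. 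On each $U_\alpha$ the previous paragraph furnishes a local Thom class $u_\alpha$; on an overlap $U_\alpha\cap U_\beta$ the two restrictions differ by the action on $\widetilde{H}^{2k}(S^{2k};\Z)\cong\Z$ of the transition function, which lies in $U(k)$. Since $U(k)$ is connected and acts on each fibre by orientation-preserving real linear maps, this action is trivial, so the $u_\alpha$ patch coherently into a global class $u\in\widetilde{H}^{2k}(\Th_X(E);\Z)$. A standard Mayer--Vietoris comparison (for the pairs $(D(E),S(E))$ over $U_\alpha,U_\beta,U_\alpha\cap U_\beta$ versus over $U_\alpha\cup U_\beta$), together with the five lemma and the naturality of cap product under open inclusions, upgrades the Thom isomorphism from each $U_\alpha$ to all of $X$ by induction on the number of charts.

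The extension to the infinite-dimensional bases actually appearing later in the essay (the Grassmannians $\Gr_k(\C^\infty)$ and $BU$) is handled by writing these as colimits of finite CW complexes $\Gr_k(\C^n)$ and using that both $H_*(X;\Z)$ and $\widetilde{H}_*(\Th_X(E);\Z)$ are continuous under such filtrations -- the latter because Thomification commutes with sequential colimits and each Thom space is the associated graded of a CW filtration on $\Th_X(E)$. The main obstacle is the patching step: the fact that the local classes $u_\alpha$ glue is exactly where we use that $E$ is \emph{complex} rather than merely real of rank $2k$. For a general real bundle the obstruction to gluing lives in $H^1(X;\Z/2)$ and is precisely the first Stiefel--Whitney class; the connectedness of $U(k)$ (equivalently, the canonical complex orientation) makes this obstruction vanish and gives the distinguished Thom class with integral coefficients.
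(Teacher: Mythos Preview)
Your proposal is a correct and standard argument for the Thom isomorphism theorem, and you are right to flag that the Thom class must live in $\widetilde{H}^{2k}(\Th_X(E);\Z)$ rather than in $H^{2k}(X;\Z)$ for the cap product to even be defined. However, there is nothing to compare against: the paper states this theorem as background material and offers no proof at all, so your argument is strictly additional content rather than an alternative to anything in the text.

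If anything, your write-up goes beyond what the paper requires. The local-to-global construction via the connectedness of $U(k)$, the Mayer--Vietoris induction, and the colimit argument for infinite-dimensional bases are all sound, and the remark that the gluing obstruction for a general real bundle is $w_1$ is a nice way to isolate where the complex hypothesis enters. For the purposes of this essay, which only invokes the Thom isomorphism as a black box to transport homology computations between ${\bf Top}_{BU}$ and $\bf hSp$, a citation to a standard reference (e.g.\ Milnor--Stasheff) would have sufficed; but nothing in your sketch is wrong.
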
	
	Consider the category ${\bf Top}_{BU}$ of spaces over $BU$ whose objects are maps $Y \xrightarrow{f} BU$ and whose morphisms are commutative diagrams
	$$
	\begin{tikzcd}
	Y \arrow[rr] \arrow[dr, "f"]& &Y' \arrow[dl, "f'"]\\
	&BU&
	\end{tikzcd}.
	$$
	Taking the Thom spectrum is a functor from the category ${\bf Top}_{BU}$ to the category $\bf hSp$. In this language, the Thom isomorphism theorem together with passing to the direct limits shows that the induced map on integral homology in ${\bf Top}_{BU}$ and $\bf hSp$ is the same.
	
	\vspace{1em}
	
	In the rest of this section on Thom spectra we turn away from the general theory and instead focus on increasingly specific objects. We introduce the main characters $MU$, $X(n)$, $F_k$, $G_j$ of the story that unfolds in Chapter 3.  
	\begin{definition}
		The spectrum $MU$ is the Thom spectrum associated to the identity map $BU \xrightarrow{\id} BU$.
	\end{definition}
	
	\begin{definition}
		The spectrum $X(n)$ is the Thom spectrum associated to the composite map $\Omega SU(n) \hookrightarrow \Omega SU \simeq BU$ where the second map is a homotopy equivalence by the Bott periodicity theorem.
	\end{definition}
	\begin{lemma}
		$X(n)$ and $MU$ are commutative ring spectra with $H_*(X(n); \Z) \cong \Z [ x_1,\dots, x_{n-1}]$ and $H_*(MU; \Z) \cong \Z [ x_1, x_2, \dots ]$ where $x_i$ is a generator of degree $2i$.
	\end{lemma}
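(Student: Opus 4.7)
The plan splits into two parts: producing the ring spectrum structure, and computing the homology.

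For the ring structure on $MU$, the key ingredient is that $BU$ is a homotopy commutative $H$-space under Whitney sum of virtual bundles. Explicitly, the map $\mu: BU \times BU \to BU$ classifies the external direct sum of the two pullbacks of the universal bundle, so $\mu^*(V_k)$ is a direct sum of bundles of the form $V_i \boxtimes V_j$ with $i+j=k$. Since the Thom space of an external direct sum is the smash product of the individual Thom spaces, Thomifying $\mu$ produces a map $MU \wedge MU \to MU$ after passing to the colimit defining the spectrum. The unit $S \to MU$ comes from Thomifying the inclusion $\ast \to BU$ of the rank-$0$ bundle, and the associativity, unit, and commutativity axioms are inherited from the corresponding diagrams in $\mathbf{Top}_{BU}$ which commute up to homotopy by the $H$-space axioms for $BU$. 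For $X(n)$ the same recipe works once we observe that $\Omega SU(n)$ is a topological group under loop concatenation, that the Bott equivalence $\Omega SU \simeq BU$ is an equivalence of $H$-spaces, and that $\Omega SU(n) \hookrightarrow \Omega SU$ is a continuous homomorphism; thus the composite $\Omega SU(n) \to BU$ is an $H$-map, and the same Thomification argument produces a commutative ring spectrum structure on $X(n)$ that is compatible with $MU$ via the evident map $X(n) \to MU$.

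For the homology, the starting point is the Thom isomorphism theorem, which gives $\widetilde{H}_{*+2k}(\mathrm{Th}_{Y^{(k)}}(f^*V_k);\Z) \cong H_*(Y^{(k)};\Z)$ for each $k$; passing to colimits yields $H_*(Y^f;\Z) \cong H_*(Y;\Z)$ as graded abelian groups. The crucial point is that when the base $Y$ is an $H$-space and $f$ is an $H$-map, the Thom class is multiplicative with respect to the $H$-structure, so this isomorphism upgrades to an isomorphism of graded rings between $H_*$ of the Thom spectrum and the Pontryagin ring $H_*(Y;\Z)$.

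It then remains to identify the Pontryagin rings. For $BU$ the standard calculation uses the inclusion $BU(1) = \C P^\infty \hookrightarrow BU$: setting $b_i \in H_{2i}(BU)$ to be the image of the standard generator of $H_{2i}(\C P^\infty)$, one shows that $H_*(BU;\Z) \cong \Z[b_1,b_2,\dots]$ by induction over the filtration $BU(k)$, using the fibration $BU(k-1) \to BU(k) \to \mathrm{Gr}_k(\C^\infty)$ or equivalently the splitting principle combined with the structure of $H^*(BU)$ and the universal coefficient theorem. For $\Omega SU(n)$ I would argue by induction on $n$: the fibration
\[
\Omega SU(n-1) \to \Omega SU(n) \to \Omega S^{2n-1}
\]
has polynomial base $H_*(\Omega S^{2n-1};\Z) = \Z[y_{n-1}]$ with $|y_{n-1}| = 2n-2$ (by the James construction applied to $S^{2n-2}$, Theorem \ref{James}), and the Serre spectral sequence collapses for degree reasons, giving $H_*(\Omega SU(n);\Z) \cong \Z[x_1,\dots,x_{n-1}]$ with $|x_i|=2i$. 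The map $\Omega SU(n) \hookrightarrow \Omega SU \simeq BU$ sends these generators to $b_1,\dots,b_{n-1}$, so combining with the multiplicativity of the Thom isomorphism produces the claimed polynomial descriptions of $H_*(X(n);\Z)$ and $H_*(MU;\Z)$.

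The main obstacle is the multiplicativity of the Thom isomorphism. One has to check carefully that the Thom classes of $V_k$ fit together into a global class on $MU$ that is a ring map from $H\Z$, equivalently that the $H$-space structure on $BU$ lifts through the Thomification to the spectrum-level multiplication in a way compatible with the cup/cap product with the Thom class. Once this compatibility is in hand, the computations of the two Pontryagin rings are standard.
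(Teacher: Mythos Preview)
Your argument is correct in substance and follows the same overall shape as the paper's: obtain the ring spectrum structure by Thomifying an $H$-space multiplication, then transfer the Pontryagin ring across the Thom isomorphism. The paper does this uniformly via the loop multiplication on $\Omega SU(n)$ and $\Omega SU$ (noting that loops on a topological group give a homotopy-commutative $H$-space), whereas you route $MU$ separately through the Whitney-sum $H$-structure on $BU$; both are fine, and compatibility follows because the Bott equivalence is an $H$-map.

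The genuine difference is in the homology computation. The paper runs the Serre spectral sequence for the path fibration $\Omega SU(n) \to P_* SU(n) \to SU(n)$, which requires first knowing $H_*(SU(n);\Z)$. Your inductive approach via the looped fibration $\Omega SU(n-1) \to \Omega SU(n) \to \Omega S^{2n-1}$ is arguably cleaner: the base is already a polynomial ring on one generator (by James), and the collapse is immediate for parity reasons. Your separate computation of $H_*(BU;\Z)$ is, however, redundant: once you have $H_*(\Omega SU(n);\Z)$ for all $n$, the limit gives $H_*(\Omega SU;\Z) \cong H_*(BU;\Z)$ directly via Bott, so there is no need to invoke $\C P^\infty$ or the $BU(k)$ filtration. (Incidentally, the fibration you name there, ``$BU(k-1) \to BU(k) \to \Gr_k(\C^\infty)$'', is not well-formed since $BU(k) = \Gr_k(\C^\infty)$; the alternative you mention via $H^*(BU)$ and duality is the standard one.) Your remark that the main technical point is the multiplicativity of the Thom isomorphism is well taken; the paper glosses over this and simply asserts that the $H$-space product passes to the Thom spectrum.
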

	\begin{proof}
		For any space $X$, the loop space $\Omega X$ is an $H$-space. This means that there is a map $\mu: \Omega X \times \Omega X \to \Omega X$ given by concatenation of loops. It is associative up to homotopy and the constant loop is its identity. By using the cross product this defines a strictly associative unital map $$ H_*(\Omega X;\Z) \otimes H_*(\Omega X;\Z) \xrightarrow{\times} H_*(\Omega X \times \Omega X;\Z) \xrightarrow{\mu_*} H_*(\Omega X;\Z)$$ on homology. This multiplication equips $H_*(\Omega X;\Z)$ with the ring structure. Whenever $X$ itself is a topological group, this product is commutative. Passing to $\bf hSp$ using the Thom isomorphism theorem we see that $\mu$ makes $X(n)$ and $MU$ into commutative ring spectra with $H_*(X(n); \Z) \cong H_*(\Omega SU(n);\Z)$ and $H_*(MU; \Z) \cong H_*(\Omega SU;\Z)$.
		
		The structure of the homology ring $H_*(\Omega SU(n);\Z) \cong \Z [ x_1,\dots, x_{n-1}]$ is obtained by studying the homological Serre spectral sequence for the path fibration $\Omega SU(n) \to P_*SU(n) \to SU(n)$ in which the homology of $SU(n)$ is well-known (or can be obtained using yet another Serre spectral sequence argument). The corresponding result for $H_*(\Omega SU;\Z)$ follows by passing to the evident direct limit.
	\end{proof}
	The following lemma is now immediate.
	\begin{lemma}\label{XnMU}
		$X(n) \to MU$ is $(2n-1)$-connected for any $n \in \N$.
	\end{lemma}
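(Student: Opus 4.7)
The plan is to reduce the homotopy-level statement to a homology computation, then bootstrap back up via the stable Hurewicz theorem. The key observation is that on integral homology both spectra are completely understood by the previous lemma.

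First I would identify the induced map $H_*(X(n); \Z) \to H_*(MU; \Z)$. Because taking Thom spectra is a functor ${\bf Top}_{BU} \to {\bf hSp}$ and the Thom isomorphism is natural, this map coincides with the map $H_*(\Omega SU(n); \Z) \to H_*(\Omega SU; \Z)$ induced on homology by the inclusion $\Omega SU(n) \hookrightarrow \Omega SU$. By the preceding lemma, this is the tautological inclusion of polynomial rings
$$ \Z[x_1, \ldots, x_{n-1}] \hookrightarrow \Z[x_1, x_2, \ldots], $$
with $|x_i| = 2i$. Since the first generator not already present on the left lives in degree $2n$, the map is an isomorphism in every degree $\leq 2n - 1$.

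Next, I would let $C$ denote the homotopy cofibre of $X(n) \to MU$, so that there is a long exact sequence on integral homology. The isomorphism above forces $H_d(C; \Z) = 0$ for $d \leq 2n - 1$. Since $X(n)$ and $MU$ are both connective, the cofibre $C$ is connective as well, and the stable Hurewicz theorem then applies: starting from $C$ being $(-1)$-connected, one inductively promotes the vanishing of $H_d(C; \Z)$ to the vanishing of $\pi_d(C)$ in the same range. Hence $\pi_d(C) = 0$ for $d \leq 2n - 1$, which is precisely the statement that the map $X(n) \to MU$ is $(2n-1)$-connected.

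There is no real obstacle here; the only point calling for care is verifying that the map on homology really is the inclusion of polynomial subrings, but this is immediate from naturality of the Thom isomorphism combined with the explicit description of $H_*(\Omega SU(n); \Z)$ and $H_*(\Omega SU; \Z)$ supplied by the previous lemma.
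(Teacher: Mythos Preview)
Your proof is correct and follows essentially the same route as the paper: identify the map on integral homology with the polynomial inclusion $\Z[x_1,\dots,x_{n-1}] \hookrightarrow \Z[x_1,x_2,\dots]$ via the Thom isomorphism, then use the stable Hurewicz/homology Whitehead theorem to upgrade the homology isomorphism in degrees $\leq 2n-1$ to a connectivity statement. The only difference is cosmetic: the paper invokes the ``quantitative homology Whitehead theorem'' directly, while you spell out the cofibre-plus-Hurewicz argument that underlies it.
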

	\begin{proof}
		Consider the map $\Omega SU(n) \hookrightarrow \Omega SU \simeq BU$ in ${\bf Top}_{BU}$. On integral homology this is the inclusion 
		$$H_*(\Omega SU(n);\Z) \cong \Z\left[x_1, \dots, x_{n-1}\right] \hookrightarrow \Z \left[x_1, x_2, \dots \right] \cong H_*(BU;\Z)$$
		and by the Thom isomorphism theorem there is the same effect on homology in $\bf hSp$ after passing to Thom spectra. Using the quantitative version of the homology Whitehead theorem we conclude that the map is $(2n-1)$-connected as required.
	\end{proof}
	In other words, the sequence of inclusions $\Omega SU(n) \to \Omega SU(n+1)$ in ${\bf Top}_{BU}$ gives rise to a sequence of ring spectra maps $\dots \to X(n) \to X(n+1) \to \cdots$ that form a filtration of $MU$.
	\begin{remark}
		Lemma \ref{XnMU} shows that $MU$ can be thought of as $X(\infty)$. On the other hand, $X(1)=S$ so the spectra $X(n)$ could be interpreted as interpolating steps between $S$ and $MU$. We later expand on this remark dramatically and see that this is precisely the perspective adopted in the proof of the nilpotence theorem. 
	\end{remark}
	We now define a further refinement of the spectra $X(n)$. Fix a unit vector $u \in \C^{n+1}$ and consider the fibration
	\begin{align*}
	SU(n) \to SU(n+1) &\xrightarrow{e} S^{2n+1}\\
	A& \mapsto Au.
	\end{align*}
	Applying the loop space functor $\Omega$ we obtain the fibration
	$$ \Omega SU(n) \to \Omega SU(n+1) \xrightarrow{\Omega e} \Omega S^{2n+1}$$
	and recall that $\Omega S^{2n+1} \simeq JS^{2n}$ where $JS^{2n}$ denotes the James construction on $S^{2n}$. The inclusion of a $2nk$-skeleton $J_kS^{2n} \hookrightarrow JS^{2n}$ defines the pullback bundle $B_k := i^*\Omega SU(n+1)$ and we can draw the diagram
	$$
	\begin{tikzcd}
	\Omega SU(n) \arrow[r, equal, "\id"] \arrow[d] & \Omega SU(n) \arrow[d]\\
	B_k := i^*\Omega SU(n+1) \arrow[d] \arrow[r, "i_{-1,0}"] & \Omega SU(n+1) \arrow[d, "\Omega e"]\\
	J_kS^{2n} \arrow[r, hook, "i"] & JS^{2n} \simeq \Omega S^{2n+1}  
	\end{tikzcd}
	$$
	noting that $B_k$ is only defined up to homotopy equivalence unless a particular homotopy equivalence $JS^{2n} \to \Omega S^{2n+1}$ is chosen.
	
	\vspace{1em}
	
	There is a compelling reason for the unusual name $i_{-1, 0}$ of the canonical map $B_k \to \Omega SU(n+1)$ in the diagram. We later encounter the maps $i_{s,t}$ for more general $s$ and $t$ and the map $i_{-1, 0}$ fits into that framework.

	\begin{definition}
		Let $F_k$ be the Thom spectrum associated to the map $B_k = i^*\Omega SU(n+1) \to \Omega SU(n+1) \to \Omega SU \xrightarrow{\simeq} BU$.
	\end{definition}
	Note that the filtration $J_0S^{2n} \subset J_1S^{2n} \subset \dots $ of $JS^{2n}$ by the partial James constructions induces by taking pullback fibre bundles the sequence of maps
	$$\Omega SU(n) = B_0 \to B_1 \to \dots \to \Omega SU(n+1).$$
	Passing to Thom spectra yields a filtration
	$$ X(n) = F_0 \to F_1 \to \dots \to X(n+1)$$
	 of $X(n+1)$. This hints at the role that the spectra $F_k$ assume in the proof of the nilpotence theorem. They serve as intermediate steps when passing between $X(n+1)$ and $X(n)$. Phrasing this in a formal language we obtain:
	\begin{lemma}\label{homologyofBk}
		The spectra $F_k$ are $X(n)$-module spectra and $H_*(F_k; \Z)$ is a free $H_*(X(n);\Z) \cong \Z [x_1, \dots, x_{n-1}]$-submodule of $\Z [x_1, \dots, x_n]$ generated by $1, x_n, \dots, x_n^{k}$.
	\end{lemma}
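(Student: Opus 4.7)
The plan is to derive both claims from the loop multiplication on $\Omega SU(n+1)$ combined with the Serre spectral sequence of the fibration $\Omega SU(n) \to B_k \to J_k S^{2n}$.

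I would begin by constructing the module structure via Thomification of a base-level action. The $H$-space multiplication $\mu \colon \Omega SU(n+1) \times \Omega SU(n+1) \to \Omega SU(n+1)$ commutes with the fibration map $\Omega e$, so it restricts to an action $\Omega SU(n) \times B_k \to B_k$. Under the Bott equivalence $\Omega SU \simeq BU$ the loop multiplication corresponds to the Whitney-sum $H$-structure on $BU$, which exhibits this action as a morphism in ${\bf Top}_{BU}$. Since Thomification sends external direct sums to smash products of Thom spectra, Thomifying the action produces $X(n) \wedge F_k \to F_k$, and the $H$-space axioms descend to the associativity and unit axioms of an $X(n)$-module spectrum.

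For the homology, the Thom isomorphism is an isomorphism of $H_*(X(n); \Z) \cong H_*(\Omega SU(n); \Z)$-modules, so it suffices to analyse $H_*(B_k; \Z)$. I would run the homology Serre spectral sequence of $\Omega SU(n) \to B_k \to J_k S^{2n}$: the base is simply-connected, the fibre has $H_*(\Omega SU(n); \Z) = \Z[x_1, \ldots, x_{n-1}]$ concentrated in even degrees, and the base homology is concentrated in degrees divisible by $2n$. Every nonzero bidegree on the $E^2$-page thus has even total degree, so all differentials --- which change the parity of the total degree --- vanish. The collapse identifies $H_*(B_k; \Z)$ as a free $H_*(\Omega SU(n); \Z)$-module on lifts $\tilde x_n^0, \ldots, \tilde x_n^k$ of the generators of $H_*(J_k S^{2n}; \Z)$.

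To realise this submodule inside $\Z[x_1, \ldots, x_n]$ I would compare with the Serre spectral sequence of $\Omega SU(n) \to \Omega SU(n+1) \to \Omega S^{2n+1}$, which collapses for the same parity reason and recovers $H_*(\Omega SU(n+1); \Z) = \Z[x_1, \ldots, x_n]$ with $x_n$ detected on the bottom row. Naturality of the spectral sequence along $B_k \to \Omega SU(n+1)$ shows that the induced map on $E^\infty$ is the inclusion of the submodule spanned by $1, x_n, \ldots, x_n^k$ into $\Z[x_n]$ tensored with $H_*(\Omega SU(n))$. Because $\Z[x_1, \ldots, x_n]$ is torsion-free and the map is injective on the associated graded, the induced map $H_*(F_k) \hookrightarrow H_*(X(n+1))$ is itself injective; a triangular change of basis --- clearing the lower-filtration corrections in each $\tilde x_n^j$ --- then shows its image equals the $\Z[x_1, \ldots, x_{n-1}]$-submodule generated by $1, x_n, \ldots, x_n^k$, as required.

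The main obstacle is the Thomified module structure: one must verify that loop multiplication on $\Omega SU(n+1)$ really is compatible with Whitney sum on $BU$ through Bott periodicity and that Thomification turns such a product into a smash of spectra --- classical but technical points. A secondary wrinkle is the filtration bookkeeping at the end, since the lifts $\tilde x_n^j$ differ from honest monomials $x_n^j$ by lower-filtration terms that must be absorbed inductively without enlarging the submodule.
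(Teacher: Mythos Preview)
Your proposal is correct and follows essentially the same route as the paper: an $\Omega SU(n)$-action on $B_k$ Thomified to give the $X(n)$-module structure, and a Serre spectral sequence for $\Omega SU(n) \to B_k \to J_kS^{2n}$ collapsing by the even-degree argument to compute the homology. The paper is terser on the submodule identification inside $\Z[x_1,\dots,x_n]$; your naturality comparison with the larger fibration $\Omega SU(n) \to \Omega SU(n+1) \to \Omega S^{2n+1}$ is a reasonable way to fill that in.

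One genuine difference worth noting: for the action, the paper does not restrict the loop multiplication on $\Omega SU(n+1)$ directly. Instead it extends the fibre sequence one step to the right, $\Omega SU(n) \to B_k \to J_kS^{2n} \to SU(n)$, identifies $B_k$ as the homotopy fibre of $J_kS^{2n} \to SU(n)$, and uses the standard action of $\Omega SU(n)$ on a homotopy fibre by path concatenation. This sidesteps the strictness issue you flagged --- whether $\Omega SU(n) \times B_k \to \Omega SU(n+1)$ lands \emph{in} $B_k$ rather than merely near it --- and also makes the compatibility with the map to $BU$ cleaner, since the action is built intrinsically from the fibre sequence rather than restricted from ambient loop multiplication. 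Your approach works too once you use Moore loops or otherwise rigidify, but the paper's packaging avoids that bookkeeping.
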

	\begin{proof}
		As usual we prove the result in ${\bf Top}_{BU}$ and then pass to $\bf hSp$ with the Thom isomorphism theorem. Any map $f: X \to Y$ in $\bf Top$ can be replaced by a fibration and then $\Omega Y$ acts on the homotopy fibre of $f$ by concatenation of loops. This construction is revisited and explained in greater depth in Section 3.3.3. The fibration $\Omega SU(n) \to B_k \to J_kS^{2n}$ extends to the right to the homotopy fibre sequence $\Omega SU(n) \to B_k \to J_kS^{2n}\to SU(n)$ and so by the above construction $\Omega SU(n)$ acts on the homotopy fibre of $J_kS^{2n} \to SU(n)$ and hence on $B_k$. This action makes $F_k$ into an $X(n)$-module spectrum after passing to Thom spectra and equips $H_*(F_k; \Z)$ with the $H_*(X(n); \Z)$-module structure.
		
		\vspace{1em}	
		
		To calculate the homology $H_*(F_k; \Z)$ consider the Serre spectral sequence for integral cohomology associated to the fibration $\Omega SU(n) \to B_k \to J_kS^{2n}$. The cohomology of both the base and the fibre is known and concentrated in even degrees, so all differentials in the spectral sequence are zero. By dualizing this lets us compute the structure of $H_*(B_k;\Z)$ and hence $H_*(F_k;\Z)$ by the Thom isomorphism theorem.
	\end{proof}
	\begin{definition}
		Let $G_j = {F_{p^j-1}}_{(p)}$ be the $p$-localisation of $F_{p^j-1}$ at a prime number $p$.
	\end{definition}
	
	\section{Adams spectral sequence}	
	The Adams spectral sequence and its generalizations are the main tools of the stable homotopy theory. There are many inequivalent definitions and convergence results concerning these spectral sequences. This section introduces what we shall call the Adams spectral sequence.
	
	\vspace{1em}
	
	We shall use the non-classical Adams spectral sequence based on $X(n+1)$. For completeness, let us define the Adams spectral sequence based on any ring spectrum $E$ here.
	\begin{definition}
		A non-canonical Adams resolution for $X$ based on $E$ is the diagram
		$$\begin{tikzcd}
		X = X_0 \arrow[d] & X_1 \arrow[d] \arrow[l] & X_2 \arrow[d] \arrow[l] & \cdots \arrow[l]\\
		K_0 & K_1 & K_2
		\end{tikzcd}$$
		in which each $X_{s+1} \to X_s \to K_s$ is a homotopy fibre sequence and $K_s$ and $E \land X_s$ are retracts of $E \land K_s$.
	\end{definition}
	Each homotopy fibre sequence $X_{s+1} \to X_s \to K_s \to \Sigma X_{s+1}$ gives a long exact sequence of homotopy groups. These comprise an exact couple and the spectral sequence associated to this exact couple is called the Adams spectral sequence for $X$ based on $E$.
	
	\vspace{1em}
	
	Under certain technical conditions (\emph{i.e.} if $E$ is flat) the $E_2$-term of the spectral sequence can be identified as a certain $\Ext$ group. 
	\begin{remark}
		No knowledge of this is required to follow the proof of the nilpotence theorem presented in this essay. This is because the information we shall need to extract from our Adams spectral sequence is very coarse -- so much so, that it can be obtained using only the $E_1$-page and the connectivity properties of $K_s$.
	\end{remark}
	The Adams spectral sequence converges to $\pi_*(X)$ under certain technical conditions. The groups $E_\infty^{s, t}(X)$ are the subquotients of $\pi_{t-s}(X)$ associated to the Adams filtration of $\pi_{t-s}(X)$.
	\begin{definition}
		Let $\alpha: S^d \to X$ be a map of spectra. The map $\alpha \in \pi_d(X)$ has Adams filtration $s$ if $s$ is the smallest integer such that $\alpha$ can be factored as
		$$S^d \xrightarrow{\alpha_1} W_1 \xrightarrow{\alpha_2} \cdots \xrightarrow{\alpha_{s-1}}W_{s-1}\xrightarrow{\alpha_s} X $$
		where $E_*(\alpha_i)=0$ for each $i$. If there is no such integer $s$, then $\alpha$ has Adams filtration $0$.
	\end{definition}
	Note that if $E_*(\alpha) \neq 0$, then the Adams filtration of $\alpha$ is $0$.
	\begin{definition}
		Let $F^s_d = \{ \alpha \in \pi_d(X) \ | \ \alpha \text{ has Adams filtration} \geq s \}$. Then the	filtration
		$$ \dots \subset F^2_d \subset F^1_d \subset F^0_d$$
		is the Adams filtration of $\pi_d(X)$.
	\end{definition}
	We can now give a more precise, although still an incomplete statement of the convergence theorem.
	\begin{theorem}\label{convergence}
		The Adams spectral sequences for $X$ based on a ring spectrum $E$ considered in this essay converge to $\pi_*(X)$. This means that
		\begin{itemize}
			\item $E_\infty^{s, t}(X) \cong \frac{F^s_{t-s}}{F^{s+1}_{t-s}}$ for all $s, t$ and
			\item $\bigcap_{s=0}^\infty F^s_{d} = 0$ for all $d$
		\end{itemize}
		where $F^s_d$ denotes the abelian groups in the Adams filtration of $\pi_d(X)$.
	\end{theorem}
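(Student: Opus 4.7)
The plan is to convert the abstract convergence statement into concrete claims about the tower $(X_s)$ coming from the Adams resolution, and then verify these by connectivity estimates. Structurally I would split the argument into three stages: (i) identify the Adams filtration with the filtration by images from the tower, (ii) deduce the $E_\infty$ identification from standard exact-couple formalism, and (iii) establish the Hausdorff property $\bigcap_s F^s_d = 0$.

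For stage (i) I would prove the key equality $F^s_d = \im\bigl(\pi_d(X_s) \to \pi_d(X)\bigr)$. The forward inclusion rests on the observation that each map $X_{i+1} \to X_i$ in the resolution is $E$-null: smashing with $E$, the composite $E \land X_{i+1} \to E \land X_i \to E \land K_i$ vanishes since it is part of a cofibre sequence, and because $E \land X_i$ is a retract of $E \land K_i$ the first map is itself null. Hence any lift to $X_s$ produces a factorisation of $\alpha$ through $s-1$ intermediate $E$-null maps. For the reverse inclusion I would argue inductively that an $E$-null map $\beta: W \to X_i$ lifts to $X_{i+1}$: it suffices that $W \to X_i \to K_i$ is null, and since $K_i$ is a retract of $E \land K_i$, this reduces to showing that $W \to E \land K_i$ is null, which is clear because this map factors through the null map $E \land W \to E \land X_i$. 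Iterating the lifting argument along the given factorisation of $\alpha$ through $E$-null pieces then produces the desired lift $S^d \to X_s$.

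Stage (ii) is routine bookkeeping once (i) is in hand. The projection $\pi_d(X_s) \to \pi_d(K_s) = E_1^{s,s+d}$ lands in the permanent cycles, two lifts differ by a boundary, and the boundary subgroup precisely detects classes that lift one step further down the tower; this yields the natural isomorphism $F^s_{t-s}/F^{s+1}_{t-s} \cong E_\infty^{s,t}$ by the standard derivation of a spectral sequence from an exact couple.

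The main obstacle is stage (iii), and this is where the unspecified ``technical conditions'' really enter. Reformulating via (i), Hausdorffness is equivalent to $\pi_d(X_s) \to \pi_d(X)$ being zero for $s$ large in each fixed degree $d$, which in turn follows once the connectivity of $X_s$ tends to infinity with $s$. For the Adams resolutions actually used in this essay — those built from $X(n+1)$ and applied to connective, finite-type input spectra — I would construct the resolutions inductively so that each $K_s$ is chosen with the smallest available connectivity above that of $X_s$, ensuring that passing to the fibre $X_{s+1}$ increases connectivity by a uniform positive amount at every stage. Verifying that such well-behaved $K_s$ can be built is the hard step: it relies on $X(n+1)$ being a connective ring spectrum of finite type and on the connectivity and finite-type hypotheses on $X$ that hold throughout our applications, together with a standard argument that iteratively kills the bottom homotopy group of $X_s$ at each stage. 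Once this connectivity estimate is in place, both bullet points of the theorem follow simultaneously.
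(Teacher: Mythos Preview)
The paper does not prove this theorem. It is stated in the background chapter as a known convergence result for the Adams spectral sequence, with no proof supplied; the essay explicitly calls it ``a more precise, although still an incomplete statement of the convergence theorem'' and then moves on. So there is no paper proof to compare your proposal against.

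Your sketch is a reasonable outline of the standard argument, and in fact stage~(iii) dovetails nicely with what the paper actually does later: in Step~I of the nilpotence proof the resolution of $G_j$ is built by hand and the connectivity of $K_s$ is computed explicitly to be $(2p^jn-1)s$, which is exactly the kind of uniform connectivity growth you invoke. One point to tighten in stage~(i): your lifting argument for the reverse inclusion tacitly treats ``$E_*(\beta)=0$'' as ``$E\land\beta$ is null-homotopic'', which is a stronger condition. The paper's Definition~2.13 uses the weaker hypothesis $E_*(\alpha_i)=0$, so to push the lift through you need the retract condition on $K_s$ and $E\land X_s$ more carefully (the usual trick is that $W\to K_i\to E\land K_i$ factors through $E\land W\to E\land X_i\to E\land K_i$, and one argues this composite is null using the retract of $E\land X_i$ off $E\land K_i$ together with $E_*(\beta)=0$). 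This is fixable, but as written the step has a gap.
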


%

	\chapter{Nilpotence Theorem}
	\section{Motivation}
	In the introduction we tried to explain how one might naturally arrive at the statement of the nilpotence theorem, starting from a very basic and concrete question in algebraic topology. However, this was historically not why the theorem was conjectured, nor is it the best way to think about it presently. Studying nilpotent self-maps may feel artificial and overly restrictive until a broader historical context is introduced. Such a context is provided by Nishida's theorem \cite{nishida1973nilpotency}.
	\begin{theorem}[Nishida's theorem]
		Every element of positive degree of $\pi_*^S$ is nilpotent.
	\end{theorem}
	We prove this result as an elementary consequence of the nilpotence theorem in the final chapter of this essay. Historically, however, Nishida's theorem preceded the nilpotence conjecture and in fact influenced its formulation and served as evidence for its truth.
	
	\vspace{1em}
	
	There are three ways to think about the ring structure on $\pi_*^S$. Let $f, g \in \pi_*^S$. Then their product can be thought of roughly as:
	\begin{enumerate}[itemsep=0pt, label=$-$]
		\item $f \circ g$, 
		\item $f \land g$ or
		\item $m(f,g)$.
	\end{enumerate}
	The three perspectives hint at the fact that generalizations on the Nishida's theorem in different directions may be possible. Indeed, for each of the perspectives, there is a corresponding version of the nilpotence theorem.
	
	\vspace{1em}
	The first perspective emphasizes the study of self-maps $f: \Sigma^d S \to S$ and their suspensions.
	\begin{theorem}[Nilpotence theorem, self-map form]
		Let $X$ be a finite spectrum and let $\alpha: \Sigma^d X \to X$ be a self-map for some $d$. If $MU_*(\alpha)=0$ then $\alpha$ is nilpotent.
	\end{theorem}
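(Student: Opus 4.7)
The plan is to reduce the self-map form to the ring spectrum form via Spanier-Whitehead duality. Since $X$ is finite, the duality supplies a natural isomorphism $[\Sigma^d X, X] \cong \pi_d(DX \wedge X)$. I would write $R := DX \wedge X$ and let $\tilde\alpha \in \pi_d(R)$ denote the element corresponding to $\alpha$ under this bijection; the goal then becomes to show that $\tilde\alpha$ is nilpotent in $\pi_*(R)$.

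The first step is to endow $R$ with a ring spectrum structure whose multiplication on $\pi_*(R)$ reproduces composition of self-maps of $X$. I would take the unit $S \to R$ to be the coevaluation $S \to X \wedge DX$ (adjoint to $\id_X$), and the multiplication to be
$$m_R: R \wedge R = DX \wedge X \wedge DX \wedge X \xrightarrow{\id \wedge \mathrm{ev} \wedge \id} DX \wedge X = R,$$
using the duality evaluation $\mathrm{ev}: X \wedge DX \to S$. A diagram chase with the triangle identities for the duality shows that $(R, \eta_R, m_R)$ is indeed a ring spectrum, and that $\tilde\alpha^k \in \pi_{kd}(R)$ corresponds to the iterate $\alpha^k: \Sigma^{kd}X \to X$. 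In particular, $\tilde\alpha$ is nilpotent in $\pi_*(R)$ if and only if $\alpha$ is nilpotent as a self-map of $X$.

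The second step is to verify that $\tilde\alpha$ lies in the kernel of the Hurewicz homomorphism $h: \pi_*(R) \to MU_*(R)$; the ring spectrum form will then immediately yield $\tilde\alpha^N = 0$ for some $N$, hence $\alpha^N \simeq 0$. Tracing the duality through, I would use the induced S-W isomorphism $MU_d(R) \cong [\Sigma^d X, MU \wedge X]$ to identify $h(\tilde\alpha)$ with the composite $\Sigma^d X \xrightarrow{\alpha} X \xrightarrow{\eta_{MU}} MU \wedge X$. Hence $h(\tilde\alpha) = 0$ if and only if $MU \wedge \alpha \simeq 0$, which is the smash-level content of the hypothesis $MU_*(\alpha) = 0$ for a finite $X$, possibly after passing to a suitable iterate of $\alpha$ (harmless for the nilpotence question). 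With $\tilde\alpha \in \ker h$ established, the ring spectrum form of the nilpotence theorem completes the argument.

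The main obstacle is precisely this translation step: reconciling the homology-level hypothesis $MU_*(\alpha) = 0$ with the smash-level condition $h(\tilde\alpha) = 0$ needed to invoke the ring spectrum form. When $MU_*(X)$ is a free $MU_*$-module, a Künneth argument identifies $MU_*(R)$ with $\mathrm{End}_{MU_*}(MU_*(X))$ and makes the two conditions literally the same; in general one must work a bit harder, exploiting the finiteness of $X$ to produce an iterate $\alpha^N$ for which the smash-level nullity holds. The construction of the ring structure on $R$ and the compatibility of ring multiplication with composition of self-maps are, by contrast, routine diagram chases once S-W duality is in hand.
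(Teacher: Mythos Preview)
Your proposal is correct and follows essentially the same route as the paper: set $R = DX \land X$, equip it with the endomorphism ring spectrum structure via Spanier--Whitehead duality, identify nilpotence of $\alpha$ with nilpotence of $\tilde\alpha \in \pi_*(R)$, and check $h(\tilde\alpha)=0$ (after replacing $\alpha$ by a suitable iterate $\alpha^N$, using finiteness of $X$ to pass from $MU_*(\alpha)=0$ to nullity of $\Sigma^{Nd}X \xrightarrow{\alpha^N} X \to MU \land X$). The paper makes exactly this passage-to-an-iterate step explicit via the telescope $MU \land \alpha^{-1}X$ being contractible, which is the mechanism you allude to in your final paragraph.
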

	The second perspective emphasizes that the product in $\pi_*^S$ comes from the smash product.
	\begin{theorem}[Nilpotence theorem, smash product form]
		Let $F$ be a finite spectrum and $f: F \to X$ a map of spectra. If $\id_{MU} \land f$ is null-homotopic, then $f$ is smash nilpotent.
	\end{theorem}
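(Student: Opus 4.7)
The plan is to deduce the smash product form directly from the ring spectrum form by packaging the smash powers of $f$ into a single element of an auxiliary ring spectrum. Since $F$ is finite, Spanier-Whitehead duality gives $[F, X] \cong \pi_0(DF \land X)$, so the map $f$ has an adjoint $\tilde f : S \to A$, where $A := DF \land X$. The hypothesis $\id_{MU} \land f \simeq 0$ is equivalent under duality to the vanishing of the Hurewicz image of $\tilde f$ in $MU_*(A)$; indeed, under the natural isomorphism $[F, MU \land X] \cong \pi_0(DF \land MU \land X) = MU_0(A)$, the class of $\id_{MU} \land f$ corresponds precisely to $h(\tilde f)$.

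Next, I would form the tensor algebra
$$ R := \bigvee_{n \geq 0} A^{\land n}, \qquad A^{\land 0} := S, $$
which is a ring spectrum: its multiplication $R \land R \to R$ is assembled, using the distributivity of $\land$ over wedges together with associativity, from the canonical identifications $A^{\land n} \land A^{\land m} \simeq A^{\land(n+m)}$, and its unit is the inclusion of the $n=0$ summand. The inclusion of the $n=1$ summand promotes $\tilde f$ to an element of $\pi_0(R)$, and a direct unwinding shows that its $k$-th power in $R$ is the composite $S \xrightarrow{\tilde f^{\land k}} A^{\land k} \hookrightarrow R$.

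Because smashing with $MU$ commutes with wedges, $MU \land R \simeq \bigvee_n MU \land A^{\land n}$, and the image of $\tilde f$ under the Hurewicz map $\pi_0(R) \to MU_0(R)$ factors through the single summand $MU \land A$, where it already vanishes by the previous step. The ring spectrum form of the nilpotence theorem applied to $R$ then supplies an integer $k$ with $\tilde f^k = 0$ in $\pi_*(R)$. Projecting $R$ onto the $n=k$ summand, which retracts the inclusion $A^{\land k} \hookrightarrow R$, yields $\tilde f^{\land k} = 0$ in $\pi_0(A^{\land k}) \cong [F^{\land k}, X^{\land k}]$; by Spanier-Whitehead duality in the form $D(F^{\land k}) \simeq (DF)^{\land k}$, this is precisely the vanishing of $f^{\land k}$, proving smash nilpotence.

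There is no genuine obstacle at this stage: all of the deep work has been absorbed into the ring spectrum form, and the present deduction is a formal reduction. The one design decision worth emphasising is the choice of the free associative ring $R = \bigvee_n A^{\land n}$, which is the natural gadget converting geometric smash nilpotence into algebraic nilpotence of a single element satisfying the hypothesis of the ring spectrum form.
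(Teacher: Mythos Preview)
Your approach is essentially the paper's: pass to the Spanier--Whitehead adjoint, form the free tensor algebra, and invoke the ring spectrum form. However, there is a genuine gap in your reduction, stemming from which version of the ring spectrum form is actually available.

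In the paper's logical architecture, the smash product form is deduced from the \emph{weak} ring spectrum form (Theorem~\ref{weakRingSpectrumForm}), which requires $R$ to be connective and of finite type; the strong form with no hypotheses on $R$ is only obtained \emph{afterwards}, as a consequence of the smash product form itself. Your spectrum $A = DF \land X$ is built from an arbitrary $X$, so $R = \bigvee_{n \geq 0} A^{\land n}$ need not be connective and will almost never be of finite type. Invoking the strong form here would be circular, and invoking the weak form is not justified.

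The paper repairs this with one extra move you omit: since any spectrum is a filtered homotopy colimit of its finite subspectra, both $f$ and the null-homotopy of $\id_{MU} \land f$ factor through $X_\alpha \hookrightarrow X$ for some finite $X_\alpha$. One then suspends so that $Y = \Sigma^d X_\alpha$ is $0$-connected, and forms $R = \bigvee_j Y^{\land j}$. Now $R$ is connective and of finite type, the weak ring spectrum form applies, and the rest of your argument goes through verbatim. Once you insert this finiteness step, your proof and the paper's coincide.
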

	The third perspective emphasizes the fact that $S$ has the structure of a ring spectrum with the abstract multiplication map denoted by $m$.
	\begin{theorem}[Nilpotence theorem, ring spectrum form]
		Let $R$ be a ring spectrum and let
		$$h: \pi_*(R) \to MU_*(R)$$
		be the Hurewicz homomorphism. Then every element of $\ker h$ is nilpotent.
	\end{theorem}
	We first prove the ring spectrum form of the nilpotence theorem following the Ravenel's sketch of the proof \cite{ravenel1992nilpotence}, which in turn is mostly based on the original paper by Devinatz--Hopkins--Smith \cite{devinatz1988nilpotence}. Afterwards we deduce the other two forms of the theorem and give some of its applications.
	
	\begin{remark}
		In the literature, one may find different variations of each of these forms of the nilpotence theorem. This is because the assumptions on the original spectra (e.g. connectivity, finiteness or finite type) are sometimes imposed for the ease of the exposition and sometimes omitted for generality. We prove the strongest versions of the ring spectrum and smash product forms, but a weaker version of the self-map form for its nice classical interpretation.
	\end{remark}
	
	\section{Organization of the proof}
	This section describes the structure of this chapter. As mentioned, there are several versions and variants of the nilpotence theorem and if we are not very explicit, a confusion can arise as to which one we have in mind at any particular moment. Therefore, let us be explicit about it. Most of this essay is dedicated to proving the following theorem and whenever we refer to the `nilpotence theorem', it is likely that we mean this version.
	\begin{theorem}[Nilpotence theorem, weak ring spectrum form]\label{weakRingSpectrumForm}
		Let $R$ be a connective ring spectrum of finite type and let
		$$h: \pi_*(R) \to MU_*(R)$$
		be the Hurewicz homomorphism. Then every element of $\ker h$ is nilpotent.
	\end{theorem}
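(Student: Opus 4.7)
The goal is to show that any $\alpha \in \ker h \subset \pi_d(R)$ is nilpotent. I would first reformulate this: $\alpha$ is nilpotent if and only if the telescope $\alpha^{-1}R$ is contractible, using that $R$ is connective and of finite type so that a non-nilpotent $\alpha$ would force some nonzero homotopy class to survive inversion. Contractibility of a connective finite-type spectrum can be tested $p$-locally at each prime, so fix a prime $p$ and work $p$-locally throughout. The hypothesis $h(\alpha)=0$ means geometrically that multiplication by $\alpha$ on $MU_*(R)$ factors through $h(\alpha) = 0$, hence $MU \land \alpha^{-1}R \simeq *$; the task is then to promote this $MU$-acyclicity of $\alpha^{-1}R$ to actual contractibility.

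The overall strategy is a descending induction along the filtration $S = X(1) \to X(2) \to \cdots \to MU$ (noting $X(1) = S$ and $MU \simeq \operatorname{hocolim}_n X(n)$), with the claim that $X(n) \land \alpha^{-1}R \simeq *$ for every $n \geq 1$. The top of the induction comes from the hypothesis together with a compactness argument: since $MU$ is a filtered homotopy colimit and $R$ is of finite type, $MU$-acyclicity of $\alpha^{-1}R$ can be upgraded to $X(N) \land \alpha^{-1}R \simeq *$ for some finite (possibly large) $N$, because classes of bounded complexity must die at a finite stage. The terminal case $n = 1$ is exactly what we want.

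The inductive step, going from $X(n+1)$-vanishing to $X(n)$-vanishing, is the technical heart of the proof and is where I expect essentially all the difficulty to lie. My plan is to run an Adams spectral sequence based on $X(n+1)$ converging (by Theorem \ref{convergence}) to $\pi_*(\alpha^{-1}R \land X(n))$, with Adams resolution built from the $p$-local intermediate Thom spectra $G_j = (F_{p^j - 1})_{(p)}$ from the background chapter. Lemma \ref{homologyofBk} identifies $H_*(F_k)$ as a free $H_*(X(n))$-module on $\{1, x_n, \ldots, x_n^k\}$, which is precisely what makes the $G_j$ the right algebraic building blocks to interpolate between $X(n)$ and $X(n+1)$. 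Combining the inductive hypothesis $\alpha^{-1}R \land X(n+1) \simeq *$ with a vanishing line for the resulting spectral sequence should force $\pi_*(\alpha^{-1}R \land X(n)) = 0$.

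The hardest part will be establishing a sharp enough vanishing line in this spectral sequence, since the $E_1$-page information --- controlled geometrically via the James construction for $\Omega S^{2n+1}$, Snaith's splitting, and the $H_*(X(n))$-module structure on $H_*(G_j)$ --- is rather coarse, and one must check that no stray classes in high Adams filtration survive. A further subtlety, and I suspect the genuinely delicate point, is arranging the induction so that the auxiliary parameters (connectivity bounds on $R$, the degree of $\alpha$ relative to the slope of the vanishing line, the Adams filtration needed to annihilate $\alpha$) propagate cleanly from $n+1$ to $n$ rather than degrading each time, because the descent must be iterated an unbounded number of times. Reconciling these bounds by a careful geometric study of the maps $F_k \to F_{k+1}$ is, as the chapter introduction hints, exactly where Ravenel's geometric reformulation replaces the most technical algebraic lemma of Devinatz--Hopkins--Smith.
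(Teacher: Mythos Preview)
Your large-scale plan is correct and matches the paper: reformulate via the telescope, work $p$-locally, and descend along the filtration $S = X(1) \to \cdots \to MU$ using the interpolating spectra $G_j$. However, your description of the inductive step contains a genuine gap that conceals precisely the hardest part of the argument.

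You propose to run a single $X(n+1)$-based Adams spectral sequence converging to $\pi_*(\alpha^{-1}R \land X(n))$ and to extract the result from a vanishing line. This does not work. The vanishing line one actually obtains (Step I in the paper) lives in the $X(n+1)$-based Adams spectral sequence for $\pi_*(G_j \land R)$, and its slope is $\frac{1}{2p^jn-1}$, which is only useful when $j$ is large. For $j=0$, i.e.\ for $X(n)$ itself, there is no usable vanishing line at the $E_2$-page; the remark following Proposition~\ref{StepI} stresses exactly this point. So the Adams spectral sequence argument, on its own, establishes only that $G_j \land \alpha^{-1}R \simeq *$ for \emph{sufficiently large} $j$, not for $j=0$.

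Bridging the gap from large $j$ down to $G_0 = X(n)_{(p)}$ requires an entirely separate argument, namely the Bousfield equivalence $\langle G_j \rangle = \langle G_{j+1} \rangle$ for every $j$ (Step II, Proposition~\ref{G_j=G_j+1}). This is the deepest part of the proof and is nowhere hinted at in your proposal: one constructs a self-map $b: \Sigma^{2mp-2}G_j \to G_j$, proves $\langle G_j \rangle = \langle G_{j+1} \rangle \vee \langle b^{-1}G_j \rangle$, and then shows $b^{-1}G_j \simeq *$ by factoring $b$ through $\Sigma^\infty \Omega^2 S^{2m+1}_+ \land G_j$, invoking Snaith's splitting, and using Mahowald's identification of a certain telescope with $H\F_p$. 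The ``careful geometric study of the maps $F_k \to F_{k+1}$'' you allude to, and Ravenel's geometric reformulation, both belong to this step, not to sharpening a vanishing line. Your worry about parameters degrading under iteration is therefore misplaced: the Bousfield-class argument is what makes the descent from $G_j$ to $G_0$ lossless, and it is independent of the Adams filtration bounds.
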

	Other forms of the nilpotence theorem (smash product form, strong ring spectrum form, self-map form) are deduced from Theorem \ref{weakRingSpectrumForm} in Sections 3.5, 3.6 and 3.7. The following schematic diagram depicts the implications we prove in these sections.
	\begin{center}
		\includegraphics[width=0.8\textwidth]{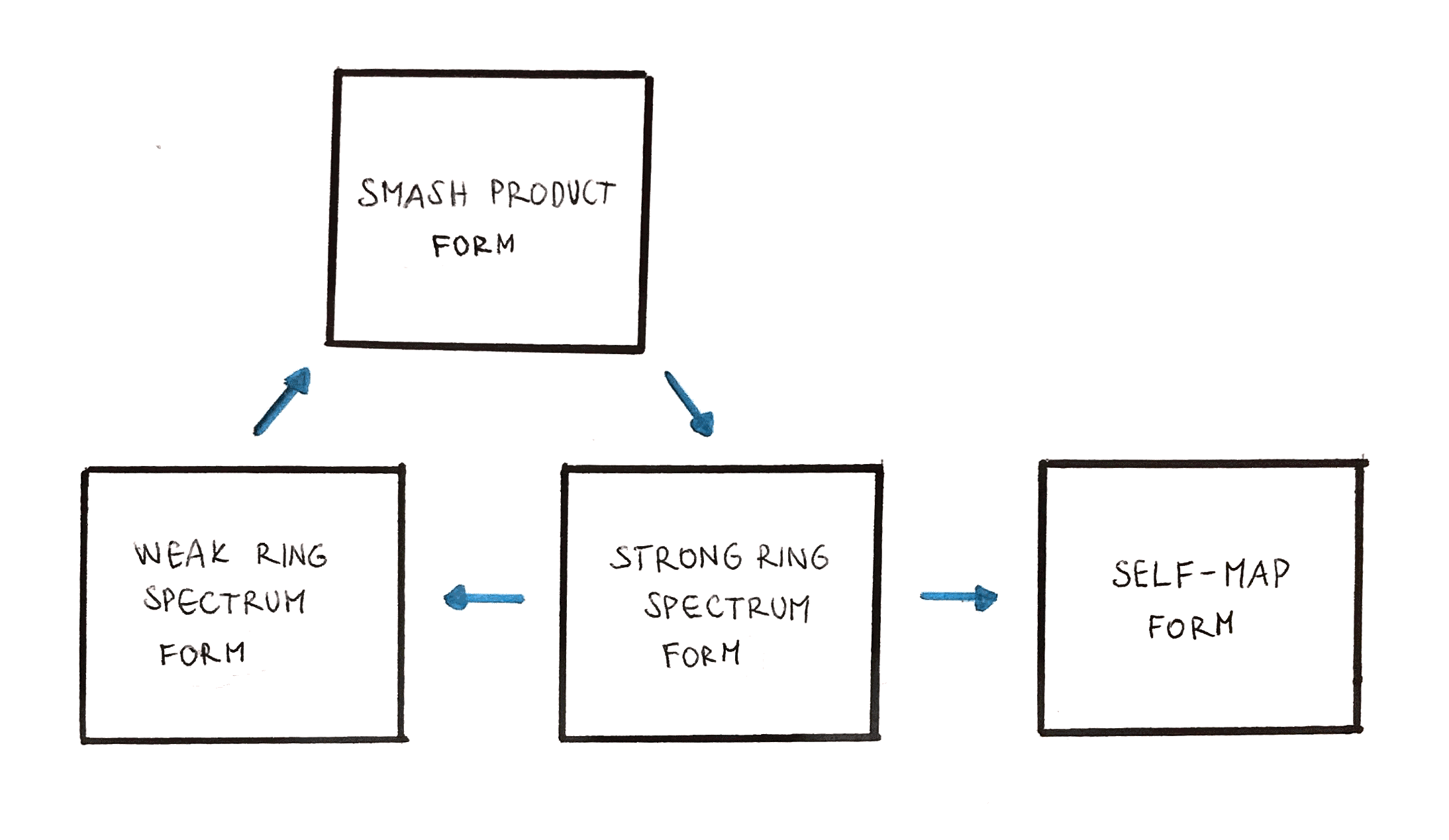}
	\end{center}
	
	\vspace{-2em}
	
	We now give an overview of the initial strategy of tackling the problem. It turned out to be unsuccessful, but it did serve as a basis for more sophisticated attempts, including the eventual proof.
	
	\subsection{Early Attempts}\label{earlyAttempts}
	Early attempts at the proof of the nilpotence theorem tried to establish that the spectra $S$ and $MU$ are Bousfield equivalent, see \cite[Section 7.4]{ravenel1992nilpotence}. A rough structure of the proposed proof was as follows. Let $\alpha \in \ker h$.
	\begin{itemize}
		\item {\bf Step I}: Show that $MU \land \alpha^{-1}R$ is contractible.
		\item {\bf Step II}: Show that $\langle S \rangle = \langle MU \rangle$.
	\end{itemize}
	By the Bousfield equivalence it now follows that $S \land \alpha^{-1}R = \alpha^{-1}R$ is contractible. By definition $\alpha^{-1}R$ is the homotopy colimit of
	$$ R \xrightarrow{\alpha} \Sigma^{-d}R \xrightarrow{\alpha} \Sigma^{-2d} R \to \cdots$$
	Taking homotopy groups of the diagram and using the fact that $\pi_*(\alpha^{-1}R)=0$ shows that every element $\beta \in \pi_*(R)$ satisfies $\alpha^m \beta =0$ for some $m$. In particular, this is true for $\alpha \in \pi_d(R)$ and so $\alpha$ is nilpotent as required.
	
	\vspace{1em}
	
	Step I of the proposed proof is true and clear. On the other hand, Step II was shown not to hold by Brown and Comenetz \cite{brown1976pontrjagin} who constructed a non-contractible spectrum $cY$ with $MU_*(cY)=0$. In fact, the spectrum $S$ turned out to live much higher than $MU$ in the Bousfield hierarchy of spectra.
	
	\subsection{Large scale structure of the proof}
	The intuitive reason for why the proposed proof structure from the previous section did not yield an actual proof is because the leap from $S$ to $MU$ is too large. Instead, we resort to  an infinite family of spectra $X(n)$ with the intention to interpolate between $S$ and $MU$. These spectra have the property that $X(1)=S$ and $X(\infty)=MU$ so they subdivide the giant leap into infinitely many smaller steps from $X(n+1)$ to $X(n)$.
	
	\vspace{1em}
	
	We will be performing ``downwards induction'': provided $X(n+1)_*(\alpha)$ is nilpotent we show that $X(n)_*(\alpha)$ is also nilpotent. This lets us eventually descend from $X(\infty)_*(R) = MU_*(R)$ to $X(1)_*(R)=\pi_*(R)$ and thus show that the original element $\alpha \in \pi_*(R)$ is nilpotent.	
	
	\vspace{1em}
		
	The inductive step is the challenging part of the proof. It is performed $p$-locally, one prime at a time. To descend from $X(n+1)$ to $X(n)$ we make transfers from $X(n+1)_{(p)}$ to $X(n)_{(p)}$ for each prime $p$. To that end we have constructed a further infinite family of spectra $G_j$ with the property that $G_0=X(n)_{(p)}$ and $G_\infty=X(n+1)_{(p)}$ and whose purpose is to interpolate between $X(n)_{(p)}$ and $X(n+1)_{(p)}$. To make this more precise, we state two crucial properties they possess.
	\begin{enumerate}[label=]
		\item {\bf Step I}: Let $\alpha \in \ker h$. If ${X(n+1)_{(p)}}_*(\alpha)$ is nilpotent, then $G_j \land \alpha^{-1}R$ is contractible for sufficiently large $j$.
		
		This step describes how the spectra $G_j$ approximate $X(n+1)_{(p)}$.
		
		\item {\bf Step II}: Show that $\langle G_j \rangle = \langle G_{j+1} \rangle$ for each $j \in \N_0$.
		
		In particular, this means that $\langle G_j \rangle = \langle X(n)_{(p)} \rangle$ so this step describes how the spectra $G_j$ approximate $X(n)_{(p)}$.
	\end{enumerate}

	\begin{remark}
		Notice the resemblance between this and the attempt in Section \ref{earlyAttempts}: the same idea is replicated on a smaller scale. Whereas we originally hoped to resolve the problem (\emph{i.e.} pass from $S$ to $MU$) using these two steps, they will now be used to complete the inductive step (\emph{i.e.} pass from $X(n+1)$ to $X(n)$).
	\end{remark}
	
	Most of the content of the nilpotence theorem is in the proof of these two properties of the interpolating spectra $G_j$. Once they are established, the inductive step can be completed by an elementary observation reminiscent of the one given in \ref{earlyAttempts}. It will be written out in full in section \ref{conclusion}.
	
	\begin{remark}
		In the rest of the essay, we deal almost exclusively with $p$-local spectra and we omit the subscript $_{(p)}$ for legibility.
	\end{remark}
	
	\begin{remark}
		We have chosen the inductive step to be $n+1 \to n$ instead of a seemingly more natural $n \to n-1$ to maintain consistency with the literature and enhance the legibility of the essay.
	\end{remark}
	
	\section{Proof}
	For the entire proof we fix $\alpha \in \ker h$. Say $\alpha \in \pi_d(R)$. We show that $\alpha$ is nilpotent.
	
	\vspace{1em}
	
	Because $\alpha$ is nilpotent iff $\alpha^m$ is nilpotent one may without loss of generality replace $\alpha$ by any of its powers $\alpha^m$. This observation will be very convenient at various stages of the proof.
	

\subsection{Preliminary lemma}
We begin the proof with a preliminary lemma in which we exhibit a certain fibration. This fibration is used extensively in Step II of the proof to relate the Bousfield equivalence classes $\langle G_j \rangle$ and $\langle G_{j+1} \rangle$, and the proof of the following lemma is is also required in Step I. 

\begin{lemma}\label{OGfibration}
	There is a fibration $B_{p^j-1} \to B_{p^{j+1}-1} \xrightarrow{q} J_{p-1}S^{2p^jn}$.
\end{lemma}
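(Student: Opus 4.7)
The plan is to construct $q$ as a composition of natural maps using the James--Hopf construction, and to identify the homotopy fibre by a pullback argument iterating two fibrations.

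First I would set up the reduction. By definition $B_k$ is the pullback of the fibration $\Omega SU(n) \to \Omega SU(n+1) \xrightarrow{\Omega e} \Omega S^{2n+1} \simeq JS^{2n}$ along the inclusion $J_k S^{2n} \hookrightarrow JS^{2n}$, and in particular the inclusion $B_{p^j-1} \hookrightarrow B_{p^{j+1}-1}$ is the pullback of $J_{p^j-1}S^{2n} \hookrightarrow J_{p^{j+1}-1}S^{2n}$. By transitivity of pullbacks, it suffices to produce a fibration
$$ J_{p^j-1}S^{2n} \to J_{p^{j+1}-1}S^{2n} \to J_{p-1}S^{2p^jn}; $$
pulling back the $\Omega SU(n)$-fibration over $JS^{2n}$ along its total space then yields the desired fibration with fibre $B_{p^j-1}$ by construction, and $q$ is the composition $B_{p^{j+1}-1} \to J_{p^{j+1}-1}S^{2n} \to J_{p-1}S^{2p^jn}$.

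To construct the projection $J_{p^{j+1}-1}S^{2n} \to J_{p-1}S^{2p^jn}$ I would use the James--Hopf map $h_{p^j}\colon JS^{2n} \to \Omega S^{2p^jn+1} \simeq JS^{2p^jn}$ of Definition \ref{JamesHopf}. The underlying fibration I want to exploit is the classical $p$-local James--Hopf fibre sequence $J_{p-1}S^{2m} \to \Omega S^{2m+1} \xrightarrow{h_p} \Omega S^{2mp+1}$, applied at $m = p^jn$ and pulled back along an appropriate iterate. The crucial arithmetic $p^{j+1}-1 = p\cdot p^j - 1$ ensures that the $(p^{j+1}-1)$-th James filtration of $S^{2n}$ is precisely the preimage of the $(p-1)$-th James filtration of $S^{2p^jn}$ under the relevant Hopf map, so the restriction does land in the advertised target. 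The homological computation from Lemma \ref{homologyofBk} already confirms that the resulting ranks match: $H_*(B_{p^j-1}) \otimes H_*(J_{p-1}S^{2p^jn})$ has exactly the right size to be the $E_2$-page of the Serre spectral sequence of a fibration with total space $B_{p^{j+1}-1}$.

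The main obstacle is producing the \emph{skeletal} fibration $J_{p^j-1}S^{2n} \to J_{p^{j+1}-1}S^{2n} \to J_{p-1}S^{2p^jn}$ between genuine James filtration stages rather than between the total $\Omega \Sigma$-constructions. This requires more than citing James' fibration verbatim: one needs the James--Hopf map to be well-behaved with respect to the James filtration, and then a geometric argument (likely through a quasifibration in a convenient model of $JS^{2n}$) showing that restriction to the $(p^{j+1}-1)$-th stage inherits the fibration structure with the expected fibre. Once this is established, the pullback argument of the first paragraph formally produces the required fibration, and the projection onto $J_{p-1}S^{2p^jn}$ is, by construction, the desired map $q$.
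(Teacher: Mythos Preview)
Your reduction in the first paragraph is sound: by transitivity of pullbacks, a fibration $J_{p^j-1}S^{2n} \to J_{p^{j+1}-1}S^{2n} \to J_{p-1}S^{2p^jn}$ between partial James constructions would indeed pull back along $\Omega e$ to the desired fibration on the $B_k$'s. The problem is that this ``skeletal'' fibration you ask for is the entire content of the lemma, and you do not prove it. You correctly flag it as the main obstacle and then gesture at a quasifibration argument and a filtration-compatibility statement for the James--Hopf map, but neither is supplied; in particular, the claim that $J_{p^{j+1}-1}S^{2n}$ is literally the preimage of $J_{p-1}S^{2p^jn}$ under a combinatorial $h_{p^j}$ needs justification, and even granting it one still has to check the resulting map is a fibration (or quasifibration) with the right fibre. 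The homological rank count you mention is necessary but far from sufficient.

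The paper avoids the skeletal problem altogether by never working with maps between partial James constructions. Instead it stays at the level of full loop spaces, where the James--Hopf maps are defined without ambiguity, and uses a Serre spectral sequence argument to show that $J_{p^j-1}S^{2n} \to \Omega S^{2n+1} \xrightarrow{H} \Omega S^{2p^jn+1}$ is a homotopy fibre sequence (the key input being that $H$ is surjective on $\F_p$-homology, forcing the transgression to vanish). Pulling back along $\Omega e$ gives $B_{p^j-1} \to \Omega SU(n+1) \xrightarrow{h} \Omega S^{2p^jn+1}$, and \emph{then} one pulls this back along the inclusion $J_{p-1}S^{2p^jn} \hookrightarrow \Omega S^{2p^jn+1}$. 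The total space of this last pullback is identified with $B_{p^{j+1}-1}$ by running the same spectral sequence argument once more with the next James--Hopf map $H'\colon \Omega S^{2p^jn+1} \to \Omega S^{2p^{j+1}n+1}$. In short: the paper replaces your delicate filtration-level geometry by two routine Serre spectral sequence computations on full loop spaces, and only passes to partial James constructions as homotopy fibres at the very end. If you want to salvage your approach, that spectral sequence identification of $J_{k-1}S^{2m}$ as the fibre of the $k$-th James--Hopf map is exactly the missing ingredient.
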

We construct this fibration as a pullback of another fibration
$$B_{p^j-1} \to \Omega SU(n+1) \xrightarrow{h} \Omega S^{2p^jn+1}$$
along the inclusion map $J_{p-1}S^{2p^jn} \hookrightarrow JS^{2p^jn} \simeq \Omega S^{2p^jn+1}$. This may not be the most direct way of establishing the lemma, but both of these fibrations together with their relationship are important later in the proof.

\begin{proof}
	Recall the following commutative diagram from Section 2.5 obtained by looping the fibration $SU(n) \to SU(n+1) \xrightarrow{e} S^{2n+1}$ and taking its pullback.
	$$
	\begin{tikzcd}
	\Omega SU(n) \arrow[r, equal, "\id"] \arrow[d] & \Omega SU(n) \arrow[d]\\
	B_k = i^* \Omega SU(n+1) \arrow[d] \arrow[r, "i_{-1,0}"] & \Omega SU(n+1) \arrow[d, "\Omega e"]\\
	J_kS^{2n} \arrow[r, hook, "i"] & JS^{2n} \simeq \Omega S^{2n+1}.  
	\end{tikzcd}
	$$		
	This construction was used to define the spaces $B_k$. Specializing to the case $k=p^j-1$, we now extend the bottom two rows to homotopy fibre sequences.
	
	\vspace{1em}
	
	Let us start with the bottom row. We claim that there is a homotopy fibre sequence $J_{p^j-1}S^{2n} \to \Omega S^{2n+1} \xrightarrow{H} \Omega S^{2np^j+1}$ where $H$ is the James-Hopf map defined in \ref{JamesHopf} by taking $X=S^{2n}$. To see this, consider the homological Serre spectral sequence with coefficients in $\F_p$ associated to $H$. Both the total space and the base are of the type $\Omega S^m$ and their homology is
	$$H_i(\Omega S^m ; \F_p) \cong \begin{cases}
	\F_p & i \in \{ 0, \ m-1, \ 2(m-1), \dots \}\\
	0 & \text{otherwise}
	\end{cases}$$
	which is obtained by another standard application of the Serre spectral sequence. Let $F$ be the homotopy fibre of $H$. For purely geometrical reasons there are no differentials entering any of the groups $E_{0,q}^r = H_q(F; \F_p)$ for $q \leq 2p^jn-2$. Thus the non-trivial homology groups of $F$ in this range are $H_q(F;\F_p) \cong \F_p$ for $q \in \{ 0, 2n, \dots, 2n(p^j-1) \}$. The first potentially non-trivial differential is the transgression $E_{2p^jn,0}^{2p^jn} \to E_{2p^jn-1,0}^{2p^jn}$. However, recall that the James-Hopf map $H$ induces a surjection on (mod $p$) homology as discussed in Section 2.3. By naturality of the Serre spectral sequence, this surjection factorises as 
	$$H_{2p^jn}(\Omega S^{2n+1}; \F_p) \to E_{2p^jn,0}^\infty \to H_{2p^jn}(\Omega S^{2p^jn+1};\F_p)$$ through $E_{2p^jn,0}^\infty$ and so $E_{2p^jn,0}^\infty \cong \F_p$. It follows that the transgression differential is the zero map too.
	
	By dualizing, considering the Serre spectral sequence for cohomology and using its multiplicative structure we deduce that all differentials in the spectral sequence are the zero maps. This completely determines the homology and cohomology groups of $F$ and these are precisely those of $J_{p^j-1}S^{2n}$. Therefore
	$$ J_{p^j-1}S^{2n} \xrightarrow{i} \Omega S^{2n+1} \xrightarrow{H} \Omega S^{2p^jn+1} $$
	is a homotopy fibre sequence and hence so is
	$$B_{p^j-1} \xrightarrow{i_{-1,0}} \Omega SU(n+1) \xrightarrow{h:= H \circ \Omega e} \Omega S^{2p^jn+1}$$
	because the lower part of the diagram above is a pullback square. The entire construction is best described with the following diagram:
	$$
	\begin{tikzcd}
	\Omega SU(n) \arrow[r, equal, "\id"] \arrow[d] & \Omega SU(n) \arrow[d]&\\
	B_{p^j-1} \arrow[d] \arrow[r, "i_{-1,0}"] & \Omega SU(n+1) \arrow[d, "\Omega e"] \arrow[r, "H \circ \Omega e"] & \Omega S^{2p^jn+1} \arrow[d, equal, "\id"]\\
	J_{p^j-1}S^{2n} \arrow[r, hook] & JS^{2n} \simeq \Omega S^{2n+1} \arrow[r, "H"]& \Omega S^{2p^jn+1}.  
	\end{tikzcd}
	$$

	
	We have shown that the middle row of this diagram is a fibration. The fibration in the statement of this lemma will be constructed by taking its pullback along $J_{p-1}S^{2p^jn} \hookrightarrow JS^{2p^jn} \simeq \Omega S^{2p^jn+1}$. 
	
	\vspace{1em}
	
	The rest of the argument has a very similar character to what we have already seen. In fact, this is more than a coincidental similarity; we literally apply the same construction to the diagram
	$$
	\begin{tikzcd}
	B_{p^j-1} \arrow[r, equal, "\id"] \arrow[d]& B_{p^j-1} \arrow[d]\\
	F \arrow[d] \arrow[r]& \Omega SU(n+1) \arrow[d, "h"]\\
	J_{p-1}S^{2p^jn} \arrow[r]& \Omega S^{2p^jn+1}   
	\end{tikzcd}
	$$
	where $F$ is now the pullback that we wish to describe more explicitly.
	
	\vspace{1 em}
	
	Let $H': \Omega S^{2p^jn+1} \to \Omega S^{2p^{j+1}n+1}$ be a James-Hopf map. It induces a surjection on (mod $p$) homology and by an analogous analysis of the Serre spectral sequence for (mod $p$) homology and cohomology we obtain that
	$$ J_{p-1}S^{2p^jn} \to \Omega S^{2p^jn+1} \xrightarrow{H'} \Omega S^{2p^{j+1}n+1}$$
	is a homotopy fibre sequence. As earlier because $F$ is the pullback
	$$ F \to \Omega SU(n+1) \xrightarrow{H' \circ H \circ \Omega e} \Omega S^{2p^{j+1}n+1} $$
	is a homotopy fibre sequence too. So it can be identified with the fibration
	$$ B_{p^{j+1}-1} \to \Omega SU(n+1) \to \Omega S^{2p^{j+1}n+1}$$
	obtained in the first part of this argument. The fibration
	$$ B_{p^j-1} \to B_{p^{j+1}-1} \xrightarrow{q} J_{p-1}S^{2p^jn}$$
	follows as required.
\end{proof}
\begin{remark}
	This remark is best read once one is roughly comfortable with the proof of the nilpotence theorem. We have not only constructed the fibration required by Lemma \ref{OGfibration}. In addition, we have exhibited a larger diagram
	$$
	\begin{tikzcd}
	B_{p^j-1} \arrow[r, equal, "\id"] \arrow[d] & B_{p^j-1} \arrow[d]\\
	B_{p^{j+1}-1} \arrow[d, "q"] \arrow[r]& \Omega SU(n+1) \arrow[d, "h"]\\
	J_{p-1}S^{2p^jn} \arrow[r]& JS^{2p^jn}   
	\end{tikzcd}
	$$
	in which the base spaces are the James construction and the partial James construction. Let us share some informal intuition about the role of this diagram in the proof.
	
	I like to think of the spectra $X(n) = G_0, G_1, \dots, G_\infty = X(n+1)$ as points in a topological space homeomorphic to a $1$-point compactification of $\N$. Taking the Bousfield equivalence classes is then a function $\langle \cdot \rangle$ into a totally ordered discrete set. This function is locally constant since $\langle G_j \rangle = \langle G_{j+1} \rangle$ (this is proven in Step II), but it is not globally constant since $\langle G_j \rangle < \langle G_\infty \rangle$. The point of discontinuity is represented by the digram above.
	
	The claim that $\langle G_j \rangle = \langle G_{j+1} \rangle$ is proven using induction by considering the James filtration of the partial James construction $J_{p-1}S^{2p^jn}$. This fails for $JS^{2p^jn}$ since the James filtration contains infinitely many terms and induction can no longer be applied. As a result of this failure we have $\langle X(n) \rangle < \langle X(n+1) \rangle$.
\end{remark} 

\subsection{Step I}
	\begin{proposition}\label{StepI}
		Let $X(n+1)_*(\alpha)$ be nilpotent. Then $G_j \land \alpha^{-1}R$ is contractible for sufficiently large $j$.
	\end{proposition}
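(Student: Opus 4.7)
The strategy is to show, via an $X(n+1)$-based Adams spectral sequence for $G_j \land R$, that $\alpha$ acts locally nilpotently on $\pi_*(G_j \land R)$ once $j$ is sufficiently large, whence the telescope $G_j \land \alpha^{-1}R$ has vanishing homotopy and is contractible. As a preliminary reduction, since nilpotence is insensitive to replacing $\alpha$ by a power and the hypothesis $X(n+1)_*(\alpha)$ nilpotent gives $X(n+1)_*(\alpha^m) = 0$ for some $m$, I would replace $\alpha$ by $\alpha^m$ to assume $X(n+1)_*(\alpha) = 0$ at the outset. Equivalently, the composite $S^d \xrightarrow{\alpha} R \to X(n+1) \land R$ is null-homotopic, so $\alpha$ factors through the first stage of any $X(n+1)$-based Adams tower for $R$; by multiplicativity of Adams filtration, $\alpha^k$ then has $X(n+1)$-Adams filtration at least $k$ for every $k$.

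Next, I would construct an $X(n+1)$-based Adams resolution for $G_j \land R$ and consider its Adams spectral sequence. By Theorem \ref{convergence} it converges to $\pi_*(G_j \land R)$, using that $R$ is connective of finite type and $G_j$ is of finite type as a $p$-local spectrum; in particular, each homotopy class has finite Adams filtration. The right $R$-module structure of $G_j \land R$ makes $\alpha$ act on this spectral sequence, and the preliminary reduction shows that $\alpha$, having filtration at least $1$ and internal degree $d$, shifts bidegree by $(1, 1+d)$ per factor, so $\alpha^N$ shifts by $(N, N(1+d))$.

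The main obstacle, and the technical heart of the step, is to secure a vanishing line $E_\infty^{s,t} = 0$ for $s > ct + C$ with slope $c < 1/(1+d)$, provided $j$ is large enough. This is where one must exploit the structural relation between $G_j$ and $X(n+1)$ recorded in Lemma \ref{homologyofBk}: $H_*(G_j)$ is a free $H_*(X(n))$-module of rank $p^j$ with explicit generators, and for $j$ large the resulting $X(n+1)$-homological complexity of $G_j \land R$ can be controlled finely enough to yield such a slope. Granting the vanishing line, any $\beta \in \pi_*(G_j \land R)$ of bidegree $(s_\beta, t_\beta)$ satisfies $s_\beta + N > c(t_\beta + N(1+d)) + C$ for all sufficiently large $N$, so the iterated action $\alpha^N \beta$ lies past the vanishing line and must vanish. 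Hence $\alpha$ acts locally nilpotently on $\pi_*(G_j \land R)$, the colimit $\pi_*(G_j \land \alpha^{-1}R) = \mathrm{colim}_k\, \pi_*(\Sigma^{-kd} G_j \land R)$ vanishes, and $G_j \land \alpha^{-1}R$ is contractible, as required.
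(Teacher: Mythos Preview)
Your outline matches the paper's: reduce to $X(n+1)_*(\alpha)=0$, run an $X(n+1)$-based Adams spectral sequence for $G_j\land R$, use that $\alpha$ raises filtration by $1$ per factor, and finish by pushing iterates of $\alpha$ past a vanishing line of sufficiently small slope. The endgame (locally nilpotent action $\Rightarrow$ telescope contractible) is also exactly what the paper does.

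The genuine gap is the vanishing line itself. You flag it as ``the technical heart of the step'' and then wave at Lemma~\ref{homologyofBk}, but that lemma records the $H_*(X(n))$-module structure of $H_*(G_j)$, which is the wrong relationship: what you need is control of $G_j$ relative to $X(n+1)$, not $X(n)$. The paper obtains the vanishing line by a completely different route. It builds, by hand, a non-canonical $X(n+1)$-based Adams resolution of $G_j$ whose zeroth map is the Thomification of the fibration $B_{p^j-1}\to\Omega SU(n+1)\xrightarrow{h}\Omega S^{2p^jn+1}$ of Lemma~\ref{OGfibration}, and whose higher stages are Thom spectra of explicit cofibre sequences of spaces constructed from products of $\Omega S^{2p^jn+1}$ and $\Omega SU(n+1)$. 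A homology computation then gives $\widetilde H_*(K_s;\F_p)\cong \widetilde H_*\big((\Omega S^{2p^jn+1})^{\land s};\F_p\big)\otimes H_*(\Omega SU(n+1);\F_p)$, so $K_s$ is $(2p^jn-1)s$-connected. Together with connectivity of $R$, this forces $E_1^{s,t}(G_j\land R)=\pi_{t-s}(K_s\land R)=0$ for $t-s\le (2p^jn-1)s+N$, a vanishing line of slope $\tfrac{1}{2p^jn-1}\to 0$ as $j\to\infty$; choosing $j$ so that this slope is below $\tfrac{1}{d}$ finishes exactly as you describe.

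In short, your strategy is right, but the input you name does not produce the needed vanishing line, and the missing construction is the entire content of the proposition. To complete the argument you must either reproduce the explicit geometric resolution and its connectivity estimate, or supply some other mechanism that yields an $E_r$-page vanishing line of slope tending to $0$ with $j$; nothing in Lemma~\ref{homologyofBk} alone gives that.
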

	In this section, we prove the first crucial ingredient of the proof. We have mentioned that the purpose of Steps I and II is to descend from $X(n+1)$ to $X(n)$. A sequence of intermediate spectra $G_j$ was constructed to interpolate between the two. Proceeding in this spirit, we can describe the content of the Proposition \ref{StepI} as:
	\begin{center}
		\ref{StepI} shows how the $G_j$ approximate $X(n+1)$.
	\end{center}
	Dually, Step II will describe a way in which the $G_j$ also approximate $X(n)$.
	
	\vspace{1 em}
	
	To show that $G_j \land \alpha^{-1}R$ is contractible, it is sufficient to establish that its homotopy groups vanish and this is what we shall do. This is most naturally done by studying suitable Adams spectral sequences for $\pi_*(G_j)$, $\pi_*(R)$ and thus for $\pi_*(G_j \land R)$.
	
	The proof presented in this essay has a geometric flavour. Following Ravenel \cite[Section 9.2]{ravenel1992nilpotence}
	we construct the Adams resolution for $G_j$ through explicit cobfibre sequences of spaces. Studying their connectivity allows us to establish the existence of vanishing lines with arbitrarily small slopes in the Adams spectral sequence for $\pi_*(G_j)$. Using the connectivity of $R$, the conclusion follows quickly thereafter.
	\begin{proof} 
		We follow this list of steps.
		\begin{enumerate}[itemsep=0pt, label=$\textbf{-- \arabic* --}$]
			\item Construct a non-canonical Adams resolution for $G_j$ based on $X(n+1)$ of the form
			$$\begin{tikzcd}
			G_j = X_0 \arrow[d, "f_0"] & X_1 \arrow[d, "f_1"] \arrow[l] & X_2 \arrow[d, "f_2"] \arrow[l] & \cdots \arrow[l]\\
			K_0 & K_1 & K_2
			\end{tikzcd}.$$
			\item Show that $K_s$ is $(2p^jn-1)s$-connected.
			\item Using the first two points and the connectivity of $R$, establish vanishing lines with slopes $\frac{1}{2p^jn-1}$ in the Adams spectral sequence for $\pi_*(G_j \land R)$.
			\item Conclude that $G_j \land \alpha^{-1}R$ is contractible.
		\end{enumerate}
		\begin{center}
			\textbf{-- 1 --}
		\end{center}
		We begin with an inductive construction of a non-canonical Adams resolution for $G_j$ based on $X(n+1)$. Let it be of the form
		$$\begin{tikzcd}
		G_j = X_0 \arrow[d, "f_0"] & X_1 \arrow[d, "f_1"] \arrow[l] & X_2 \arrow[d, "f_2"] \arrow[l] & \cdots \arrow[l]\\
		K_0 & K_1 & K_2
		\end{tikzcd}.$$
		Recall the fibration $B_{p^j-1} \to \Omega SU(n+1) \xrightarrow{h} \Omega S^{2p^jn+1}$ constructed in Lemma \ref{OGfibration}. Thomification yields a map of spectra $f_0: G_j \to X(n+1)$ and let this be the map $f_0$ in the Adams resolution. This defines $X_1$ as the cofibre of $f_0$ and starts the induction. The maps $f_s$ for $s \geq 1$ will be constructed by first exhibiting cofibre sequences of spaces
		$$ Y_s \to L_s \to Y_{s+1}$$
		which are used to obtain fibre sequences of spectra
		$$ X_{s+1} \to X_s \xrightarrow{f_s} K_s$$
		after Thomifying and taking suitable suspensions.
		
		\vspace{1em}
		
		To motivate the next definition, we remark that the maps $i_{s, t}$ for suitable $s$ and $t$ will be used to define the cofibrations $Y_s \to L_s$ we aim to construct. Let $B=\Omega S^{2p^jn+1}$ and $* \in B$ be a basepoint. For any $s \in \N_0$ define $H_s = * \times B \times \dots \times B \times \Omega SU(n+1)$ where there are $s$ factors of $B$. For $t \in \{  0, \dots, s+1  \}$ define maps $i_{s, t}: H_{s} \to H_{s+1}$ by
		$$ i_{s,t}(*,b_1, \dots, b_s, e) =
		\begin{cases}
		(*,*, b_1, \dots, b_s, e) & | \quad t=0\\
		(*,b_1, \dots, b_t, b_t, b_{t+1}, \dots, b_s, e) & | \quad t \in \{ 1, \dots, s \} \\
		(*,b_1, \dots, b_s, h(e), e) & | \quad t = s+1.
		\end{cases} $$
		Less rigorously, but more intuitively, one can think of the maps $i_{s,t}$ as trying to double the coordinate $b_t$. This can mostly be done, but there is a problem when $t=s+1$, because the $b_{s+1}$ does not exist. A slight modification leads to the definition of the maps $i_{s,t}$.
		
		Now define the spaces
		\begin{align*}
		Y_s &= \frac{H_{s-1}}{\im i_{s-2,0} \cup \dots \cup \im i_{s-2, s-1}}\\
		L_s &= \frac{H_s}{\im i_{s-1, 0} \cup \dots \cup \im i_{s-1, s-1}}.
		\end{align*}
		Instead of trying to decipher the meaning of the indices in the expressions above, it may be easier to use the following intuitive characterisation of the collapsed subspaces.
		\begin{itemize}
			\item In both cases the collapsed subspace contains the points in which some two consecutive coordinates are the same.
		\end{itemize}
		The collapsed subspace in the definition of $L_s$ is completely determined by these two properties. For $Y_s$ we additionally have:
		\begin{itemize}
			\item The collapsed subspace in the definition of $Y_s$ contains the points with $b_{s-1}=h(e)$ \emph{i.e.} points of the form $(*,b_1, \dots, b_{s-2}, h(e), e)$.
		\end{itemize}
		Henceforth, the map $i_{s-1, s}: H_{s-1} \to H_s$ for $s \geq 1$ induces a well-defined map
		\begin{align*}
		Y_s &\to L_s\\
		(*,b_1, \dots, b_{s-1}, e) &\mapsto (*,b_1, \dots, b_{s-1}, h(e), e).
		\end{align*}
		To verify that this is indeed well-defined, we can use the characterisation of the collapsed subspaces. Let $b \in Y_s$ be a point in the collapsed subspace of $Y_s$. If $b$ satisfies the first condition, its image clearly does too. If $b$ satisfies the last condition, then $b_{s-1}=h(e)$ and the image of $b$ has two consecutive coordinates that are the same. Hence in either case the image of $b$ is collapsed in $L_s$ and the map is well defined.
		
		\vspace{1em}
		
		The map $f_s: Y_s \to L_s$ is clearly injective. Its cofibre is 
		$$\frac{L_s}{\im_{s-1,s}} = \frac{H_s}{\im i_{s-1,0} \cup \dots \cup \im i_{s-1, s}}= Y_{s+1}.$$
		Therefore we have constructed cofibre sequences of spaces $Y_s \to L_s \to Y_{s+1}$.
		
		
		\vspace{1em}
		
		The next step is to Thomify these cofibre sequences into cofibre sequences of spectra and thus construct an Adams resolution of $G_j$. To do this, consider the projection maps $p_s: H_s \to \Omega SU(n+1) \hookrightarrow BU$ given via $(b_1, \dots, b_s, e) \mapsto e$ making $H_s$, $Y_s$ and $L_s$ into objects of ${\bf Top}_{BU}$. Moreover, it is clear that the maps $i_{s-1,s}$ and their restrictions $Y_s \to L_s$ are morphisms in ${\bf Top}_{BU}$. Therefore we can Thomify the construction to obtain a cofibre sequence of spectra
		$$ Y_s^{p_{s-1}} \xrightarrow{f_s} L_s^{p_s} \to Y_{s+1}^{p_s}$$
		as required. Letting $K_s=\Sigma^{-s} L_s^{p_s}$ and $X_s=\Sigma^{-s} Y_s^{p_{s-1}}$ we obtain homotopy fibre sequences $X_{s+1} \to X_s \xrightarrow{f_s} K_s$ of spectra and thus a non-canonical Adams resolution for $G_j$ as required.
		\begin{center}
			\textbf{-- 2 --}
		\end{center}
		We have constructed a non-canonical Adams resolution for $G_j$ based on $X(n+1)$, a development corresponding to the first point of the plan we had outlined prior to starting with the proof. There seems to be only one natural continuation: consider the Adams spectral sequence associated to this resolution. Unfortunately, we would not learn much about $G_j$ if we attempted this immediately for the resolution is sufficiently mysterious. Instead, we first establish some connectivity properties of the spectra $K_s$ and then consider the Adams spectral sequence. The connectivity information alone allows us to extract all that we need from the spectral sequence.
		
		\vspace{1 em}
		
		In order to study the connectivity of the spectra $K_s$, we calculate the structure of the graded abelian group $\widetilde{H}_*(L_s ; \F_p)$. Note the homeomorphism $L_1 = \frac{H_1}{i_{0,0}} \cong \frac{* \times B \times \Omega SU(n+1)}{* \times * \times \Omega SU(n+1)} \cong B \land \Omega SU(n+1)_+$ which yields by the Künneth theorem $\widetilde{H}_*(L_1; \F_p) \cong \widetilde{H}_*(B;\F_p) \otimes H_*(\Omega SU(n+1);\F_p)$. We shall now prove a corresponding formula for the homology of any $L_s$. Recall that these are defined defined via the cofibre sequences
		$$  \im i_{s-1, 0} \cup \dots \cup \im i_{s-1, s-1} \to H_s \to L_s  $$ 
		where the subspace $ T_s := \im i_{s-1, 0} \cup \dots \cup \im i_{s-1, s-1}$ has an explicit description
		$$T_s = \{ (b_0,b_1, \dots, b_s, e) \in H_s \ | \ b_i = b_{i+1} \text{ for some } i   \}.$$
		Note that $b_0=*$ by definition of $H_s$. We can write $T_s= \bigcup_{i=0}^{s-1} T^i$ where
		$$T^i = \{ (b_0, b_1, \dots, b_s, e) \in T_s \ | \  b_i = b_{i+1}   \}.$$
		Therefore $T_s$ can be expressed as the union of two spaces $\bigcup_{i=0}^{s-2} T^i$ and $T^{s-1}$ with the following intuitive description.
		\begin{itemize}[itemsep=0pt]
			\item The space $\bigcup_{i=0}^{s-2} T^i$ has two consecutive equal coordinates $b_i = b_{i+1}$ for some $0 \leq i \leq s-2$ and the coordinate $b_s$ can be arbitrary. Therefore $\bigcup_{i=0}^{s-2} T^i \cong T_{s-1} \times B$.
			\item The space $T^{s-1}$ satisfies $b_{s-1}=b_s$ and all other coordinates are arbitrary. So $T^{s-1} \cong H_{s-1}$. Note in particular that the inclusion $\phi_s: H_{s-1} \to T_s$ is given by $(b_0, \dots, b_{s-1}, e)\mapsto (b_0, \dots, b_{s-1}, b_{s-1}, e)$.
			\item Their intersection $\left(\bigcup_{i=0}^{s-2} T^i \right) \cap T^{s-1}$ is homeomorphic to $T_{s-1}$ by combining both of the above descriptions.
		\end{itemize}
		This gives a pushout square	
		$$
		\begin{tikzcd}
		T_{s-1} \arrow[d] \arrow[r] & H_{s-1} \arrow[d, "\phi_s"] \\
		T_{s-1} \times B \arrow[r]& T_s
		\end{tikzcd}
		$$
		and so $L_s = \frac{H_s}{T_s}$ can be obtained from $H_s$ in two steps by first collapsing $T_{s-1} \times B$ and then the rest of $T_s$. Formally, the cofibre of the inclusion $T_{s-1} \times B \to T_s$ is given by $\frac{H_s}{T_{s-1}\times B} \cong \frac{H_{s-1} \times B}{T_{s-1}\times B} \cong \frac{H_{s-1}}{T_{s-1}} \land B_+$. Under this homeomorphism, we consider the cofibre sequence
		$$  \frac{H_{s-1}}{T_{s-1}} \xrightarrow{\phi_s} \frac{H_{s-1}}{T_{s-1}} \land B_+ \to \frac{H_s}{T_s} = L_s  $$
		to collapse the rest. Since $\frac{H_{s-1}}{T_{s-1}} = L_{s-1}$ this will inductively allow us to compute the homology of $L_s$. Note from discussion above that the map $\phi_s$ is the identity on the first factor and it extracts the coordinate $b_{s-1}$ on $B_+$.
		
		\vspace{1em}
		
		As a morphism in $\bf Top$, the map $\phi_s$ is in general \emph{not} homotopic to $\id \land *_+$, \emph{nor} do they induce the same map on homology. However, they only differ by the automorphism
		$$\psi = ({\pi_1}_* - {\pi_2}_* + *_*) \circ (\phi_s \land \id_{B_+})_*$$ of $H_* \left(\frac{H_{s-1}}{T_{s-1}}\land B_+ ; \F_p\right)$ where $\pi_i: B \times B \to B$ for $i \in \{1, 2\}$ are the projection maps and $*:B \times B \to B$ is the constant map $*$. With some abuse of notation, one could say that the automorphism $\psi$ is induced by
		$$\begin{tikzcd}
		\frac{H_{s-1}}{T_{s-1}}\land B_+ \arrow[r, "\phi_s \land \id_{B_+}"] & \frac{H_{s-1}}{T_{s-1}}\land B_+  \land B_+ \arrow[rr, dashed, "\id \land (\pi_1 - \pi_2 + *)"]&& \frac{H_{s-1}}{T_{s-1}}\land B_+
		\end{tikzcd}$$
		where the dashed arrow represents that the second map is not a well-defined map in $\bf Top$ -- it is a sum of maps. However, the expression becomes well-defined upon passing to homology. An elementary calculation
		\begin{align*}
			(\psi \circ (\id \land *_+)_*)(b_0, \dots, b_{s-1},e) &= \psi (b_0, \dots, b_{s-1}, *, e)\\
			&= ((\pi_1)_* - (\pi_2)_* + *_*)(b_0, \dots, b_{s-1}, b_{s-1}, *, e)\\
			&= (b_0, \dots, b_{s-1},b_{s-1}, e)
		\end{align*}
		shows that $\psi \circ (\id \land *_+)_* = (\phi_s)_*$ so $\phi_s$ and $\id \land *_+$ only differ by an automorphism upon passing to homology.
	
		\vspace{1em}
		
		The upshot is that the long exact sequences on homology groups induced by the cofibre sequences associated to $\phi_s$ and $\id \land *_+$ are the same up to an automorphism and the latter is well-understood. Namely, it splits into short exact sequences as
		$$ 0 \to \widetilde{H}_k\left(\frac{H_{s-1}}{T_{s-1}}; \F_p\right) \to \bigoplus_{i+j=k} \widetilde{H}_i\left(\frac{H_{s-1}}{T_{s-1}}; \F_p\right) \otimes \widetilde{H}_j(B_+; \F_p) \to \widetilde{H}_k(L_s; \F_p) \to 0$$
		using the Künneth isomorphism to express the homology of $\frac{H_{s-1}}{T_{s-1}} \land B_+$ as a tensor product. It follows that
		\begin{align*}
			\widetilde{H}_*(L_s; \F_p) &\cong \widetilde{H}_*\left(\frac{H_{s-1}}{T_{s-1}} \land B; \F_p\right) \cong \widetilde{H}_*(L_{s-1}; \F_p)  \otimes \widetilde{H}_*(B; \F_p)\\
			&\cong \widetilde{H}_*(B^{\land {s-1}}; \F_p) \otimes H_*(\Omega SU(n+1) ; \F_p) \otimes \widetilde{H}_*(B; \F_p)\\
			&\cong \widetilde{H}_*(B^{\land s}; \F_p) \otimes H_*(\Omega SU(n+1) ; \F_p)
		\end{align*}
		where we use the induction hypothesis to pass to the middle row and other isomorphisms come from purely algebraic manipulations and applications of the Künneth theorem. This completes the inductive step and it follows that $\widetilde{H}_*(L_s ; \F_p) \cong \widetilde{H}_*(B^{\land s}; \F_p) \otimes  H_*(\Omega SU (n+1); \F_p)$.
		
		\vspace{1em}
		
		We have now expressed the homology of $L_s$ in a convenient form, but ultimately we would like to know about the connectivity of $K_s$. The translation between these results is obtained by a few applications of the Hurewicz's theorem and the Thom isomorphism theorem in the next paragraph.
		
		Recall that $B= \Omega S^{2p^jn+1}$. We have $\pi_i(\Omega S^{2p^jn+1}) \cong \pi_{i+1}(S^{2p^jn+1})$ from the long exact sequence of homotopy groups associated to the path fibration. The sphere $S^{2p^jn+1}$ is $(2p^jn)$-connected and it follows that $\Omega S^{2p^jn+1}$ is $(2p^jn-1)$-connected. By Hurewicz's theorem the reduced homology groups $\widetilde{H}_i(\Omega S^{2p^jn+1};\Z)$ vanish for $i \leq 2p^jn-1$ too. Using the Universal coefficients theorem and the Künneth theorem yet again we obtain 
		$$\widetilde{H}_i(B^{\land s} ; \F_p) \cong \bigoplus_{i_1 + \dots + i_s = i} \widetilde{H}_{i_1}(B ; \F_p) \otimes \dots \otimes  \widetilde{H}_{i_s}(B; \F_p).$$
		Since we are working $p$-locally, the Hurewicz's theorem gives that the space $B^{\land s}$ is $(2p^jn-1)s$-connected. Therefore $L_s$ is $(2p^jn-1)s$-connected too and finally so is $K_s$ by the Thom isomorphism theorem.
		\begin{center}
			\textbf{-- 3 --}
		\end{center}
		With this connectivity information at our disposal, we may now study the Adams spectral sequence. The Adams resolution for $G_j$ can be used to construct a resolution for $G_j \land R$. By smashing every spectrum and map by $R$ we obtain the diagram
		$$\begin{tikzcd}
		G_j \land R= X_0 \land R\arrow[d, "f_0 \land \id_R"] & X_1 \land R\arrow[d, "f_1 \land \id_R"] \arrow[l] & X_2 \land R\arrow[d, "f_2 \land \id_R"] \arrow[l] & \cdots \arrow[l]\\
		K_0 \land R& K_1 \land R& K_2 \land R
		\end{tikzcd}$$
		in which $X_{s+1} \land R \to X_s \land R \to K_s \land R$ is a homotopy fibre sequence for each $s$. This is a non-canonical Adams resolution for $G_j \land R$ based on $X(n+1)$. Each of these cofibre sequences induces a long exact sequence of homotopy groups
		$$ \cdots \to \pi_i(X_{s+1} \land R) \to \pi_i(X_s \land R) \to \pi_i(K_s \land R) \to \cdots $$
		and these constitute an exact couple. The corresponding spectral sequence is the Adams spectral sequence converging to $\pi_*(G_j \land R)$ based on $X(n+1)$.
		
		\vspace{1em}
		
		The $E_1$-page of this spectral sequence has $E_1^{s,t}(G_j \land R) = \pi_{t-s}(K_s \land R)$. Recall that $K_s$ is $(2sp^jn-s)$-connected and by the initial assumption of the nilpotence theorem $R$ is connective \emph{i.e.} $N$-connected for some $N \in \Z$. Combining the two connectivity results we obtain that $G_j \land R$ is $(2sp^jn-s+N)$-connected. Therefore
		$$E_1^{s, t}(G_j \land R) = 0 \quad \text{for} \quad t-s \leq (2p^jn-1)s+N.$$
		It follows that the same is true on the $E_2$-page of the spectral sequence:
		$$E_2^{s, t}(G_j \land R) = 0 \quad \text{for} \quad t-s \leq (2p^jn-1)s+N.$$
		
		We can represent the groups of the $E_2$-page of the spectral sequence as lattice points in the plane. Using the coordinate system with coordinates $(t-s, s)$ the paragraph above shows that the groups $E_2^{s, t}(G_j \land R)$ vanish above the line
		$$ s= \frac{1}{2p^jn-1}(t-s)-\frac{N}{2p^jn-1}$$
		with the slope $\frac{1}{2p^jn-1}$. The crucial property of this family of vanishing lines is that they can be made arbitrarily close to a horizontal line: their slope tends to $0$ as $j \to \infty$. This concludes the third point of the plan.
		\begin{center}
			\textbf{-- 4 --}
		\end{center}
		Notice that the discussion so far was centred around the properties of the spectral sequences of $\pi_*(G_j)$ and $\pi_*(G_j \land R)$ and the element $\alpha \in \pi_d(R)$ has not yet played a role. Since we are ultimately interested in the homotopy groups $\pi_*(G_j \land \alpha^{-1}R)$, let us consider $\alpha$. One of the assumptions of this proposition is that $X(n+1)_*(\alpha)$ is nilpotent. We have justified in $3.3$ that we may without loss of generality at any point replace $\alpha$ by any of its powers $\alpha^m$. We do so at this point to make $X(n+1)_*(\alpha)=0$. Therefore the Adams filtration of $\alpha$ is $1$.
		 
		 \vspace{1em}
		
		The Adams spectral sequence for $R$ based on $X(n+1)$ converges to the homotopy groups of $R$ and $F^1_d/F^2_d \cong E_\infty^{1, d+1}(R)=\cap_{r > 1} E^{1, d+1}_r(R)$. Let $\widehat{\alpha} \in E_2^{1, d+1}(R)$ be an element on the $E_2$-page detecting $\alpha \in F^1_d \subset \pi_d(R)$. Then $\widehat{\alpha}$ lies on the line through the origin with a slope of $\frac{1}{d}$ in the Adams spectral sequence for $\pi_*(R)$. Let us now fix $j \in \N$ sufficiently large so that $\frac{1}{2p^jn-1}< \frac{1}{d}$.

		
		\vspace{1em}

		Moving to the spectral sequence for $\pi_*(G_j \land R)$, let $\beta \in \pi_*(G_j \land R)$ be arbitrary. The map of spectra $R \to G_j \land R$ induces an action of $\pi_*(R)$ on $\pi_*(G_j \land R)$. Let $\beta$ be detected by $\beta' \in E^{u,v}_2(G_j\land R)$ and $\id_{G_j} \land \alpha \in \pi_d(G_j \land R)$ be detected by $\alpha' \in E_2^{1, d+1}(G_j \land R)$. If $\beta \alpha^m \neq 0$, then it is detected by an element
		$$\beta' {\alpha'}^m \in E^{u+m, v+md}_2(G_j \land R).$$		
		For sufficiently large $m$, the point $(u+m, v+md)$ lies above the vanishing line, since the slope of the vanishing line is $\frac{1}{2p^jn-1}< \frac{1}{d}$ by the choice of $j$. It follows that $E_2^{u+m, v+md}(G_j \land R)$ vanishes and in particular $\beta \alpha^m = 0$ in $\pi_*(G_j \land R)$. Therefore $\beta=0$ in $\pi_*(G_j \land \alpha^{-1}R)$. Because $\beta$ was arbitrary it follows that $\pi_*(G_j \land \alpha^{-1}R)=0$ and so $G_j \land \alpha^{-1}R$ is contractible  for sufficiently large $j$ as required.
	\end{proof}
	\begin{remark}
		The only reason for passing to the $E_2$-page of the spectral sequence in this argument is that the $E_2$-page and all subsequent pages are independent of the choice of the Adams resolution. This simplifies the exposition linguistically: we can refer to `the $E_2$-page' instead of to `the $E_1$-page associated to our non-canonical Adams resolution'.
	\end{remark}
	\begin{remark}
		The nilpotence theorem can be rephrased by saying that the $E_\infty$-page of the Adams-Novikov spectral sequence for $\pi_*(R)$ (based on $X(n)$ or $MU$) contains vanishing lines of arbitrarily small slopes. It then follows exactly as in the above proof that every $\alpha \in \pi_*(R)$ detected by the Hurewicz homomorphism is nilpotent.
		
		This is why we emphasize that the above proof gives vanishing lines on the $E_2$-page of the spectral sequence for $\pi_*(G_j \land R)$, but it says nothing about the $E_2$-page of the spectral sequence for $\pi_*(R)$. Indeed, in this spectral sequence such vanishing lines \emph{only} exist at the $E_\infty$-page and it is very hard to show that they do. An entire Step II is dedicated to establishing the machinery powerful enough to leverage the information from $\pi_*(G_j \land R)$ to $\pi_*(R)$.
	\end{remark}

\subsection{Step II}
	\begin{proposition}\label{G_j=G_j+1}
		$\langle G_j \rangle = \langle G_{j+1} \rangle$ for any $j$.
	\end{proposition}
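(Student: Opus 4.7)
My plan is to interpolate between $G_j$ and $G_{j+1}$ by pulling the fibration of Lemma \ref{OGfibration} back along the James filtration of its base. For each $0 \le k \le p-1$, let $B^{(k)}$ be the pullback of $q: B_{p^{j+1}-1}\to J_{p-1}S^{2p^jn}$ along the inclusion $J_kS^{2p^jn}\hookrightarrow J_{p-1}S^{2p^jn}$. Then $B^{(0)} = B_{p^j-1}$ (since $J_0S^{2p^jn} = *$) and $B^{(p-1)} = B_{p^{j+1}-1}$; composing with the canonical map to $BU$, Thomifying and $p$-localising then produces a finite tower of $X(n)_{(p)}$-module spectra
$$ G_j = Y_0 \longrightarrow Y_1 \longrightarrow \cdots \longrightarrow Y_{p-1} = G_{j+1}. $$

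The next step is to identify the successive cofibres. Because $J_kS^{2p^jn}/J_{k-1}S^{2p^jn} \simeq S^{2p^jnk}$, the relative pair $B^{(k)}/B^{(k-1)}$ is a $B_{p^j-1}$-bundle over the based sphere $S^{2p^jnk}$. I would argue that $p$-locally the Thom spectrum of this bundle is equivalent to $\Sigma^{2p^jnk}G_j$, yielding cofibre sequences
$$ Y_{k-1} \longrightarrow Y_k \longrightarrow \Sigma^{2p^jnk} G_j. $$
This is the geometric heart of the matter: one must show that, after Thomification and $p$-localisation, the bundle trivialises. The restriction $k \le p-1$ enters precisely here — the obstructions to triviality are controlled by the attaching maps in the James filtration of $\Omega S^{2p^jn+1}$, and for $k < p$ they can be chased into groups that become null after inverting all primes other than $p$.

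The induction on $k$ is then formal. Smashing each of the cofibre sequences above with a test spectrum $W$ produces a cofibre sequence in ${\bf hSp}$, and the long exact sequence on homotopy groups gives the usual two-out-of-three property for contractibility. Hence the assumption $\langle Y_{k-1}\rangle = \langle G_j\rangle$ implies, via both the original and the rotated cofibre triangle, that $\langle Y_k\rangle = \langle G_j\rangle$. Iterating from $k=1$ to $k=p-1$ yields $\langle G_{j+1}\rangle = \langle G_j\rangle$.

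The main obstacle is exactly the identification of the cofibres as suspensions of $G_j$ — the $p$-local triviality of each relative $B_{p^j-1}$-bundle over $S^{2p^jnk}$ after Thomification. Conceptually this is the reason Lemma \ref{OGfibration} is formulated with base $J_{p-1}S^{2p^jn}$ rather than $JS^{2p^jn}$: extending past $J_{p-1}$ would introduce a genuinely new Bousfield class, accounting for the strict inequality $\langle X(n)\rangle < \langle X(n+1)\rangle$ hinted at in the remark following that lemma.
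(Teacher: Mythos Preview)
Your filtration $G_j = Y_0 \to \cdots \to Y_{p-1} = G_{j+1}$ with cofibres $\Sigma^{2p^jnk}G_j$ is exactly what the paper establishes (these are the equivalences $\theta_k$ of Lemma~\ref{cofibre}(ii)), and your cofibre-sequence induction correctly proves one inequality: if $W \land G_j \simeq *$ then inductively $W \land Y_k \simeq *$ for all $k$, so $\langle G_j \rangle \geq \langle G_{j+1} \rangle$. But the reverse inequality does \emph{not} follow from these cofibre sequences, and your ``two-out-of-three via the rotated triangle'' claim fails in that direction. From a cofibre triangle $A \to B \to C$ one only gets $\langle B \rangle \leq \langle A \rangle \vee \langle C \rangle$; with $\langle A \rangle = \langle C \rangle = \langle G_j \rangle$ this yields $\langle Y_k \rangle \leq \langle G_j \rangle$, while the rotated triangle gives the vacuous $\langle G_j \rangle \leq \langle Y_k \rangle \vee \langle G_j \rangle$. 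Concretely, if $W \land Y_k \simeq *$, the cofibre sequence only says $W \land Y_{k-1} \simeq W \land \Sigma^{2p^jnk-1}G_j$; since both sides lie in the class $\langle G_j \rangle$ this is circular and tells you nothing about whether $W \land G_j$ vanishes.

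The paper closes this gap by building a self-map $b: \Sigma^{2mp-2}G_j \to G_j$ out of the connecting maps of precisely your cofibre sequences, and proving $\langle G_j \rangle = \langle G_{j+1} \rangle \vee \langle b^{-1}G_j \rangle$. The key is that $b$ is a composite $\delta_2 \circ \theta_1^{-1} \circ \delta_1 \circ \theta_{p-1}^{-1}$ in which the cofibres of $\delta_1$ and $\delta_2$ are suspensions of $G_{j+1}$: once $W \land G_{j+1} \simeq *$, the map $\id_W \land b$ becomes an equivalence, whence $W \land G_j \simeq W \land b^{-1}G_j$. One then proves $b^{-1}G_j \simeq *$ by factoring $b$ through $\Sigma^\infty\Omega^2 S^{2m+1}_+ \land G_j$, invoking Snaith's splitting and Mahowald's identification of the resulting telescope with $H\F_p$, together with the elementary observation ${H\F_p}_*(b)=0$. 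This telescope argument is the genuine content of Step~II --- the paper explicitly remarks that no alternative proof of it is known --- and nothing in your outline substitutes for it.
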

	In this section we prove the second crucial proposition. Since $G_0=X(n)$, the claim can be rephrased by saying that $\langle G_j \rangle = \langle X(n) \rangle$ for all $j$. This completes our intuitive description of the inductive step:
	\begin{center}
		\ref{G_j=G_j+1} shows how the $G_j$ approximate $X(n)$.
	\end{center}

	This is the deepest, conceptually the most complex part of the nilpotence theorem. It involves a lot of technically detailed calculations with very specific spectra. In order to facilitate the understanding of the proof, we outline the main intermediate steps before delving into rigour. We follow the plan:
	\begin{enumerate}
		\item Define a map $b: G_j \to \Sigma^{-|b|}G_j$.
		\item Prove that $\langle G_j \rangle  = \langle G_{j+1} \rangle \vee \langle b^{-1}G_j \rangle$.
		\item Prove that $b^{-1}G_j$ is contractible.
	\end{enumerate}
	Proposition \ref{G_j=G_j+1} follows immediately from these three claims, each of which is involved by itself. To give an overview of the large scale proof structure we propose the following substeps:
	\begin{enumerate}
		\item Define a map $b: G_j \to \Sigma^{-|b|}G_j$.
		\subitem (i) Construct a filtration $B_{p^j-1} = E_0  \subset \dots \subset E_{p-1} = B_{p^{j+1}-1}$.
		\subitem (ii) Establish equivalences $\theta_k: E_k^\xi / E_{k-1}^\xi \to \Sigma^{2mk} E_0^\xi$.
		\subitem (iii) Use $\theta_k$ and the associated cofibre sequences to define $b$.
		\item Prove that $\langle G_j \rangle  = \langle G_{j+1} \rangle \vee \langle b^{-1}G_j \rangle$.
		\item Prove that $b^{-1}G_j$ is contractible.
		\subitem (i) Establish the factorization of $b$ via
		$$
		\begin{tikzcd}
		\Sigma^{2mp-2}G_j \arrow[d, equal, "\id"] \arrow[r, "b"] & G_j \\
		S^{2mp-2} \land G_j \arrow[r, "\gamma \land \id"]& \Sigma^\infty \Omega^2 S^{2m+1}_+ \land G_j \arrow[u, "\mu"]
		\end{tikzcd}
		$$
		\subitem (ii) Using the properties of the Snaith's splitting structure of $\Omega^2 S^{2m+1}_+$ and the diagram in (i) show that $\id_{b^{-1} G_j}$ factorizes as $b^{-1}G_j \to H\F_p \land G_j \to b^{-1} G_j$.
		
		\subitem (iii) Prove that ${H\F_p}_*(b)=0$ and hence that $H\F_p \land G_j$ is contractible.
	\end{enumerate}
	We believe this list of steps could serve as a useful reference worth revisiting if one loses the big picture while studying the details. It is intended as a map guiding the reader through the proof.
	
	
	
	This section is divided into subsections corresponding to the main steps of the plan. Each of the subsections begins with an outline of the step followed by a complete proof.
	
	\subsubsection{Construction of $b: G_j \to \Sigma^{-|b|}G_j$}
	The aim of this subsection is to construct a map $b: G_j \to \Sigma^{-|b|}G_j$. This is the central object of the entire nilpotence theorem and most of the proof is dedicated to establishing its definition, alternative formulations and factorizations. The purpose of $b$ is to relate the Bousfield equivalence classes of $G_j$ and $G_{j+1}$.
	
	\vspace{1em}
	
	Recall that $G_j$ and $G_{j+1}$ are already related for they are the $p$-localisations of the Thom spectra associated to $B_{p^j-1}$ and $B_{p^{j+1}-1}$. There is a natural injection $B_{p^j-1} \hookrightarrow B_{p^{j+1}-1}$ inducing a map of spectra $G_j \to G_{j+1}$. One could consider the cofibre of this map, relate it to $G_j$ and then associate $b$ to this setting.
	
	\vspace{1em}
	
	However, there is a good reason to construct $b$ in a greater generality, and we shall do that. The reason is that the same construction associated to different spaces reappears later in the proof and a more general treatment allows us to then refer to this section. We will associate a map $b: E_0^\xi \to \Sigma^{-|b|}E_0^\xi$ to any fibration
	$$ E_0 \to E_{p-1} \xrightarrow{q} J_{p-1}S^{2m} $$
	with a map $\xi: E_{p-1} \to BU$. When this construction is applied to the fibration
	$$ B_{p^j-1} \to B_{p^{j+1}-1} \xrightarrow{q} J_{p-1}S^{2p^jn}$$
	constructed in Lemma \ref{OGfibration}, it yields a map $b: G_j \to \Sigma^{-|b|}G_j$ since $G_j$ is the Thom spectrum associated to $B_{p^j-1}$. 
	
	\vspace{1em}
	
	Formally, consider the category with objects $(E_{p-1}, q, \xi)$ as above and in which the morphisms $f: (E_{p-1}, q, \xi) \to (E_{p-1}', q', \xi')$ are given by commutative diagrams
	$$
	\begin{tikzcd}
	E_{p-1} \arrow[rr, "f"] \arrow[dr, "\xi"] & & E_{p-1}' \arrow[dl, "\xi'"]\\
	&BU&
	\end{tikzcd}
	\quad \text{and} \quad
	\begin{tikzcd}
	E_{p-1} \arrow[rr, "f"] \arrow[dr, "q"] & & E_{p-1}' \arrow[dl, "q'"]\\
	&J_{p-1}S^{2m}&
	\end{tikzcd}.
	$$
	The association $F: (E_{p-1}, q, \xi) \mapsto E_0^\xi$ is a functor from this category to $\bf hSp$. Commutativity of the right triangle gives a map between the fibres and commutativity of the left triangle guarantees that this map is a morphism in ${\bf Top}_{BU}$. Passing to Thom spectra yields a morphism in $\bf hSp$. By construction it is now clear that $F$ is a functor. The map $b = b(E_{p-1}, q, \xi)$ we aim to construct turns out to be a natural transformation between $\Sigma^{2mp-2}F$ and $F$. 
	
	\vspace{1em}
	
	Let us begin by constructing $b$. By taking pullbacks of $q$ along the inclusion maps
	$$J_0S^{2m} \hookrightarrow J_1S^{2m} \hookrightarrow \dots \hookrightarrow J_{p-1}S^{2m}$$
	we obtain a commutative diagram
	$$
	\begin{tikzcd}
	E_0 \arrow[r, hook] \arrow[d]& E_1 \arrow[r, hook] \arrow[d]& \dots \arrow[r, hook] & E_{p-1} \arrow[d, "q"]\\
	J_0S^{2m} \arrow[r, hook] & J_1S^{2m} \arrow[r, hook]& \dots \arrow[r, hook]&J_{p-1}S^{2m}.
	\end{tikzcd}
	$$
	In particular $J_0S^{2m} = \{ * \} $ so $E_0$ is the fibre of $q$. 
	Consider the composition $E_{p-1} \xrightarrow{(q, \id)} J_{p-1}S^{2m} \times E_{p-1} \xrightarrow{\pi_2} E_{p-1} \xrightarrow{\xi}BU$. By passing to Thom spectra we obtain a map
	$$ E_{p-1}^\xi \to \Sigma^\infty J_{p-1}S^{2m}_+ \land E_{p-1}^\xi.$$
	Further define the maps
	$$ \theta_k : E_{p-1}^\xi \to \Sigma^\infty J_{p-1}S^{2m}_+ \land E_{p-1}^\xi \to S^{2mk} \land E_{p-1}^\xi \simeq \Sigma^{2mk} E_{p-1}^\xi$$
	where $ \Sigma^\infty J_{p-1}S^{2m}_+ \to S^{2mk}$ is obtained by choosing a splitting $ \Sigma^\infty J_{p-1}S^{2m}_+ \simeq \bigvee_{j=0}^{p-1}S^{2mj}$ due to the stable version of the Theorem \ref{James}. 
	
	\vspace{1em}
	
	There is a natural filtration $ E_0^\xi \to E_1^\xi \to \dots \to E_{p-1}^\xi $ of $E_{p-1}^\xi$ and we will be interested in the restrictions of $\theta_k$ to $E_{i}^\xi$ for $i \in \{ 1, \dots, p-1 \}$. At the moment, $E_i^\xi \to J_iS^{2m}_+ \land E_i^\xi$ is just a map of spectra, but we shall show that it actually preserves the smash product filtration on $J_iS^{2m}_+ \land E_i^\xi$, at least up to homotopy. We need this because it lets us establish some equivalences induced by the $\theta_k$ in the technical Lemma \ref{cofibre} and then we can finally use those to define $b$.
	
	\vspace{1em}
	
	The spheres are suspensions and hence co-$H$-spaces, so the diagonal map $S^{2m} \to S^{2m} \times S^{2m}$ and the co-$H$-space map $S^{2m} \to S^{2m} \vee S^{2m} \to S^{2m} \times S^{2m}$ are homotopic. By applying the partial James construction we obtain a homotopy $H: J_{p-1}S^{2m} \times I \to J_{p-1}S^{2m} \times J_{p-1}S^{2m}$ between the diagonal map $\Delta_0$ and the map $\Delta_1$ induced by the co-$H$-space map
	\begin{align*}
	\Delta_0: J_{p-1}S^{2m} &\to J_{p-1}S^{2m} \times J_{p-1}S^{2m}\\
	\Delta_1: J_{p-1}S^{2m} &\to J_{p-1}(S^{2m} \vee S^{2m}) \to J_{p-1}S^{2m} \times J_{p-1}S^{2m}.
	\end{align*}
	Note that if $J_{p-1}S^{2m}$ is filtered using the James filtration $J_0S^{2m} \hookrightarrow J_1S^{2m} \hookrightarrow \dots \hookrightarrow J_{p-1}S^{2m}$ and we equip $J_{p-1}S^{2m} \times J_{p-1}S^{2m}$ with the product filtration (in which the degree $k$ terms are $\bigcup_{i=0}^k J_iS^{2m} \times J_{k-i}S^{2m}$), then $\Delta_1$ is filtration preserving.
	
	\vspace{1em}
	
	In the next paragraph we try to lift $\Delta_0$, $\Delta_1$ and $H$.	Since $q$ is a fibration, so is $\id \times q: J_{p-1}S^{2m} \times E_{p-1} \to J_{p-1}S^{2m} \times J_{p-1}S^{2m}$. Using the homotopy lifting property we can lift $H$ to the diagonal of the diagram
	$$
	\begin{tikzcd}
	E_{p-1} \times I \arrow[r, "\widetilde{H}"] \arrow[d, "{q \times \id}"] & J_{p-1}S^{2m} \times E_{p-1} \arrow[d, "{\id  \times  q}"]\\
	J_{p-1}S^{2m} \times I \arrow[r, "H"] & J_{p-1}S^{2m} \times J_{p-1}S^{2m}.
	\end{tikzcd}
	$$
	and use this diagonal to define the top map $\widetilde{H}$ such that the square commutes. It is now easy to verify that $\widetilde{H_0} := \widetilde{H}|_{E_{p-1} \times \{0\}} = (q, \id)$ and hence by passing to Thom spectra we recover the map $E_{p-1}^\xi \to \Sigma^\infty J_{p-1}S^{2m}_+ \land E_{p-1}^\xi$ that appears in the definition of $\theta_k$. The homotopy $\widetilde{H}$ shows that it is homotopic to a map obtained by Thomifying $\widetilde{H_1} := \widetilde{H}|_{E_{p-1}\times \{1\}}$. But $\widetilde{H_1}$ is filtration preserving even before passing to Thom spectra because $\Delta_1$ is and the square commutes.
	
	
	\vspace{1 em}
	
	We are now well-prepared for the following lemma, which is the technical heart of this section. Studying its proof may be skipped during the first reading or altogether. The purpose of this lemma is to establish some equivalences that are required to give the definition of $b$. Notationally, we follow our convention that all restrictions of $\theta_k$ and all maps on cofibres induced by $\theta_k$ will also be denoted by $\theta_k$. This convention is useful, because looking at the expressions such as $ {\theta_k|}_{E_i^\xi / E_{i-1}^\xi}: \frac{E_i^\xi}{E_{i-1}^\xi} \to \Sigma^{2mk} \frac{E_{i-k}^\xi}{E_{i-k-1}^\xi}$ hurts.
	
	\begin{lemma}\label{cofibre}
		(i)The composition
		$ \frac{E_i^\xi}{E_{i-1}^\xi} \xrightarrow{\theta_k}\Sigma^{2mk} \frac{E_{i-k}^\xi}{E_{i-k-1}^\xi} \xrightarrow{\theta_j} \Sigma^{2m(k+j)} \frac{E_{i-k-j}^\xi}{E_{i-k-j-1}^\xi}$
		is equal to $\theta_{k+j}$ for $k+j \in \{1, \dots, p-1 \}$.\\
		(ii) There is an equivalence $\theta_k: E_{k}^\xi/E_{k-1}^\xi \to \Sigma^{2mk}E_0^\xi$ for any $k \in \{1, \dots, p-1 \}$.\\
		(iii) There is an equivalence $\theta_1: E_{p-1}^\xi / E_0^\xi \to \Sigma ^{2m} E_{p-2}^\xi$.
	\end{lemma}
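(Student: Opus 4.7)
The plan is to reduce all three claims to the combinatorial and homological content of the filtration-preserving lift $\widetilde{H}_1$ constructed just before the lemma, combined with the Thom isomorphism. The main obstacle will be tracking precisely how $\theta_k$ interacts with the filtrations on source and target; once this is established, (ii) follows by a homology Whitehead argument, (iii) from (ii) by induction on filtration degree, and (i) from the coassociativity of the diagonal.

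For (i), I would first rewrite $\theta_k$ as the composite $E_{p-1}^\xi \to \Sigma^\infty (J_{p-1}S^{2m})_+ \land E_{p-1}^\xi \to S^{2mk} \land E_{p-1}^\xi$, where the first arrow is the Thomification of $\widetilde{H}_1$ and the second comes from the chosen stable James splitting. Iterating, the composite $\theta_j \circ \theta_k$ factors through a two-fold diagonal $E_{p-1}^\xi \to \Sigma^\infty (J_{p-1}S^{2m})_+ \land \Sigma^\infty (J_{p-1}S^{2m})_+ \land E_{p-1}^\xi$, while $\theta_{k+j}$ uses only one copy. Coassociativity of $\Delta_1$, inherited from the co-$H$-space structure on $S^{2m}$, reduces the two-fold diagonal to a single diagonal followed by the Hopf-algebra coproduct on $\Sigma^\infty (J_{p-1}S^{2m})_+$, and a direct calculation with the $p$-local James splitting shows that the $(k,j)$-component of this coproduct coincides with the $(k+j)$-summand. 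Filtration-preservation of $\widetilde{H}_1$ ensures that everything descends to the specified subquotients, yielding the claimed identity.

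For (ii), filtration-preservation of $\widetilde{H}_1$ implies that, on $E_i^\xi$, the composite $E_i^\xi \to \Sigma^\infty (J_{p-1}S^{2m})_+ \land E_i^\xi$ lands in the wedge of summands of James-index $a$ and $E$-filtration index $b$ subject to $a+b \leq i$; projection to $S^{2mk}$ selects the summand with $a = k$, namely $\Sigma^{2mk} E_{i-k}^\xi$. Taking $i = k-1$ forces this summand to vanish, while taking $i = k$ gives $\Sigma^{2mk} E_0^\xi$, so $\theta_k$ descends to $E_k^\xi / E_{k-1}^\xi \to \Sigma^{2mk} E_0^\xi$. To promote this to an equivalence I would verify an isomorphism on $\F_p$-homology: the Thom isomorphism identifies the source with a shift of $H_*(E_k, E_{k-1}; \F_p)$, and the Serre spectral sequence for $E_0 \to E_k \to J_k S^{2m}$ together with $J_k S^{2m} / J_{k-1} S^{2m} \simeq S^{2mk}$ collapses this to $H_{*-2mk}(E_0; \F_p)$; a quick unwinding shows $\theta_k$ realises the canonical identification. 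Since all spectra involved are connective and of finite type after $p$-localisation, Whitehead's theorem upgrades the homology isomorphism to the required equivalence.

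For (iii), I would filter $E_{p-1}^\xi / E_0^\xi$ by the images of $E_i^\xi / E_0^\xi$ and filter $\Sigma^{2m} E_{p-2}^\xi$ by $\Sigma^{2m} E_{i-1}^\xi$ (interpreting $E_{-1}^\xi$ as the basepoint), so that the $i$-th subquotients are $E_i^\xi / E_{i-1}^\xi$ and $\Sigma^{2m}(E_{i-1}^\xi / E_{i-2}^\xi)$ respectively. Part (ii) identifies each with $\Sigma^{2mi} E_0^\xi$, and filtration-preservation of $\theta_1$ produces a well-defined map between them. Applying part (i) with $j = i-1$ yields $\theta_{i-1} \circ \theta_1 = \theta_i$, and since both $\theta_{i-1}$ and $\theta_i$ are equivalences by (ii), so is the induced $\theta_1$ on the $i$-th subquotient. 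A standard induction using the cofibre sequences associated to each filtration stage and the five lemma in the triangulated category ${\bf hSp}$ then assembles these piecewise equivalences into the desired equivalence of spectra.
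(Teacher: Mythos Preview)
Your argument is correct and largely parallels the paper's. Part (iii) is essentially identical: the paper runs the same induction on $k$, using the cofibre sequences $E_{k-1}^\xi/E_0^\xi \to E_k^\xi/E_0^\xi \to E_k^\xi/E_{k-1}^\xi$ and $\Sigma^{2m}E_{k-2}^\xi \to \Sigma^{2m}E_{k-1}^\xi \to \Sigma^{2m}E_{k-1}^\xi/E_{k-2}^\xi$, invoking parts (i) and (ii) on the outer maps, and finishing with the five lemma and homology Whitehead. For part (i) the paper simply defers to \cite{devinatz1988nilpotence}, so your coassociativity sketch actually says more than the paper does; note that the $p$-locality hypothesis enters precisely in the splitting computation you allude to, as the paper remarks afterwards.

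The only genuine technical difference is in part (ii). Rather than appealing to the Serre spectral sequence of $E_0 \to E_k \to J_kS^{2m}$ to compute $H_*(E_k,E_{k-1};\F_p)$ and then asserting that $\theta_k$ realises the resulting identification, the paper writes down an explicit commutative square of \emph{pairs} of spaces built from $\Delta_1$ and $\widetilde{H}_1$, checks directly that the bottom map becomes the identity $S^{2mk}\to S^{2mk}$ on quotients, and observes that the square is a pullback of pairs, so the top map is a homology isomorphism as well. Your route is valid, but the sentence ``a quick unwinding shows $\theta_k$ realises the canonical identification'' is exactly the content that the paper's explicit diagram makes transparent; to complete your version rigorously you would need to unpack how $\widetilde{H}_1$ interacts with the Serre filtration, which is essentially what that diagram encodes.
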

	\begin{proof}
		We prove parts (ii) and (iii) and refer to \cite[Lemma 3.9]{devinatz1988nilpotence} for part (i).
		
		\vspace{1em}
				 
		(ii) We first establish a commutative diagram of pairs of spaces
		$$
		\begin{tikzcd}
		(* \times E_k, *\times E_{k-1}) \arrow[r] \arrow[d, "* \times q"] & ((S^{2mk} \times E_0) \cup (* \times E_k), * \times E_k) \arrow[d, "{\pi_1  \cup * \times q}"]\\
		(* \times J_kS^{2m}, * \times J_{k-1}S^{2m}) \arrow[r] & (S^{2mk} \vee J_kS^{2m}, * \times J_kS^{2m}).
		\end{tikzcd}
		$$
		It is clear that the vertical maps are well-defined maps of pairs. The bottom map $* \times J_k S^{2m} \to S^{2mk} \vee J_kS^{2m}$ is given by
		$$ * \times J_kS^{2m} \cong J_kS^{2m} \xrightarrow{\Delta_1} \bigcup_{i=0}^k J_iS^{2m} \times J_{k-i} S^{2m} \xrightarrow{\delta} S^{2mk} \times * \cup * \times \bigcup_{i=0}^{k-1} J_{k-i}S^{2m} $$
		where the first map is well-defined because $\Delta_1$ is filtration preserving. The second map $\delta$ is a projection onto the second factor for all $i \neq k$ and it is induced by $J_kS^{2m} \to J_kS^{2m} / J_{k-1}S^{2m} \cong {(S^{2m})}^{\land k} \cong S^{2mk}$ for $i=k$. Note that because $\Delta_1$ preserves the filtration on $J_kS^{2m}$ we have 
		$$\delta \left(\Delta_1 \left(* \times J_{k-1}S^{2m} \right) \right) \subset \delta \left(\bigcup_{i=0}^{k-1}J_iS^{2m} \times J_{k-1-i}S^{2m}\right) \subset * \times J_kS^{2m}.$$
		Therefore the bottom map is a map of pairs. Similarly, the top map can be defined using $\widetilde{H_1}$ in place of $\Delta_1$ and it similarly follows that it is a map of pairs. Note that we are crucially using the fact that $\Delta_1$ and $\widetilde{H_1}$ are filtration preserving.
		
		\vspace{1em}
		
		The map $H_*(*\times J_kS^{2m}, *\times J_{k-1}S^{2m};\F_p) \to H_*(S^{2mk} \vee J_kS^{2m}, * \times J_kS^{2m};\F_p)$ is an isomorphism because it is induced by the map
		$$ \frac{J_{k}S^{2m}}{J_{k-1}S^{2m}} \cong S^{2mk} \xrightarrow{\id} S^{2mk} \cong \frac{S^{2mk} \vee J_kS^{2m}}{* \times J_kS^{2m}}$$
		which can be seen to be an identity by decompressing the definition of the bottom map of pairs. The diagram is a pullback square of pairs by an explicit calculation and hence the top map is an equivalence on homology too. By passing to the relative Thom spectra from the top map of pairs we obtain that
		$$ \frac{E_k^\xi}{E_{k-1}^\xi} \to \Sigma^{2mk}E_0^\xi $$
		is an equivalence by the homology Whitehead theorem and the Thom isomorphism theorem.
		
		\vspace{1em}
		
		(iii) We show by induction on $k$ that $E_k^\xi / E_0^\xi \xrightarrow{\theta_1} \Sigma^{2m} E_{k-1}^\xi$ is an equivalence for all $k \in \{ 1, \dots, p-1 \}$. The base case $k=1$ was shown in part (ii).
		
		\vspace{1em}
		
		Consider the following diagram in which both rows are cofibre sequences and the vertical maps are induced by $\theta_1$.
		$$
		\begin{tikzcd}
		\frac{E_{k-1}^\xi}{E_0^\xi} \arrow[r] \arrow[d, "\theta_1"]& \frac{E_{k}^\xi}{E_0^\xi} \arrow[r] \arrow[d, "\theta_1"]& \frac{E_{k}^\xi}{E_{k-1}^\xi} \arrow[d, "\theta_1"]\\
		\Sigma^{2m}E_{k-2}^\xi \arrow[r]& \Sigma^{2m}E_{k-1}^\xi \arrow[r]& \Sigma^{2m} \frac{E_{k-1}^\xi}{E_{k-2}^\xi}
		\end{tikzcd}
		$$
		To distinguish between the maps induced by $\theta_1$ we call them the leftmost, the middle and the rightmost map. The leftmost map is an equivalence by the induction hypothesis. The rightmost map can be composed with $\theta_{k-1}$ to form
		$$ \frac{E_k^\xi}{E_{k-1}^\xi} \xrightarrow{\theta_1}\Sigma^{2m} \frac{E_{k-1}^\xi}{E_{k-2}^\xi} \xrightarrow{\theta_{k-1}} \Sigma^{2mk} E_0^\xi$$
		which is $\theta_k$ by part (i). But $\theta_k$ is an equivalence by (ii). It follows that $\theta_{k-1} \circ \theta_1$ is an equivalence and so the rightmost map $\theta_1$ is an equivalence too.
		
		\vspace{1 em}
		
		Consider the commutative diagram of long exact sequences on (mod $p$) homology induced by the cofibre sequences above. The left and the right map are isomorphisms and so by the 5-lemma, the middle map is an isomorphism on too. By the homology Whitehead theorem, the middle map $\theta_1$ is an equivalence which completes the inductive step.
	\end{proof}
	Having established Lemma \ref{cofibre}, we can now finally define $b: \Sigma^{2mp-2}E_0^\xi \to E_0^\xi$ as the composite
	$$  \Sigma^{2mp-2} E_0^\xi \xrightarrow{\theta_{p-1}^{-1}} \Sigma^{2m-2} \frac{E_{p-1}^\xi}{E_{p-2}^\xi} \xrightarrow{\delta_1} \Sigma^{2m-1}E_{p-2}^\xi \xrightarrow{\theta_1^{-1}} \Sigma^{-1} \frac{E_{p-1}^\xi}{E_0^\xi} \xrightarrow{\delta_2} E_0^\xi$$ 
	where the maps $\delta_1$ and $\delta_2$ arise from the cofibre sequences
	$$ \dots \to \Sigma^{2m-2} E_{p-2}^\xi \to \Sigma^{2m-2} E_{p-1}^\xi \to \Sigma^{2m-2} \frac{E_{p-1}^\xi}{E_{p-2}^\xi} \xrightarrow{\delta_1} \Sigma^{2m-1} E_{p-2}^\xi \to \dots$$
	and
	$$ \dots \to \Sigma^{-1} E_{0}^\xi \to \Sigma^{-1} E_{p-1}^\xi \to \Sigma^{-1} \frac{E_{p-1}^\xi}{ E_{0}^\xi} \xrightarrow{\delta_2} E_{0}^\xi \to \cdots.$$
	Most concisely the map $b$ can be expressed as $\delta_2 \circ \theta_1^{-1}\circ \delta_1 \circ \theta_{p-1}^{-1}$. The useful features of this formula are the facts that $\theta_1$ and $\theta_{p-1}$ are equivalences and the maps $\delta_1$ and $\delta_2$ both have some suspension of $E_{p-1}^\xi$ as a cofibre.
	
	\vspace{1em}
	
	\begin{remark}
		We reiterate that all spaces and spectra in the discussion are $p$-local. The reason for this assumption is not apparent from the proof presented here since the $p$-locality is only used in the omitted argument for part (i). Looking at this step carefully, one establishes that the statement of the Lemma \ref{cofibre}(i) can be slightly generalized to only requiring the invertibility of certain integers (as opposed to localization at $p$). Nonetheless, some integers will always need to be invertible which hints at the inherently local structure of the proof of the nilpotence theorem.
		
		
		
				
	\end{remark}

	\vspace{1em}
	
	For the proof that $b$ is a natural transformation see \cite[Proposition 3.15]{devinatz1988nilpotence}. 
	\begin{lemma}
		The map $b=b(E_{p-1}, q, \xi): E_0^\xi \to \Sigma^{-|b|}E_0^\xi$ is a natural transformation.
	\end{lemma}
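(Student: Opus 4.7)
The plan is to verify naturality piece by piece by decomposing $b$ into its four constituents $\delta_2$, $\theta_1^{-1}$, $\delta_1$, $\theta_{p-1}^{-1}$ and checking that each is natural in $(E_{p-1}, q, \xi)$; since naturality is preserved under composition and inversion (within $\mathbf{hSp}$), this will suffice. Fix a morphism $f: (E_{p-1}, q, \xi) \to (E_{p-1}', q', \xi')$. The compatibility $q' \circ f = q$ implies that $f$ takes $E_i = q^{-1}(J_iS^{2m})$ into $E_i' = (q')^{-1}(J_iS^{2m})$, so $f$ preserves the whole filtration $E_0 \subset E_1 \subset \dots \subset E_{p-1}$. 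Combined with $\xi' \circ f = \xi$, Thomification yields a map of filtered spectra $F(f): E_i^\xi \to (E_i')^{\xi'}$ for each $i$, and hence (using the universal property of cofibres) compatible maps on the subquotients $E_i^\xi / E_{i-1}^\xi$.

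First I would show that the maps $\theta_k$ are natural. Unpacking the definition, $\theta_k$ is the composite $E_{p-1}^\xi \to \Sigma^\infty J_{p-1}S^{2m}_+ \land E_{p-1}^\xi \to S^{2mk} \land E_{p-1}^\xi$, where the first map is Thomified from $(q, \id)$ and the second comes from the stable splitting of $J_{p-1}S^{2m}_+$ projecting onto the $k$-th wedge summand. Since $f$ commutes with both the bundle data and the fibration $q$, and since the Snaith-type splitting of $\Sigma^\infty J_{p-1}S^{2m}_+$ is intrinsic to $J_{p-1}S^{2m}$ (and thus appears identically in the source and target diagrams), the naturality square for $\theta_k$ commutes. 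The same reasoning applies after restricting to $E_i^\xi$ and passing to the induced maps on the cofibres $E_i^\xi/E_{i-1}^\xi$, so the equivalences $\theta_k: E_k^\xi/E_{k-1}^\xi \to \Sigma^{2mk} E_0^\xi$ of Lemma \ref{cofibre}(ii) and $\theta_1: E_{p-1}^\xi/E_0^\xi \to \Sigma^{2m}E_{p-2}^\xi$ of Lemma \ref{cofibre}(iii) are natural. Naturality of their inverses in $\mathbf{hSp}$ then follows formally from naturality of the forward maps.

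Next I would observe that the connecting maps $\delta_1$ and $\delta_2$ are natural because they are induced by the cofibre sequences coming from the filtration, and $F(f)$ is a map of filtered spectra. Concretely, the square
$$
\begin{tikzcd}
\Sigma^{2m-2}(E_{p-1}^\xi / E_{p-2}^\xi) \arrow[r, "\delta_1"] \arrow[d] & \Sigma^{2m-1}E_{p-2}^\xi \arrow[d] \\
\Sigma^{2m-2}((E_{p-1}')^{\xi'} / (E_{p-2}')^{\xi'}) \arrow[r, "\delta_1"] & \Sigma^{2m-1}(E_{p-2}')^{\xi'}
\end{tikzcd}
$$
commutes by the naturality of connecting maps in cofibre sequences, and an identical argument handles $\delta_2$. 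Splicing the four naturality squares together along the composite $b = \delta_2 \circ \theta_1^{-1} \circ \delta_1 \circ \theta_{p-1}^{-1}$ yields the naturality of $b$. I do not anticipate a genuine obstacle here — the only subtlety is that the lift $\widetilde{H}$ used in the construction of $\theta_k$ was not canonical, but this ambiguity evaporates once we pass to $\mathbf{hSp}$, since all constructions were shown to be well-defined up to homotopy.
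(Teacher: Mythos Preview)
The paper does not give its own proof of this lemma; it simply refers the reader to \cite[Proposition 3.15]{devinatz1988nilpotence}. Your decomposition strategy --- check naturality of $\theta_{p-1}^{-1}$, $\delta_1$, $\theta_1^{-1}$, $\delta_2$ separately and splice --- is exactly the right one, and is in essence what that reference does.

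The only place where your argument is too quick is the last sentence. The non-canonical lift $\widetilde{H}$ does not ``evaporate'' in $\mathbf{hSp}$ merely because $\theta_k$ is well-defined up to homotopy on all of $E_{p-1}^\xi$. The maps that actually enter the definition of $b$ are the induced maps of $\theta_k$ on the \emph{subquotients} $E_i^\xi/E_{i-1}^\xi$, and those are extracted from the filtration-preserving model $\widetilde{H}_1$, not from $(q,\id)$ directly. Two homotopic maps which both happen to preserve a filtration need not induce the same maps on associated graded pieces unless the homotopy between them is itself filtration-preserving. What saves you here is that any two lifts of $H$ starting at $(q,\id)$ differ by a homotopy lying over the identity of $J_{p-1}S^{2m}\times J_{p-1}S^{2m}$; such a fibrewise homotopy automatically preserves the filtration (which is pulled back from the base), so the induced maps on subquotients are indeed independent of the lift. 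For the naturality square itself, the cleanest way to finish is to choose the lifts for source and target compatibly with $f$ --- e.g.\ lift in the target first and use $q'\circ f=q$ to produce a compatible lift in the source --- so that $F(f)$ strictly intertwines the two filtration-preserving models of $\theta_k$. After this adjustment the splicing argument you outline goes through without change.
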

	
	\subsubsection{Bousfield equivalence classes of $G_j$ and $G_{j+1}$}
	In the previous section, we have associated a map $b: E_0^\xi \to \Sigma^{-|b|}E_0^\xi$ to a general framework consisting of any fibration  $E_0 \to E_{p-1} \xrightarrow{q} J_{p-1}S^{2m}$ with a map $\xi: E_{p-1} \to BU$. While we tried to motivate the lengthy and difficult construction by promises that $b$ is an essential ingredient of the proof, we never provided any evidence for our claims. In this section, we deliver on our promises by using $b$ to relate the Bousfield equivalence classes of $G_j$ and $G_{j+1}$.
	
	\vspace{1em}
	
	Let us specialize to the fibration $B_{p^j-1} \to B_{p^{j+1}-1} \xrightarrow{q} J_{p-1}S^{2p^jn}$ obtained in the Lemma \ref{OGfibration}, but we still use $m:=p^jn$ as a shorthand in many expressions. In this instance we have $E_0^\xi= G_j$, $E_{p-1}^\xi = G_{j+1}$ and the map $b:G_j \to \Sigma^{-|b|}G_j$. In the argument, both of these notations are used interchangeably because they are useful for different reasons. Globally, we are only interested in $G_j$ and $G_{j+1}$, but their relationship arises through them being the endpoints of some filtration with specific properties.
	
	\vspace{1em}
	
	Our ultimate goal is to establish the Bousfield equivalence $\langle G_j \rangle  = \langle G_{j+1} \rangle$. The following lemma provides more than one of the two inequalities.
	
	\begin{lemma}
		$\langle G_j \rangle  = \langle G_{j+1} \rangle \vee \langle b^{-1}G_j \rangle$.
	\end{lemma}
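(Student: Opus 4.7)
The plan is to establish both inequalities $\langle G_j \rangle \geq \langle G_{j+1} \rangle \vee \langle b^{-1}G_j \rangle$ and $\langle G_j \rangle \leq \langle G_{j+1} \rangle \vee \langle b^{-1}G_j \rangle$ by testing against an arbitrary spectrum $X$. Recall that $\langle E \rangle = \langle F \rangle \vee \langle G \rangle$ means precisely that $E \wedge X \simeq *$ iff both $F \wedge X \simeq *$ and $G \wedge X \simeq *$.

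For the easy inequality $\langle G_j \rangle \geq \langle G_{j+1} \rangle \vee \langle b^{-1}G_j \rangle$, suppose $G_j \wedge X \simeq *$. The subquotient equivalences $\theta_k \colon E_k^\xi / E_{k-1}^\xi \simeq \Sigma^{2mk} E_0^\xi = \Sigma^{2mk} G_j$ from Lemma \ref{cofibre} show that $G_{j+1} = E_{p-1}^\xi$ is built from $G_j$ by finitely many cofibre sequences whose subquotients are suspensions of $G_j$. Smashing each such cofibre sequence with $X$ and inducting on the filtration length gives $G_{j+1} \wedge X \simeq *$. For $b^{-1}G_j \wedge X$, use that smashing commutes with homotopy colimits (as noted in Section 2.2); since each term in the telescope defining $b^{-1}G_j$ is a desuspension of $G_j$, smashing with $X$ yields the homotopy colimit of a diagram of contractible spectra, which is contractible.

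For the harder inequality, suppose both $G_{j+1} \wedge X \simeq *$ and $b^{-1}G_j \wedge X \simeq *$. The key observation is that the maps $\delta_1$ and $\delta_2$ appearing in the definition $b = \delta_2 \circ \theta_1^{-1} \circ \delta_1 \circ \theta_{p-1}^{-1}$ are connecting maps of cofibre sequences whose remaining term is (a suspension of) $E_{p-1}^\xi = G_{j+1}$. Explicitly, in the cofibre sequence
\[ \Sigma^{2m-2} E_{p-1}^\xi \to \Sigma^{2m-2} \tfrac{E_{p-1}^\xi}{E_{p-2}^\xi} \xrightarrow{\delta_1} \Sigma^{2m-1} E_{p-2}^\xi \to \Sigma^{2m-1} E_{p-1}^\xi, \]
smashing with $X$ kills the outer terms, so $\delta_1 \wedge \id_X$ becomes an equivalence; the analogous argument applies to $\delta_2$. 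Since $\theta_1$ and $\theta_{p-1}$ are equivalences in $\mathbf{hSp}$, so are $\theta_1^{-1} \wedge \id_X$ and $\theta_{p-1}^{-1} \wedge \id_X$. Composing, $b \wedge \id_X \colon \Sigma^{2mp-2} G_j \wedge X \to G_j \wedge X$ is an equivalence.

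Finally, because $b \wedge \id_X$ is an equivalence, the telescope defining $b^{-1}G_j \wedge X$ is a homotopy colimit of a sequence of equivalences, hence equivalent to its first term $G_j \wedge X$. Our hypothesis gives $b^{-1}G_j \wedge X \simeq *$, so $G_j \wedge X \simeq *$, completing the other direction. The main obstacle to watch for is the bookkeeping with the two cofibre sequences: one needs to be attentive that the relevant long exact sequence on smashing with $X$ genuinely makes $\delta_1, \delta_2$ into equivalences, which it does precisely because $E_{p-1}^\xi = G_{j+1}$ appears as the vanishing flanking term.
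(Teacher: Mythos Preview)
Your proof is correct and follows essentially the same approach as the paper: for the forward direction you use the filtration $E_0^\xi \subset \cdots \subset E_{p-1}^\xi$ together with the equivalences $\theta_k$ from Lemma~\ref{cofibre}(ii) and the commutation of smash products with homotopy colimits, and for the converse you use the factorisation $b = \delta_2 \circ \theta_1^{-1} \circ \delta_1 \circ \theta_{p-1}^{-1}$ to show that $b \wedge \id_X$ is an equivalence, forcing $G_j \wedge X \simeq b^{-1}G_j \wedge X \simeq *$. The paper's argument is identical in structure and in all key steps.
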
	
	\begin{proof}
		Let $X$ be any spectrum such that $X \land G_j$ is contractible. We need to show that $X \land (G_{j+1} \vee b^{-1}G_j)= (X \land G_{j+1}) \vee (X \land b^{-1}G_j)$ is contractible and we do this by proving that both $X \land G_{j+1}$ and $X \land b^{-1}G_j$ are contractible.
		
		\vspace{1em}
		
		The spectrum $X \land b^{-1}G_j$ is the homotopy colimit of the diagram
		$$  X \land G_j \xrightarrow{\id_X \land b} X \land \Sigma^{-|b|}G_j \xrightarrow{\id_X \land b} X \land \Sigma^{-2|b|} G_j \to \cdots $$
		because the smash product commutes with arbitrary homotopy colimits. Since $X \land G_j$ is contractible, all spectra $X \land \Sigma^{-m|b|}G_j$ are contractible and hence so is their colimit $X \land b^{-1}G_j$.
		
		\vspace{1em}
		
		To show that $X \land G_{j+1}$ is contractible, we use the spectra $X \land E_k^\xi$ to interpolate between $X \land G_j$ and $X \land G_{j+1}$.
		For any  $k \in \{ 1, \dots, p-1 \}$ there is a cofibration $ E_{k-1}^\xi \to E_k^\xi \to E_k^\xi/E_{k-1}^\xi$ and recall that $\theta_k: E_k^\xi/E_{k-1}^\xi \to \Sigma^{2mk}G_j$ is an equivalence by Lemma \ref{cofibre} (ii). Smashing by $X$ we obtain a cofibration
		$$ X \land E_{k-1}^\xi \to X \land E_k^\xi \to X \land E_k^\xi/E_{k-1}^\xi \simeq X \land \Sigma^{2mk}G_j \simeq *$$
		in which the cofibre is contractible because it is a suspension of $X \land G_j$. By passing to the long exact sequence of homotopy groups associated to this cofibre sequence we find that $\pi_d(X \land E_k^\xi) \cong \pi_d(X \land E_{k-1}^\xi)$ for all $k$ and $d$. Using induction on $k$ one can show that $\pi_d(X \land E_{p-1}^\xi) \cong \pi_d(X \land E_0^\xi)$ for all $d$. But $X \land G_j$ is contractible by assumption and hence so is $X \land G_{j+1}$.
		
		\vspace{1em}
		
		We now prove the converse. Let $X$ be any spectrum such that $X \land G_{j+1}$ and $X \land b^{-1}G_j$ are both contractible. The definition of $b$ is $b = \delta_2 \circ \theta_1^{-1}\circ \delta_1 \circ \theta_{p-1}^{-1}$ where the cofibres of $\delta_1$ and $\delta_2$ are given by
		$$  C_{\delta_1}= \Sigma^{-1+2m}G_{j+1} \quad \text{and} \quad C_{\delta_2}=G_{j+1}.$$
		In particular, both $C_{\delta_1}$ and $C_{\delta_2}$ are suspensions of $G_{j+1}$ and hence contractible upon smashing with $X$. By considering the cofibre sequence of $\id_X \land \delta_2$
		$$ * \simeq X \land \Sigma^{-1}G_{j+1} \to X \land \Sigma^{-1} \frac{G_{j+1}}{G_j} \xrightarrow{\id_X \land \delta_2} X \land G_j \to  X \land G_{j+1} \simeq *$$
		it follows that $\id_X \land \delta_2$ is an equivalence. By an analogous argument the map $\id_X \land \delta_1$ is an equivalence. In Lemma \ref{cofibre} we have shown that the maps $\theta_1$ and $\theta_{p-1}$ are equivalences too and hence so are $\id_X \land \theta_1^{-1}$ and $\id_X \land \theta_{p-1}^{-1}$. 
		
		\vspace{1em}
		
		Consider now the map $\id_X \land b: X \land G_j \to X \land \Sigma^{-|b|}G_j$ which is an equivalence because it is a composition of four equivalences. Therefore all maps in the diagram
		$$ X \land G_j \xrightarrow{\id_X \land b} X \land  \Sigma^{-|b|} G_j \xrightarrow{\id_X \land b} \cdots  $$
		are equivalences and hence so is the map $X \land G_j \to X \land b^{-1}G_j$. Using the assumption that $b^{-1}G_j$ is contractible we obtain that $X \land G_j$ is contractible as required.
	\end{proof}

	\subsubsection{$b^{-1}G_j$ is contractible}
	The map $b$ was used to relate the Bousfield equivalence classes $\langle G_j \rangle$ and $\langle G_{j+1} \rangle$ in the previous section. To show that they are equal, it remains to establish that the telescope $b^{-1}G_j$ is contractible.
	
	However, this is impossible to do directly with what we know about $b$ so far. In this section, we first establish a factorisation of $b$ through $\Omega^2 S^{2m+1}_+ \land G_j$ given by the commutative diagram
	$$
	\begin{tikzcd}
	\Sigma^{2mp-2}G_j \arrow[d, equal, "\id"] \arrow[r, "b"] & G_j \\
	S^{2mp-2} \land G_j \arrow[r, "\gamma \land \id"]& \Sigma^\infty \Omega^2 S^{2m+1}_+ \land G_j \arrow[u, "\mu"]
	\end{tikzcd}
	$$
	where $\gamma$ and $\mu$ are some maps yet to be defined. This allows us to utilize the Snaith's splitting structure of $\Sigma^\infty \Omega^2 S^{2m+1}_+$ as follows.
	
	Let $ \Sigma^\infty \Omega^2 S^{2m+1}_+ \simeq \bigvee_{k=0}^\infty D_k$ be the Snaith's splitting. A careful analysis of $\gamma$ coupled with some properties of the splitting shows that $\gamma: S^{2mp-2} \to \Sigma^\infty \Omega^2 S^{2m+1}_+$ factors through $D_p$. It is then of interest to study the homotopy colimit of the diagram
	$$ S \xrightarrow{\gamma} \Sigma^{-|b|}D_p \xrightarrow{\gamma} \Sigma^{-2|b|} D_{2p} \xrightarrow{\gamma} \cdots$$
	which evaluates to $H \F_p$ by a result due to Mahowald \cite{mahowald1977new}. This shows, after passing from our factorization diagram to homotopy colimits in the appropriate sense, that the identity map $\id_{b^{-1} G_j}$ factors through $H\F_p \land G_j$. To conclude, we prove that $H \F_p \land G_j$ is contractible. Hence so is $b^{-1}G_j$.
	
	\vspace{1 em}
	
	We have been very hand-wavy and conceptual in our summary of the argument. Let us now sink into rigour and technicalities.
	
	\paragraph{\bf The map $\mu$} Let us begin by constructing a map $\mu:\Sigma^\infty \Omega^2 S^{2m+1}_+ \land G_j \to G_j$. We first define the the map in ${\bf Top}_{BU}$ and then stabilize by passing to Thom spectra. Recall that any map $f:X \to Y$ in $\bf Top$ can be replaced with a fibration $p_f: I_f \to Y$ up to homotopy equivalence, \emph{i.e.} such that $X \simeq I_f$. We apply this construction to the fibration $B_{p^j-1} \to \Omega SU(n+1) \xrightarrow{h} \Omega S^{2m+1}$ established in the proof of Lemma \ref{OGfibration}. For any $* \in \Omega S^{2m+1}$ we obtain
	\begin{align*}
		I_{h} &= \{  (\omega, e) \in P \Omega S^{2m+1} \times \Omega SU(n+1) \ | \ \omega(0)=h(e) \}\\
		p_{h}^{-1}(*) &= \{  (\omega, e) \in P \Omega S^{2m+1} \times \Omega SU(n+1) \ | \ \omega(0)=h(e), \ \omega(1) = * \}
	\end{align*}
	together with homotopy equivalences $\Omega SU(n+1) \simeq I_{h}$ and $B_{p^j-1} \simeq p_{h}^{-1}(*)$. Define now
	\begin{align*}
		\mu: P_*\Omega S^{2m+1} \times p_{h}^{-1}(*) &\to I_h\\
		(\lambda, (\omega, e)) &\mapsto (\lambda \omega, e)
	\end{align*}
	and note that it restricts to $\mu: \Omega^2 S^{2m+1} \times  p_{h}^{-1}(*) \to  p_{h}^{-1}(*)$. Passing to Thom spectra now yields the map $\mu: \Sigma^\infty \Omega^2 S^{2m+1}_+ \land G_j \to G_j$ as required.

	
	\vspace{1em}
	
	\begin{remark}
		At this point our presentation of the proof differs from that of \cite{devinatz1988nilpotence} in more than just the order in which the material is presented; the definition of the action $\mu$ is different. Indeed, this is only a superficial disparity. Having previously established the fibration $\Omega SU(n+1) \to \Omega^2 S^{2m+1}$ allows us to immediately define the action $\Sigma^\infty \Omega^2S^{2m+1}_+ \land G_j \xrightarrow{\mu} G_j$ rather than defining the action $\Sigma^\infty \Omega J_{p-1} S^{2m}_+ \land G_j \to G_j$ and extending it later (which \cite{devinatz1988nilpotence} does implicitly through a commutative diagram of fibrations established in Lemma 3.27 of that paper). Extending the action to $\Sigma^\infty \Omega^2 S^{2m+1}_+$ at some point is crucial because it allows us to utilize the nice Snaith's splitting structure of that spectrum.
	\end{remark}
	
	\paragraph{\bf The map $\gamma$} We now define the map of spectra $\gamma:S^{2mp-2} \to \Sigma^\infty \Omega^2 S^{2m+1}_+$. When we constructed $b$ at the start of this section, we resorted to a general framework in order to reuse the same construction at a later stage in the proof. This time has now arrived.
	
	Let $* \in J_{p-1}S^{2p^jn}$ be the basepoint and let $s$ denote the path fibration
	$\Omega J_{p-1}S^{2p^jn} \to P_* J_{p-1}S^{2p^jn} \xrightarrow{s} J_{p-1}S^{2p^jn}.$
	Defining $0:P_*J_{p-1}S^{2p^jn} \to BU$ to be the constant zero map places us in the situation we have already encountered: there is a map $$b''=b(P_* J_{p-1}S^{2p^jn}, s,0): \Sigma^{2mp-2} \Sigma^\infty \Omega J_{p-1}S^{2p^jn}_+ \to  \Sigma^\infty \Omega J_{p-1}S^{2p^jn}_+$$ associated to this setting. Define
	$$ \gamma: \Sigma^{2mp-2}S  \hookrightarrow \Sigma^{2mp-2} \Sigma^\infty \Omega J_{p-1}S^{2p^jn}_+ \xrightarrow{b''} \Sigma^\infty \Omega J_{p-1}S^{2p^jn}_+ \hookrightarrow\Sigma^\infty \Omega^2 S^{2p^jn+1}_+$$
	where the left map is just the inclusion of the bottom cell.
	
	
	\vspace{1em}
	
	The reason for introducing the maps $\mu$ and $\gamma$ is that they can be composed to form an interesting map 	
	$$ \Sigma^{2mp-2} G_j = S^{2mp-2} \land G_j \xrightarrow{\gamma \land \id} \Sigma^\infty \Omega^2 S^{2m+1}_+ \land G_j \xrightarrow{\mu} G_j.$$
	This turns out to be another name for $b$.
	\begin{lemma}\label{factorizationOfb}
		The diagram
		$$
		\begin{tikzcd}
		\Sigma^{2mp-2}G_j \arrow[d, equal, "\id"] \arrow[r, "b"] & G_j \\
		S^{2mp-2} \land G_j \arrow[r, "\gamma \land \id"]& \Sigma^\infty \Omega^2 S^{2m+1}_+ \land G_j \arrow[u, "\mu"]
		\end{tikzcd}
		$$
		commutes.
	\end{lemma}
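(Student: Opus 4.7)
The strategy is to realize both sides of the square as images of a single universal $b$-construction under the naturality of $b$, with the morphism connecting them built from the path-lifting action inherent in the fibration $q$.

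First, I would introduce an auxiliary triple $(\widetilde E, \widetilde q, \widetilde \xi)$ obtained by "multiplying" the path-fibration triple $(P_*J_{p-1}S^{2m}, s, 0)$ used to define $b''$ (hence $\gamma$) with the fibre data of $(B_{p^{j+1}-1}, q, \xi)$. Concretely, take $\widetilde E := P_*J_{p-1}S^{2m} \times B_{p^j-1}$, $\widetilde q := s \circ \mathrm{pr}_1$, and $\widetilde \xi := \xi|_{B_{p^j-1}} \circ \mathrm{pr}_2$. The fibre of $\widetilde q$ over the basepoint is $\Omega J_{p-1}S^{2m} \times B_{p^j-1}$, whose relative Thom spectrum with the restricted bundle is $\Sigma^\infty \Omega J_{p-1}S^{2m}_+ \land G_j$. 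The filtration $\widetilde E_k := s^{-1}(J_kS^{2m}) \times B_{p^j-1}$ is a product of the filtration on $P_*J_{p-1}S^{2m}$ with the constant factor $B_{p^j-1}$, and tracing through the construction one verifies the key identification
\[ b_{\widetilde T} \;=\; b'' \land \id_{G_j} : \Sigma^{2mp-2}\Sigma^\infty \Omega J_{p-1}S^{2m}_+ \land G_j \longrightarrow \Sigma^\infty \Omega J_{p-1}S^{2m}_+ \land G_j. \]
The maps $\theta_k$ depend only on the diagonal of $J_{p-1}S^{2m}$, and the boundary maps $\delta_1, \delta_2$ arise from cofibre sequences that split compatibly with the product, so each ingredient decomposes as the corresponding ingredient for $b''$ smashed with $\id_{G_j}$.

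Next, I would define a morphism of triples $L : (\widetilde E, \widetilde q, \widetilde\xi) \to (B_{p^{j+1}-1}, q, \xi)$ by path-lifting: send $(\omega, b)$ to the endpoint of the lift of $\omega$ in the fibration $q$ starting at $\kappa(b)$, where $\kappa : B_{p^j-1} \hookrightarrow B_{p^{j+1}-1}$ is the inclusion of the fibre over $*$. This is a map over $J_{p-1}S^{2m}$, and $\xi \circ L$ is homotopic to $\widetilde \xi$ since both agree with $\xi \circ \kappa$ on the slice $\{*\} \times B_{p^j-1}$ and the first factor $P_*J_{p-1}S^{2m}$ is contractible. On the fibre over $*$, the map $L$ restricts to the path-lifting action $\Omega J_{p-1}S^{2m} \times B_{p^j-1} \to B_{p^j-1}$; after Thomification this is precisely the restriction $\mu' := \mu|_{\Sigma^\infty \Omega J_{p-1}S^{2m}_+ \land G_j}$.

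Applying naturality of $b$ to $L$ yields the commutative square
\[
\begin{tikzcd}
\Sigma^{2mp-2}\Sigma^\infty \Omega J_{p-1}S^{2m}_+ \land G_j \arrow[r, "b'' \land \id"] \arrow[d, "\Sigma^{2mp-2}\mu'"'] & \Sigma^\infty \Omega J_{p-1}S^{2m}_+ \land G_j \arrow[d, "\mu'"] \\
\Sigma^{2mp-2} G_j \arrow[r, "b"'] & G_j.
\end{tikzcd}
\]
Precomposing with the bottom-cell inclusion $\iota : S^{2mp-2} \land G_j \hookrightarrow \Sigma^{2mp-2}\Sigma^\infty \Omega J_{p-1}S^{2m}_+ \land G_j$, the top route becomes $\mu' \circ (b'' \land \id_{G_j}) \circ \iota = \mu \circ (\gamma \land \id_{G_j})$ by the very definition of $\gamma$ (noting that $\mu'$ and $\mu$ agree on $\Sigma^\infty \Omega J_{p-1}S^{2m}_+ \land G_j$), while the bottom route reduces to $b$ since $\mu'$ precomposed with the bottom-cell inclusion is the identity (the constant loop acts trivially). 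Equality of the two routes is the desired factorization.

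The main obstacle is the product-decomposition claim $b_{\widetilde T} = b'' \land \id_{G_j}$. While conceptually natural, this requires checking that every step of the multi-stage construction of $b$ (the maps $\theta_k$ via the stable James splitting, the two cofibre boundary maps $\delta_1, \delta_2$, and the equivalences of Lemma \ref{cofibre}) is functorial with respect to this product of triples; once this is carefully confirmed the rest is a routine application of naturality.
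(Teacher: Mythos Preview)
Your proposal is correct and follows essentially the same approach as the paper: introduce the auxiliary product triple $(P_*J_{p-1}S^{2m}\times B_{p^j-1},\, s\circ\mathrm{pr}_1,\, \cdot)$, identify its associated $b$-map as $b''\land\id_{G_j}$ (the paper defers this step to \cite[Lemma~3.20]{devinatz1988nilpotence}, which is precisely the product-decomposition you flag as the main obstacle), construct the comparison morphism to $(B_{p^{j+1}-1},q,\xi)$ via the path-lifting action $\mu$, and conclude by naturality of $b$ after precomposing with the bottom-cell inclusion. Your map $L$ is exactly the map of fibrations the paper writes down using the mapping-path-space model for $q$, and your observation that the constant loop acts trivially is what the paper uses implicitly to identify the left-hand composite with $b$.
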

	\begin{proof}
	Consider the map of fibrations
	$$
	\begin{tikzcd}
	\Omega J_{p-1} S^{2p^jn} \times p_q^{-1}(*) \arrow[r] \arrow[d, "\mu"]& P_*J_{p-1} S^{2p^jn} \times p_q^{-1}(*) \arrow[r, "s \pi_1"] \arrow[d, "\mu"]& J_{p-1} S^{2p^jn} \arrow[d, equal, "\id"]\\
	p_q^{-1}(*) \arrow[r]& I_q \arrow[r, "p_q"]& \Omega S^{2p^jn+1}
	\end{tikzcd}
	$$
	where we recall from the discussion preceding this proof that the bottom row is a fibration replacement of $q$ (in particular $B_{p^{j+1}-1} \simeq I_q$ and $B_{p^j-1} \simeq p_q^{-1}(*)$) and the top row is obtained from the path fibration.
	
	
	\vspace{1em}

	The original map $b = b(B_{p^{j+1}-1}, q, \xi): \Sigma^{|b|}G_j \to G_j$ is the map associated to the bottom fibration. The map $b'=b(P_* J_{p-1}S^{2p^jn} \times B_{p^j-1}, s  \pi_1, 0)$ is the map associated to the top fibration. By passing to Thom spectra and using the naturality of the construction $b(E_{p-1}, q, \xi)$ we obtain a commutative diagram
	$$
	\begin{tikzcd}
	\Sigma^{|b|} \Sigma^\infty \Omega J_{p-1}S^{2p^jn}_+ \land G_j \arrow[r, "b'"] \arrow[d, "\mu"]&\Sigma^\infty \Omega J_{p-1}S^{2p^jn}_+ \land G_j \arrow[d, "\mu"] \\
	\Sigma^{|b|}G_j \arrow[r, "b"] & G_j.
	\end{tikzcd}
	$$
	Let us now rewrite $b'$ in a nicer form. We have
	$$ b(P_* J_{p-1}S^{2p^jn} \times B_{p^j-1}, s \circ \pi_1, 0) = b(P_* J_{p-1}S^{2p^jn}, s, 0) \land \id_{G_j} $$
	by Lemma 3.20 in \cite{devinatz1988nilpotence}, which is not hard to prove. By definition $b'' = b(P_*\Omega S^{2p^jn+1}, s, 0)$ so the equality can be expressed succinctly as $b' = b'' \land \id_{G_j}$. The commutative diagram from above yields the diagram
	
	$$
	\begin{tikzcd}
	\Sigma^{|b|} G_j \arrow[r] &\Sigma^{|b|}\Sigma^{\infty} \Omega^2 S^{2p^jn+1}_+ \land G_j \arrow[r, "b'' \land \id"] \arrow[d, "\mu"]& \Sigma^{\infty} \Omega^2 S^{2p^jn+1}_+ \land G_j \arrow[d, "\mu"] \\
	&\Sigma^{|b|}G_j \arrow[r, "b"] & G_j
	\end{tikzcd}
	$$
	in which the top row is the map $\gamma \land \id$. The bottom composition is the map $b$. Therefore $b= \mu \circ (\gamma \land \id)$ as required.
	\end{proof} 
		
	As we have remarked earlier, the reason for immense usefulness of this factorization of $b$ lies in the very nice splitting structure of the spectrum $\Sigma^{\infty}\Omega^2 S^{2m+1}_+$. This result is known under the name of Snaith's splitting.
	
	\begin{theorem}[Snaith's splitting of $S^{2m-1}_+$]There is a decomposition
		$$ \Sigma^{\infty} \Omega^2 S^{2m+1}_+ \simeq \bigvee_{k=0}^\infty D_k$$
		where the $D_k$ are finite spectra.
	\end{theorem}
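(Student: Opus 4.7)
The plan is to deduce this from Theorem \ref{Snaith} applied with $n=2$ and $X = S^{2m-1}$, so that $\Sigma^2 S^{2m-1} = S^{2m+1}$, and then absorb the disjoint basepoint via the splitting $\Sigma^\infty Y_+ \simeq \Sigma^\infty Y \vee S$. Since the essay cites Theorem \ref{Snaith} as an unproven input, strictly nothing further is needed; nonetheless, I outline how one would prove Snaith's splitting itself in the form required here.

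One models $\Omega^2 \Sigma^2 X$ for connected $X$ by May's little $2$-cubes configuration space $C_2 X := \coprod_{k \geq 0} C_2(k) \times_{\Sigma_k} X^{\land k}/\!\sim$, where $C_2(k)$ is the configuration space of $k$ ordered little $2$-cubes in $I^2$ and $\sim$ collapses particles at the basepoint. May's approximation theorem provides a weak equivalence $C_2 X \simeq \Omega^2 \Sigma^2 X$, and $C_2 X$ carries a natural length filtration $F_0 C_2 X \subset F_1 C_2 X \subset \cdots$ whose successive quotients are the extended powers $E_k := C_2(k)_+ \land_{\Sigma_k} X^{\land k}$. Specialising to $X = S^{2m-1}$ gives $X^{\land k} = S^{(2m-1)k}$, and because $C_2(k)$ has the homotopy type of a finite CW complex (via iterated Fadell--Neuwirth fibrations), each $E_k$ is a finite spectrum after applying $\Sigma^\infty$.

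The heart of the argument is to split the filtration stably. One constructs James--Hopf type maps $j_k : \Sigma^\infty C_2 X_+ \to \Sigma^\infty E_k$ for every $k$, generalising Definition \ref{JamesHopf}, and assembles them into a single map
$$J : \Sigma^\infty C_2 X_+ \to \bigvee_{k \geq 0} \Sigma^\infty E_k.$$
It then suffices to show $J$ is an equivalence. Because both sides are bounded below and of finite type, a Whitehead-style argument reduces this to checking an isomorphism on mod-$p$ homology for every prime $p$, which in turn follows from each $j_k$ realising $E_k$ as a stable summand of the filtration subquotient $\Sigma^\infty F_k C_2 X / F_{k-1} C_2 X$.

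The main obstacle is constructing the $j_k$ and verifying that they effect the splitting: the unstable quotient maps $F_k C_2 X \to E_k$ do not extend to all of $C_2 X$ before stabilisation, which is precisely why Snaith's splitting is a genuinely stable phenomenon and fails at the space level. Standard approaches use either an explicit combinatorial model such as the Barratt--Eccles operad to construct the stable ``choose $k$ particles'' operations, or equivariant stable splitting results asserting that extended powers of free $\Sigma_k$-spectra split off as stable summands of their associated filtered spectra. Once the $j_k$ are in hand the remaining verifications are formal.
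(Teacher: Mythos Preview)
The paper does not prove this theorem at all; it is stated as an external input and the reader is referred to Cohen's monograph for details. Your deduction of the special case from the general Theorem~\ref{Snaith} (with $n=2$, $X=S^{2m-1}$, together with $\Sigma^\infty Y_+ \simeq \Sigma^\infty Y \vee S$) is exactly how the paper implicitly treats it, so in that sense your approach and the paper's coincide.

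Your additional outline of how one would actually prove Snaith's splitting---via May's little-cubes approximation $C_2 X \simeq \Omega^2\Sigma^2 X$, the length filtration with extended-power subquotients, and stable James--Hopf maps splitting that filtration---goes well beyond anything the paper attempts and is a correct sketch of the standard argument. One small caution: the general Theorem~\ref{Snaith} as stated in the paper asserts the $D_k$ are finite for \emph{any} based CW complex $X$, which is not literally true without a finiteness hypothesis on $X$; your argument for finiteness of the $E_k$ correctly uses that $S^{2m-1}$ is a finite complex and that the configuration spaces $C_2(k)$ have finite CW type, so you are on solid ground even though the paper's general statement is slightly sloppy.
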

	This splitting has several convenient properties we will use without proof in the discussion below. A reader is referred to \cite{cohen1981odd} for a comprehensive treatment of the spaces of the form $\Omega^2 S^k$ and their suspension spectra.
	
	\vspace{1em}
	
	We are interested to see how $\gamma:S^{2mp-2} \to \Sigma^{\infty} \Omega^2 S^{2m+1}_+$ interacts with this splitting structure. It turns out that up to $H\F_p$-equivalence this interaction is as nice as possible.
	
	\begin{lemma}\label{Dpfactorization}
		The map $\gamma:S^{2mp-2} \to \Sigma^\infty \Omega^2 S^{2m+1}_+$ factors through $D_p$.
	\end{lemma}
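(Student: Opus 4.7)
The plan is to exploit naturality of the $b$-construction together with the compatibility of the Snaith splitting with the canonical ``length'' filtration of $\Omega^2 S^{2m+1}$. Recall that the splitting $\Sigma^\infty \Omega^2 S^{2m+1}_+ \simeq \bigvee_{k} D_k$ comes from a filtration $F_0 \subset F_1 \subset F_2 \subset \dots$ of $\Omega^2 S^{2m+1}$ (by ``word length'' in the little-$2$-discs model), with $D_k \simeq \Sigma^\infty F_k/F_{k-1}$, and that the inclusion $\Omega J_{p-1} S^{2m} \hookrightarrow \Omega^2 S^{2m+1}$ respects this filtration so that $\Sigma^\infty \Omega J_{p-1} S^{2m}_+$ inherits a compatible splitting into pieces indexed by $k \in \{0,1,\dots,p-1\}$ and a ``tail'' beginning at $k=p$.

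First, I would dissect $\gamma$ according to its defining factorisation
$$S^{2mp-2} \hookrightarrow \Sigma^{2mp-2}\Sigma^\infty \Omega J_{p-1}S^{2m}_+ \xrightarrow{b''} \Sigma^\infty \Omega J_{p-1}S^{2m}_+ \hookrightarrow \Sigma^\infty \Omega^2 S^{2m+1}_+,$$
and observe that the leftmost map is the bottom-cell inclusion at the top James-filtration level. Thus it suffices to analyse where $b'' = \delta_2 \circ \theta_1^{-1}\circ \delta_1 \circ \theta_{p-1}^{-1}$ sends this bottom cell under the Snaith splitting. Each $\theta_k$ is an equivalence between a filtration quotient $E_k^\xi/E_{k-1}^\xi$ and $\Sigma^{2mk}E_0^\xi$; this is precisely the identification identifying the $k$-th associated graded of the $E_\bullet$ filtration with the $k$-th stratum of the Snaith filtration on $\Sigma^\infty \Omega J_{p-1} S^{2m}_+$. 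Consequently, $\theta_{p-1}^{-1}$ moves the suspended bottom cell $\Sigma^{2mp-2}S$ into the $(p-1)$-st stratum, and the connecting map $\delta_1$ pushes it into the $E_{p-2}^\xi$-direction without leaving the filtration-$p$ piece; symmetrically $\theta_1^{-1}$ and $\delta_2$ complete the transfer, so the composite lands in what corresponds to filtration level exactly $p$ in the Snaith decomposition.

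Second, I would rule out contributions to other $D_k$. For $k<p$, the projection $\Sigma^\infty \Omega^2 S^{2m+1}_+ \to D_k$ is null on $\gamma$ by a connectivity count: working $p$-locally, the bottom cell of the $k$-th Snaith piece below $D_p$ lies in strictly lower dimension than $2mp-2$ is accessible by $k$ factors of $(2m-1)$, but the only obstruction classes detecting $\gamma$ on $D_k$ for $k<p$ vanish because $b''$ is built out of the specific filtration with exactly $p-1$ nontrivial James stages plus one boundary step. For $k>p$, one uses that $\Sigma^\infty \Omega J_{p-1}S^{2m}_+ \hookrightarrow \Sigma^\infty \Omega^2 S^{2m+1}_+$ maps into the union of Snaith strata of index at most $p-1$, so after the single $2mp-2$-degree boundary-suspension encoded in $\delta_1,\delta_2$ (which can raise filtration by at most one), nothing reaches $D_k$ for $k>p$.

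The main obstacle is the middle step: proving rigorously that the $E_\bullet^\xi$-filtration on $\Sigma^\infty \Omega J_{p-1}S^{2m}_+$ induced from the James filtration of the base matches the Snaith filtration on the nose through the equivalences $\theta_k$, and that the boundary maps $\delta_1,\delta_2$ shift Snaith filtration by exactly one. This identification is essentially an unravelling of the naturality of the $b$-construction (\cite[Proposition 3.15]{devinatz1988nilpotence}) combined with a direct comparison of the two filtrations on the level of Thom spectra over the James filtration; once it is in place, both the existence of a factorisation through $D_p$ and the triviality on the other wedge summands are immediate from the splitting.
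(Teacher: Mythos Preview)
Your proposal rests on a conflation of two quite different filtrations. The $E_\bullet^\xi$-filtration appearing in the definition of $b'' = b(P_*J_{p-1}S^{2m}, s, 0)$ is obtained by pulling back the James filtration of the \emph{base} $J_{p-1}S^{2m}$ along the path fibration $s$; by Lemma~\ref{cofibre} its subquotients $E_k^\xi/E_{k-1}^\xi$ are each equivalent to $\Sigma^{2mk}\Sigma^\infty(\Omega J_{p-1}S^{2m})_+$ --- suspensions of the \emph{entire fibre}, not individual Snaith pieces of it. The Snaith filtration, by contrast, is an internal length filtration on $\Omega^2 S^{2m+1}$. There is no comparison map identifying these, so the assertions that ``$\theta_{p-1}^{-1}$ moves the bottom cell into the $(p-1)$-st Snaith stratum'' and that ``$\delta_1,\delta_2$ shift Snaith filtration by exactly one'' have no basis. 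Your argument for $k>p$ fails even more concretely: $\Omega J_{p-1}S^{2m}$ is the loop space of a finite complex and has homology in arbitrarily high degrees, so $\Sigma^\infty(\Omega J_{p-1}S^{2m})_+$ cannot map into any finite union of Snaith strata of $\Sigma^\infty\Omega^2 S^{2m+1}_+$.

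The paper's argument is of a different character and does not track filtrations at all. One computes $H_*(\Omega^2 S^{2m+1};\F_p)$ together with its weight grading and observes that $H_*(D_k;\F_p)=0$ unless $k\equiv 0$ or $1\pmod p$. This yields a map $\varepsilon_+\vee g:\Sigma^\infty\Omega^2 S^{2m+1}_+\to S^{2m-1}_+\vee D_p$ which is an $H\F_p$-equivalence through dimension $2mp+2m-3$, a range large enough to see $\gamma$. The component $\varepsilon_+$ onto $S^{2m-1}_+$ is the stabilised evaluation map, and one checks directly from the definition of $b''$ (by analysing $\theta_1$; see \cite[Corollary~3.26]{devinatz1988nilpotence}) that $\varepsilon_+\circ b''$ is null-homotopic. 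Hence $(\varepsilon_+\vee g)\circ\gamma$ lands in $D_p$, and since $\varepsilon_+\vee g$ is an equivalence in the relevant range, $\gamma$ itself is homotopic to a map into $D_p$. The essential ingredient you are missing is this homological reduction of the target to only three summands $D_0\vee D_1\vee D_p$ in the relevant dimensions, together with the separate argument that the $D_0\vee D_1$ component vanishes.
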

	This will allow us to rewrite $b$ in a much more powerful form and establish the contractibility of $b^{-1}G_j$.
	
	\begin{proof}	
	We first investigate the relationship between the (mod $p$) homologies of $\Sigma^{\infty} \Omega^2 S^{2m+1}_+$ and those of the spectra $D_k$ on the other side of the Snaith's splitting. It can be shown that the (mod $p$) homology of $\Omega^2 S^{2m+1}_+$ is
	$$H_*(\Omega^2 S^{2m+1}_+ ; \F_p) \cong \Lambda \left[ x_{2m-1}, x_{2mp-1}, \dots \right] \otimes \F_p \left[y_{2mp-2} ,y_{2mp^2-2}, \dots \right]$$
	where the subscripts $i$ indicate the homological degree of the classes $x_i$ and $y_i$ and $\Lambda$ denotes the exterior algebra on the given generators. $\Sigma^{\infty} \Omega^2 S^{2m+1}_+$ is a suspension spectrum so it has the same homology.
	
	\vspace{1em}
	
	The vector space $H_*(\Omega^2 S^{2m+1}_+; \F_p)$ is naturally graded in which the grading is given by the homological degree. To describe the homology of the spectra $D_k$ as a subspace, we introduce another grading by giving \emph{weight}
	$$ \text{wt}(x_{2mp^i-1}) = \text{wt}(y_{2mp^i-2}) = p^i$$
	to the generators and extending using $\text{wt}(ab)=\text{wt}(a) + \text{wt}(b)$. With this notation $H_*(D_k; \F_p) \subset H_*(\Omega^2 S^{2m+1}_+; \F_p)$ is the vector space spanned by all monomials of weight $k$.
	
	\vspace{1 em}
	
	In particular, observe that $\text{wt}(x_{2m-1})=1$ and every other generator of $H_*(\Omega^2S^{2m+1}_+ ; \F_p)$ has weight divisible by $p$. Since $x_{2m-1}$ is an element of the exterior algebra we have $x_{2m-1}^2=0$. Hence all monomials in $H_*(\Omega^2 S^{2m+1}_+ ; \F_p)$ have weight $0$ or $1 \pmod p$ and $H_*(D_k; \F_p) =0$ unless $k \equiv 0$ or $1 \pmod p$.
	
	\vspace{1em}
	
	To see that $\gamma$ factors through $D_p$ consider the map
	$$\Sigma^{\infty} \Omega^2 S^{2m+1}_+ \xrightarrow{\simeq} \bigvee_{k=0}^\infty D_k \xrightarrow{\varepsilon_+ \vee g} D_0 \vee D_1 \vee D_p = S^{2m-1}_+ \vee D_p$$
	using the fact that $D_0=S^0$ and $D_1=S^{2m-1}$. By homology considerations above, this is a $H \F_p$-equivalence in dimensions up to $(2m-1)+(2mp-2)=2mp+2m-3$. It is another known result about the Snaith's splitting that the map $\varepsilon_+$ is obtained from the evaluation map of spaces 
	\begin{align*}
	S^1 \land S^1 \land \Omega^2 S^{2m+1} &\to S^{2m+1}\\
	(x, y, f) &\mapsto f(y)(x)
	\end{align*}
	after passing to $\bf hSp$.
	Precomposing with $\gamma$ we obtain
	$$S^{2mp-2} \hookrightarrow \Sigma^{2mp-2} \Sigma^{\infty} \Omega^2 S^{2m+1}_+ \xrightarrow{b''} \Sigma^{\infty} \Omega^2 S^{2m+1}_+ \xrightarrow{\varepsilon_+ \vee g} S^{2m-1}_+ \vee D_p.$$
	Now it turns out that $\varepsilon_+ \circ b''$ is null-homotopic. This can be seen by expanding the definition of $b''$ as the composition $\delta_2 \circ \theta_1^{-1}\circ \delta_1 \circ \theta_{p-1}^{-1}$ where these maps are associated to the triple $(P_*\Omega S^{2m+1}, s, 0)$ and studying $\theta_1$. See \cite[Corollary 3.26]{devinatz1988nilpotence} for details.
	
	Since $\varepsilon_+ \circ b''$ is null-homotopic, it follows that $\gamma$ is homotopic to a map into $D_p$ as required.
	\end{proof}
	Everything we do in the rest of the proof only concerns the homotopy class of $\gamma$, so we without loss of generality assume that $\gamma: S^{2mp-2} \to D_p$ lands in $D_p$.
	
	\vspace{1em}
	
	Suspensions of the map $\gamma: S^{2mp-2} \to D_p$ can be used to create a telescope
	$$ S \xrightarrow{\gamma} \Sigma^{-|b|}D_p \xrightarrow{\gamma} \Sigma^{-2|b|} D_{2p} \xrightarrow{\gamma} \cdots.$$
	Its homotopy colimit is $\varinjlim \Sigma^{-N|b|}D_{Np} = H\F_p$
	the (mod $p$) Eilenberg-Mac Lane spectrum due to Mahowald \cite{mahowald1977new}.
	
	\vspace{1em}
	
	Recall that the map $b$ factors as
	$ S^{2mp-2} \land G_j \xrightarrow{\gamma \land \id } \Omega^2 S^{2m+1}_+ \land G_j \xrightarrow{\mu} G_j$
	by Lemma \ref{factorizationOfb}. We have just shown that $\gamma$ lands in $D_p$ and therefore $b$ can be rewritten as
	$$ S^{2mp-2} \land G_j \xrightarrow{\gamma \land \id} D_p \land G_j \hookrightarrow \Sigma^{\infty} \Omega^2 S^{2m+1}_+ \land G_j \xrightarrow{\mu} G_j. $$
	It follows that for $N \in \N$ we can write $b^N$ as
	$$ S^{N(2mp-2)} \land G_j \xrightarrow{\gamma^{\land N} \land \id} D_p^{\land N} \land G_j \hookrightarrow (\Sigma^{\infty} \Omega^2 S^{2m+1}_+)^{\land N} \land G_j \xrightarrow{\mu^{\land N}} G_j.$$
	We now use another property of the Snaith's splitting. The loop space $\Omega^2S^{2m+1}$ is an $H$-space. Passing to $\bf hSp$ the $H$-space structure induces a multiplication map $ \Sigma^{\infty} \Omega^2S^{2m+1}_+ \land \Sigma^{\infty} \Omega^2S^{2m+1}_+ \to \Sigma^{\infty} \Omega^2S^{2m+1}_+ $ compatible with the Snaith's splitting. In particular, the multiplication induces the maps $D_{ip} \land D_{jp} \to D_{(i+j)p}$ such that the diagrams
	$$
	\begin{tikzcd}
	S^{i|b|} \land S^{j|b|} \arrow[d, "\gamma^i \land \gamma^j"] \arrow[r, equal, "\id"]& S^{(i+j)|b|} \arrow[d, "\gamma^{i+j}"]\\
	D_{ip} \land D_{jp} \arrow[r]& D_{(i+j)p}	
	\end{tikzcd}
	$$
	commute. Therefore $b^N$ simplifies to
	$$ S^{N(2mp-2)} \land G_j \xrightarrow{\gamma^N \land \id} D_{Np} \land G_j \hookrightarrow \Sigma^{\infty} \Omega^2 S^{2m+1}_+ \land G_j \xrightarrow{\mu} G_j.$$ 
	This can be restated by saying that the diagram
	$$\begin{tikzcd}
	G_j \arrow[r, equal, "\id"] \arrow[d, "\gamma \land \id"]& G_j \arrow[d, "b"] \\
	\Sigma^{-|b|}D_p \land G_j \arrow[r, "\mu"] \arrow[d, "\gamma \land \id"]& \Sigma^{-|b|}G_j \arrow[d, "b"] \\
	\Sigma^{-2|b|}D_{2p} \land G_j \arrow[r, "\mu"] \arrow[d, "\gamma \land \id"]& \Sigma^{-2|b|}G_j \arrow[d, "b"] \\
	\vdots&\vdots
	\end{tikzcd}$$
	commmutes. By passing to the homotopy colimits of both columns we see that the map $G_j \to b^{-1}G_j$ factors through $\varinjlim \Sigma^{-N|b|}D_{Np} \land G_j = H\F_p \land G_j$. Hence we can consider an enlarged commutative diagram
	$$\begin{tikzcd}
	G_j \arrow[d, "b"] \arrow[r]& G_j \land H\F_p \arrow[r] \arrow[d, "b \land \id"] & b^{-1}G_j \arrow[d, equal, "\id"] \\
	\Sigma^{-|b|}G_j \arrow[d, "b"] \arrow[r]& \Sigma^{-|b|}G_j \land H\F_p \arrow[r] \arrow[d, "b \land \id"] & b^{-1}G_j \arrow[d, equal, "\id"] \\
	\vdots&\vdots&\vdots
	\end{tikzcd}$$
	where the vertical maps are induced by $b$. This allows us to pass to the homotopy colimits of the three columns. As always, using that the smash procuct commutes with colimits, we obtain a map
	$$  b^{-1}G_j \to b^{-1}G_j \land H\F_p  \to b^{-1}G_j.  $$
	This is the identity map on $b^{-1}G_j$ which can be seen by considering only the first and last columns of the commutative diagram.
	
	\vspace{1em}
	
	To say something about $ b^{-1} G_j \land H\F_p$ in the middle, let us study the map that $b$ induced on (mod $p$) homology. Recall that $b=\delta_2 \circ \theta_1^{-1} \circ \delta_1 \circ \theta_{p-1}^{-1}$ where $\theta_1$ and $\theta_{p-1}$ are equivalences. The map $\delta_2$ arises from the cofibre sequence
	$$ \dots \to \Sigma^{-1} G_j \to \Sigma^{-1} G_{j+1} \to \Sigma^{-1} G_{j+1} / G_j \xrightarrow{\delta_2} G_j \to \cdots.$$
	Recall from \ref{homologyofBk} that $H_*(B_{p^j-1};\F_p)$ is the $\F_p\left[x_1, \dots, x_{n-1} \right]$-submodule of $\F_p\left[x_1, \dots, x_n \right]$ generated by $1, x_n, \dots, x_n^{p^j-1}$, the inclusion $B_{p^j-1} \hookrightarrow B_{p^{j+1}-1}$ is a monomorphism on (mod $p$) homology. By exactness and the Thom isomorphism theorem $\delta_2$ is the zero map on (mod $p$) homology. Thus ${H\F_p}_*(b)=0$ too and $b^{-1}G_j \land H\F_p$ is contractible. This gives the factorization 
	$$b^{-1}G_j \to * \to b^{-1}G_j$$
	of the identity map on $b^{-1}G_j$. It follows that $b^{-1}G_j$ is contractible, which is exactly what we wanted to prove.
	
	\subsection{Conclusion}\label{conclusion}
	In Step I and Step II we have proven two major propositions.
	\begin{proposition}\label{stepI}
		Let $X(n+1)_*(\alpha)$ be nilpotent. Then $G_j \land \alpha^{-1}R$ is contractible for sufficiently large $j$.
	\end{proposition}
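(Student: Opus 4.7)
The plan is to prove contractibility of $G_j \land \alpha^{-1}R$ by showing that its homotopy groups vanish, and to detect this vanishing using a non-canonical Adams spectral sequence for $\pi_*(G_j \land R)$ based on $X(n+1)$ whose $E_2$-page admits vanishing lines of slope $\tfrac{1}{2p^jn-1}$. Since this slope can be made arbitrarily small by taking $j$ large, we will be able to compare it against the Adams filtration slope of $\alpha$ itself and kill every potential surviving class in the telescope.

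First I would build the Adams resolution geometrically rather than algebraically. Starting from the fibration $B_{p^j-1} \to \Omega SU(n+1) \xrightarrow{h} \Omega S^{2p^jn+1}$ produced in Lemma \ref{OGfibration}, I would form the spaces $H_s = * \times B \times \cdots \times B \times \Omega SU(n+1)$ with $s$ copies of $B := \Omega S^{2p^jn+1}$, together with the maps $i_{s,t} \colon H_s \to H_{s+1}$ that duplicate the $t$\nobreakdash-th coordinate (with the last one ``duplicated'' via $h$). Quotienting out by the images of these maps produces cofibre sequences $Y_s \to L_s \to Y_{s+1}$ in $\mathbf{Top}$. Projecting these onto $\Omega SU(n+1)$ and then into $BU$, I can Thomify and shift to get homotopy fibre sequences $X_{s+1} \to X_s \xrightarrow{f_s} K_s$ of spectra with $X_0 = G_j$; this is the desired resolution.

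Next I would establish that $K_s$ is $(2p^jn-1)s$\nobreakdash-connected. The key is an inductive computation of $\widetilde{H}_*(L_s;\F_p)$. Describing the collapsed subspace $T_s \subset H_s$ as the union of $T_{s-1} \times B$ and a copy of $H_{s-1}$ glued along $T_{s-1}$ yields a pushout square, producing a cofibre sequence $L_{s-1} \to L_{s-1} \land B_+ \to L_s$. The attaching map differs from $\id \land *_+$ only by an automorphism of the relevant homology group, so the long exact sequence splits into short exact ones and iterated Künneth gives $\widetilde{H}_*(L_s;\F_p) \cong \widetilde{H}_*(B^{\land s};\F_p) \otimes H_*(\Omega SU(n+1);\F_p)$. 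Since $B = \Omega S^{2p^jn+1}$ is $(2p^jn-1)$\nobreakdash-connected, iterated Hurewicz (working $p$\nobreakdash-locally) shows $B^{\land s}$ is $(2p^jn-1)s$\nobreakdash-connected, and the Thom isomorphism transfers this to $K_s$. This is the technical heart of the proof and I expect it to be the main obstacle, since it requires handling the subtle difference between $\phi_s$ and $\id \land *_+$ carefully.

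Finally I would smash the whole resolution with $R$ and extract the Adams spectral sequence for $\pi_*(G_j \land R)$. Connectivity of $R$ (by hypothesis, $N$\nobreakdash-connected for some $N$) combined with the bound above gives $E_2^{s,t}(G_j \land R) = 0$ for $t-s \leq (2p^jn-1)s + N$, i.e.\ a vanishing line of slope $\tfrac{1}{2p^jn-1}$. Since $X(n+1)_*(\alpha)$ is nilpotent, after replacing $\alpha$ by a power we may assume $X(n+1)_*(\alpha) = 0$, so $\alpha$ has Adams filtration $\geq 1$ and any detecting class for $\alpha$ lies on the line of slope $\tfrac{1}{d}$ through the origin. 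Choose $j$ so large that $\tfrac{1}{2p^jn-1} < \tfrac{1}{d}$. Then for any $\beta \in \pi_*(G_j \land R)$ detected at $(u,v)$, the class $\beta' (\alpha')^m$ detecting $\beta \alpha^m$ lies at $(u+m, v+md)$, which is eventually above the vanishing line; hence $\beta \alpha^m = 0$ in $\pi_*(G_j \land R)$, so $\beta = 0$ in $\pi_*(G_j \land \alpha^{-1}R)$. As $\beta$ was arbitrary, $G_j \land \alpha^{-1}R$ is contractible.
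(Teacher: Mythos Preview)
Your proposal is correct and follows essentially the same approach as the paper's proof: the same geometric Adams resolution built from the spaces $H_s$ and maps $i_{s,t}$, the same inductive computation of $\widetilde{H}_*(L_s;\F_p)$ via the pushout square and the observation that $\phi_s$ differs from $\id \land *_+$ only by a homology automorphism, and the same vanishing-line argument compared against the Adams filtration of $\alpha$. There is nothing to add.
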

	\begin{proposition}\label{stepII}
		$\langle G_j \rangle = \langle G_{j+1} \rangle$ for any $j$.
	\end{proposition}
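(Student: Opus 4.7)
The plan is to compare $G_j$ and $G_{j+1}$ via a carefully chosen self-map of $G_j$ whose telescope measures the obstruction to Bousfield equivalence. Concretely, I would try to construct a self-map $b : \Sigma^{2mp-2} G_j \to G_j$ (with $m = p^j n$) such that one has a splitting $\langle G_j \rangle = \langle G_{j+1} \rangle \vee \langle b^{-1} G_j \rangle$, and then show that $b^{-1} G_j \simeq *$. The inequality $\langle G_j \rangle \geq \langle G_{j+1} \rangle$ is clear since the map $G_j \to G_{j+1}$ comes from $B_{p^j-1} \hookrightarrow B_{p^{j+1}-1}$, so all the work is in producing a map in the opposite direction.

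For the construction of $b$, I would exploit the fibration $B_{p^j-1} \to B_{p^{j+1}-1} \xrightarrow{q} J_{p-1} S^{2m}$ of Lemma \ref{OGfibration}. Pulling back $q$ along the James filtration $J_0 S^{2m} \subset \cdots \subset J_{p-1} S^{2m}$ and Thomifying gives a filtration $G_j = E_0^\xi \subset \cdots \subset E_{p-1}^\xi = G_{j+1}$. Using that $S^{2m}$ is a co-$H$-space one can lift the stable splitting of $\Sigma^\infty J_{p-1} S^{2m}_+$ to filtration-preserving ``coordinate projections'' $\theta_k : E_k^\xi / E_{k-1}^\xi \to \Sigma^{2mk} G_j$; a mod $p$ homology computation combined with Thom isomorphism and the homology Whitehead theorem should show these are equivalences. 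Two cofibre sequences inside the filtration then yield connecting maps $\delta_1, \delta_2$ with cofibres suspensions of $G_{j+1}$, and I would set $b = \delta_2 \circ \theta_1^{-1} \circ \delta_1 \circ \theta_{p-1}^{-1}$.

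With $b$ in hand, the Bousfield splitting reduces to pure diagram chasing: if $X \land G_j \simeq *$, then inducting along the filtration via the cofibre sequences $X \land E_{k-1}^\xi \to X \land E_k^\xi \to X \land \Sigma^{2mk} G_j$ gives $X \land G_{j+1} \simeq *$, while $X \land b^{-1} G_j \simeq *$ is automatic since smash commutes with homotopy colimits. Conversely, if $X \land G_{j+1} \simeq *$ and $X \land b^{-1} G_j \simeq *$, then $\delta_1, \delta_2$ become equivalences after smashing with $X$, so $\id_X \land b$ does too, hence $X \land G_j \simeq X \land b^{-1} G_j \simeq *$.

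The hard part will be showing $b^{-1} G_j \simeq *$, and I expect this to be the technical heart of the proof. The plan is to factor $b$ as $\mu \circ (\gamma \land \id)$ where $\mu : \Sigma^\infty \Omega^2 S^{2m+1}_+ \land G_j \to G_j$ comes from the loop-concatenation action on the fibration-replacement of $h : \Omega SU(n+1) \to \Omega S^{2m+1}$ and $\gamma : S^{2mp-2} \to \Sigma^\infty \Omega^2 S^{2m+1}_+$ is built by applying the same $b$-construction to the path fibration. Snaith's splitting $\Sigma^\infty \Omega^2 S^{2m+1}_+ \simeq \bigvee_k D_k$ together with a weight grading on $H_*(\Omega^2 S^{2m+1}; \F_p) = \Lambda[x_{2mp^i - 1}] \otimes \F_p[y_{2mp^i - 2}]$ should force $\gamma$ to land in $D_p$ (everything has weight $0$ or $1 \pmod p$, and the weight-$1$ part is one-dimensional), and iterating together with Mahowald's identification $\holim \Sigma^{-N|b|} D_{Np} \simeq H\F_p$ exhibits $\id_{b^{-1} G_j}$ as a factor through $H\F_p \land G_j$. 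Finally, using the description of $H_*(F_k; \Z)$ from Lemma \ref{homologyofBk}, the inclusion $B_{p^j - 1} \hookrightarrow B_{p^{j+1} - 1}$ is injective on mod $p$ homology, which by exactness forces $\delta_2$ (and hence $b$) to vanish in $H\F_p$-homology, collapsing the factorization to $b^{-1} G_j \to * \to b^{-1} G_j$ and yielding contractibility.
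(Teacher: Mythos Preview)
Your proposal is correct and follows essentially the same route as the paper: construct $b = \delta_2 \circ \theta_1^{-1} \circ \delta_1 \circ \theta_{p-1}^{-1}$ from the pulled-back James filtration, prove $\langle G_j \rangle = \langle G_{j+1} \rangle \vee \langle b^{-1} G_j \rangle$, then show $b^{-1} G_j \simeq *$ via the factorization through $\Sigma^\infty \Omega^2 S^{2m+1}_+$, Snaith's splitting, Mahowald's identification of the telescope with $H\F_p$, and the vanishing of $H\F_p_*(\delta_2)$. One small slip: the inequality $\langle G_j \rangle \geq \langle G_{j+1} \rangle$ does \emph{not} follow merely from the existence of a map $G_j \to G_{j+1}$---you need the filtration argument you give later (and the paper gives the same one).
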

	The proof of the weak ring spectrum form of the nilpotence theorem now follows relatively easily. Consider the diagram
	$$
	\begin{tikzcd}
	S^d \arrow[rr, "\eta"] \arrow[dd, "\alpha"] \arrow[ddrr, "h(\alpha)"]& &MU \land S^d \arrow[dd, "\id_{MU} \land \alpha"]\\
	\\
	R \arrow[rr, "\eta \land \id_{R}"] & &MU \land R
	\end{tikzcd}
	$$
	defining the $MU$ Hurewicz's homomorphism $h$. Since $\alpha \in \ker h$, we have that $h(\alpha)$ is null-homotopic and so $MU_*(\alpha)=0$. Since $MU$ is the homotopy colimit $MU = \varinjlim X(n)$ of the spectra $X(n)$ by Lemma \ref{XnMU} we must have $X(n+1)_*(\alpha)=0$ for sufficiently large $n$. By Proposition \ref{stepI} it follows that $G_j \land \alpha^{-1}R$ is contractible for sufficiently large $j$. Using the Bousfield equivalence in the Proposition \ref{stepII} inductively, we can descend from $G_j$ to $G_0$ and thus show that $G_0 \land \alpha^{-1}R = X(n) \land \alpha^{-1}R$ is contractible as well. Smash products commute with homotopy colimits so $X(n) \land \alpha^{-1}R$ is the homotopy colimit of the diagram
	$$  X(n) \land R \xrightarrow{\id_{X(n)} \land \alpha} X(n) \land \Sigma^{-d}R \xrightarrow{\id_{X(n)} \land \alpha} \cdots.$$
	We take homotopy groups. Because $\pi_*(X(n)\land \alpha^{-1}R)=0$ by contractibility, it follows that $X(n)_*(\alpha)$ is nilpotent.
	
	\vspace{1em}
	
	We can iterate the above procedure $n$ times to obtain that $X(1)_*(\alpha)$ is nilpotent. But $X(1)=S$ so $X(1)_*(\alpha)=\alpha$. Therefore $\alpha$ is nilpotent. $\blacksquare$
	
	\section{Remarks about the proof}	
	The heart of the proof lies in establishing the Bousfield equivalence of classes $\langle G_j \rangle$ and $\langle G_{j+1} \rangle$. This represents one of the $\omega^2$ tiny steps between $MU$ and $S$.
	
	\vspace{-0.6em}
	
	\begin{center}
		\includegraphics[width=0.9\textwidth]{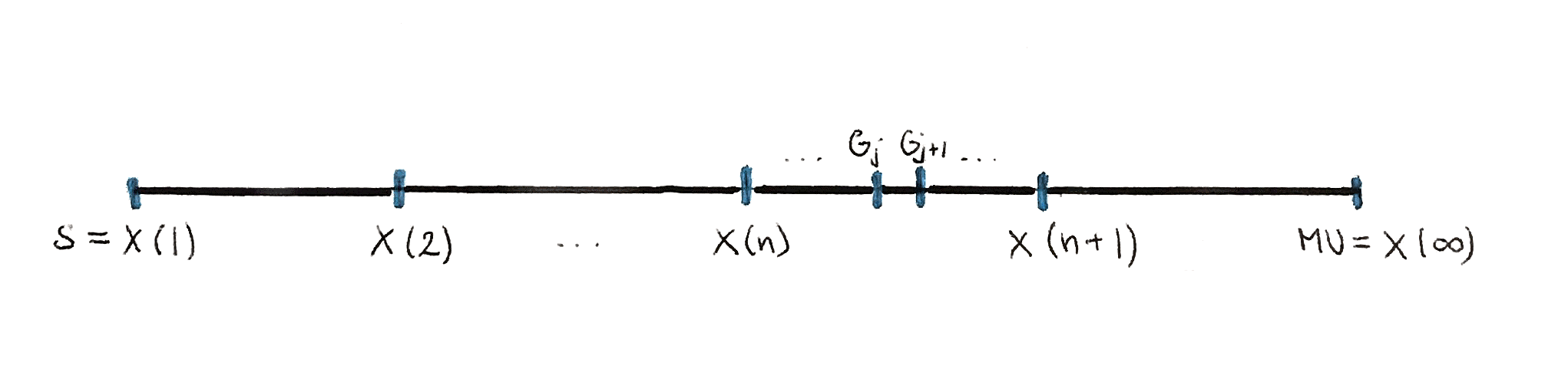}
	\end{center}

	\vspace{-2em}
	
	\noindent One could try to simplify the argument by only making $\omega$ bigger steps of the form $X(n+1) \rightsquigarrow X(n)$ or even a single jump $MU \rightsquigarrow S$. This fails. Descending to the level of $G_j$ is necessary because something remarkable happens in this setting: $\langle G_j \rangle = \langle G_{j+1} \rangle$. The fact that this fails to hold on the level of $X(n)$ or higher suggests that there is little hope of a general argument; any inductive proof of the nilpotence theorem must utilize some specific properties possessed by the $G_j$.
	
	\vspace{1em}
	
	The tool we use to take advantage of these properties is the map $b$ -- it allows us to compare the Bousfield equivalence classes of $G_j$ and $G_{j+1}$. Its definition spans several pages of this essay and may seem convoluted at first, so we try to explain how this map arises naturally. By considering the fibration
	$$ B_{p^j-1} \to B_{p^{j+1}-1} \to J_{p-1}S^{2p^jn+1} $$
	from Lemma \ref{OGfibration} and passing to Thom spectra, one obtains a similar map $G_j \to G_{j+1}$ serving a similar purpose. If one chooses this route, there are other cofibre sequences that need to be established, but the proof proceeds in a similar spirit \cite[see Section 9.5]{ravenel1992nilpotence}. The advantage of the approach taken here and in \cite{devinatz1988nilpotence} is that all of this information is conveniently compressed in the definition of $b$.
	
	\vspace{1em}
	
	The climax of the proof is establishing further properties possessed by $b$. If I tried to distil the insight I have gained by studying the proof in depth, I would say the nilpotence theorem is true because of the existence of the commutative diagram
	$$
	\begin{tikzcd}
	S^{2mp-2} \land G_j \arrow[dr] \arrow[rr, "\gamma \land \id"] & & \Sigma^{\infty} \Omega^2 S^{2m+1}_+ \land G_j \arrow[r, "\mu"]& G_j\\
	& D_p \land G_j \arrow[ur, hook]&& 
	\end{tikzcd}
	$$
	where the long composition is the map $b$. This diagram unites the following information about the map $b$:
	\begin{enumerate}[itemsep=0pt, label=$-$]
		\item Lemma \ref{factorizationOfb}, which provides the factorization in the upper row. This factorization is heavily based on the results from Lemma \ref{OGfibration}.
		\item Lemma \ref{Dpfactorization} following the Snaith's theorem about the splitting structure of $\Sigma^{\infty} \Omega^2  S^{2m+1}_+$, which guarantees that $\gamma$ factors through $D_p$.
	\end{enumerate}
	Once this diagram is established, the contractibility of $b^{-1}G_j$ and hence the proof of the nilpotence theorem follow easily. We also remark that a diagram resembling this one appears in the original proof of the Nishida's theorem and, in fact, the proof presented in this essay was motivated by Nishida's work. We expand on this remark in Chapter 4.
	
	\vspace{1em}
	
	Step II is the only part of the proof of the nilpotence theorem, for which no know alternative proof exists as of 2021.
	
	\subsection{The role of $MU$}
	Intuitively, the nilpotence theorem states that $MU$ detects nilpotence. The spectra $X(n)$ and $F_k$ exhibited in the proof detect nilpotence just as well as the $MU$ does, but this is far from being true for general ring spectra. In this section we inspect the proof and give some sufficient properties for a ring spectrum $T$ to detect nilpotence.
	
	\vspace{1em}
	
	It is clear that if $R$ detects nilpotence and $\langle R \rangle = \langle T \rangle$, then $T$ detects nilpotence too. More generally, we have the following result which has been called the axiomatic nilpotence theorem \cite{mathew}.	
	\begin{theorem}
		Let $R \to T$ be a morphism of ring spectra such that $R$ detects nilpotence. If $T$ is a filtered colimit of spectra $G_j$ such that
		\begin{enumerate}[itemsep=0pt, label=$-$]
			\item the Adams spectral sequence for $G_j \land R$ based on $T$ converges and has vanishing lines of arbitrarily small slopes on the $E_\infty$-page and
			\item $\langle G_j \rangle = \langle R \rangle$ for all $j$,
		\end{enumerate}
		then $T$ detects nilpotence.
	\end{theorem}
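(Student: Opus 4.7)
Let $A$ be a ring spectrum and let $\alpha \in \pi_d(A)$ lie in $\ker h_T$; the goal is to show $\alpha$ is nilpotent, thereby establishing that $T$ detects nilpotence. The plan is to emulate the two-step strategy of Propositions \ref{stepI} and \ref{stepII} in the axiomatic setting, using the vanishing-line hypothesis in place of the geometrically constructed Adams resolution of Step I.

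First I would prove that $G_j \land \alpha^{-1} A$ is contractible for all sufficiently large $j$. By assumption, the Adams spectral sequence for $G_j \land R$ based on $T$ converges with vanishing lines of arbitrarily small slope on its $E_\infty$-page; smashing the underlying Adams resolution with $A$ transfers these vanishing properties to the spectral sequence converging to $\pi_*(G_j \land A)$. Since $\alpha$ dies in $T_*(A)$, it has Adams filtration at least one in the spectral sequence for $A$ based on $T$, so $\alpha$ is detected by some class $\widehat{\alpha} \in E_\infty^{1, d+1}(A)$ lying on the line through the origin of slope $1/d$. Choose $j$ so large that the slope of the vanishing line of the spectral sequence for $G_j \land A$ is strictly less than $1/d$. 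Then, exactly as in the final step of the proof of Proposition \ref{stepI}, for any $\beta \in \pi_*(G_j \land A)$ detected by $\beta' \in E_\infty^{u,v}$, the product $\beta \alpha^m$ is detected by $\beta' \widehat{\alpha}^m \in E_\infty^{u+m, v+md}$, which lies above the vanishing line for all sufficiently large $m$, forcing $\beta \alpha^m = 0$. This says $\pi_*(G_j \land \alpha^{-1} A) = 0$, hence $G_j \land \alpha^{-1} A \simeq *$.

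Next, the hypothesis $\langle G_j \rangle = \langle R \rangle$ upgrades this immediately to $R \land \alpha^{-1} A \simeq *$. Now $\alpha^{-1} A$ is the homotopy colimit of $A \xrightarrow{\alpha} \Sigma^{-d} A \xrightarrow{\alpha} \cdots$, and since smash products commute with homotopy colimits and $\pi_*$ commutes with filtered homotopy colimits, vanishing of $\pi_*(R \land \alpha^{-1} A)$ means every element of $\pi_*(R \land A)$ is annihilated by some power of $\alpha$. Applying this to the image of the unit under $R \simeq R \land S \to R \land A$ gives $R_*(\alpha^m) = 0$ for some $m$, so $\alpha^m \in \ker h_R$. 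Since $R$ detects nilpotence, $\alpha^m$ is nilpotent, and therefore so is $\alpha$, completing the argument.

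The main obstacle is the first step, because the vanishing-line hypothesis is stated on the $E_\infty$-page rather than at any finite stage; the convergence assumption must be exploited carefully to conclude that no class in $\pi_*(G_j \land \alpha^{-1} A)$ survives, and the multiplicative pairing between the spectral sequences for $A$ and $G_j \land A$ must be set up cleanly (via the module structure induced by $A \to G_j \land A$) so that the computation $\beta \alpha^m \leftrightarrow \beta' \widehat{\alpha}^m$ is honest. The remaining steps are essentially formal consequences of Bousfield equivalence and the hypothesis that $R$ already detects nilpotence.
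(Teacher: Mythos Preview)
The paper does not actually prove this theorem; it merely states it as an abstraction of the argument already given, remarking that ``Step I corresponds to the first point and Step II corresponds to the second point of this theorem.'' Your outline correctly reproduces that three-part structure (vanishing-line argument, Bousfield-class transfer, conclusion via $R$), so at the level of strategy you are aligned with the paper.

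There is, however, a genuine gap in your first step. The hypothesis supplies vanishing lines in the $E_\infty$-page of the Adams spectral sequence for $G_j \land R$ based on $T$; you need vanishing lines for $G_j \land A$, where $A$ is the arbitrary test ring spectrum. You write that ``smashing the underlying Adams resolution with $A$ transfers these vanishing properties to the spectral sequence converging to $\pi_*(G_j \land A)$,'' but this does not follow. Smashing an Adams resolution of $G_j \land R$ with $A$ yields a resolution of $G_j \land R \land A$, not of $G_j \land A$, and there is no map $R \land A \to A$ in general. Moreover, even for $G_j \land R \land A$, vanishing lines at $E_\infty$ (as opposed to $E_1$ or $E_2$) need not persist after smashing: they could arise from differentials rather than from connectivity of the layers $K_s$, and differentials do not transfer functorially. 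In the paper's concrete proof this issue never arises because the vanishing lines come from the connectivity of $K_s$ in a resolution of $G_j$ itself, which survives smashing with any connective spectrum; the axiomatic statement as written loses exactly this uniformity. You should either strengthen the hypothesis (e.g.\ demand the vanishing lines arise at a finite page from a resolution of $G_j$ with highly connected fibres) or explain how, under the stated hypothesis alone, one actually obtains the needed vanishing for $G_j \land A$. The remaining steps are, as you say, formal.
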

	The proof presented in this essay verifies these criteria for the map of spectra $X(n) \to X(n+1)$. Step I corresponds to the first point and Step II corresponds to the second point of this theorem.
	
	\vspace{1em}
	
	The sequel \cite{hopkins1998nilpotence} of the paper \cite{devinatz1988nilpotence} gives a refined characterisation of the spectra detecting nilpotence in terms of Morava's $K$-theories.
	
	\section{Smash product form}
	In this section we deduce the smash product form of the nilpotence theorem from the weak version of the ring spectrum form.
	\begin{theorem}[Nilpotence theorem, smash product form]\label{smashProductForm}
		Let $F$ be a finite spectrum and $f: F \to X$ a map of spectra. If $\id_{MU} \land f$ is null-homotopic, then $f$ is smash nilpotent.
	\end{theorem}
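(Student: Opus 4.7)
The plan is to reduce Theorem \ref{smashProductForm} to the weak ring spectrum form (Theorem \ref{weakRingSpectrumForm}) via Spanier-Whitehead duality. Since $F$ is finite, the map $f$ corresponds under duality to an element $\tilde f \in \pi_0(DF \land X)$; the hypothesis $\id_{MU} \land f \simeq 0$ translates into $h(\tilde f) = 0$ in $MU_0(DF \land X)$ (by pre-composing the null-homotopy with the unit $\eta \land \id_F : F \to MU \land F$), and the desired conclusion $f^{\land k} \simeq 0$ is equivalent, via $D(F^{\land k}) \simeq (DF)^{\land k}$, to the vanishing of $\tilde f^{\land k}$ in $\pi_0((DF \land X)^{\land k})$.

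First I would reduce to a finite setting. Writing $X$ as a filtered homotopy colimit of finite subspectra $X_\alpha$ and using compactness of $S$, the map $\tilde f$ factors through some $Y_\alpha := DF \land X_\alpha$. Since filtered homotopy colimits commute with $\pi_*$ and $MU \land -$, the condition $h(\tilde f) = 0$ forces $h(\tilde f_\alpha) = 0$ in $MU_0(Y_\alpha)$ after enlarging $\alpha$ if necessary.

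Next I would construct the appropriate ring spectrum. Choose $a$ large enough that $Z := \Sigma^a Y_\alpha$ is $1$-connected (possible since $Y_\alpha$ is a bounded-below finite spectrum), and form
$$R = \bigvee_{n=0}^\infty Z^{\land n}$$
with unit $S = Z^{\land 0} \hookrightarrow R$ and multiplication $R \land R \to R$ given on the $(n,m)$-summand by the identification $Z^{\land n} \land Z^{\land m} \simeq Z^{\land (n+m)}$. Connectivity of $Z$ forces $Z^{\land n}$ to be at least $(2n-1)$-connected, so only finitely many summands contribute to $\pi_d(R)$ in each degree $d$; each $\pi_d(Z^{\land n})$ is finitely generated because $Z^{\land n}$ is a finite spectrum, hence $R$ is connective and of finite type. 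The element $\alpha := \Sigma^a \tilde f_\alpha \in \pi_a(Z) \subset \pi_a(R)$ lies in $\ker h$, since its Hurewicz image in the $n=1$ summand $MU_a(Z) \cong MU_0(Y_\alpha)$ vanishes by construction and all other components are zero.

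Applying Theorem \ref{weakRingSpectrumForm} yields $\alpha^k = 0$ in $\pi_{ak}(R)$ for some $k$. Unpacking the multiplication, $\alpha^k$ lies in the summand $\pi_{ak}(Z^{\land k}) \cong \pi_0(Y_\alpha^{\land k})$ and is precisely $\tilde f_\alpha^{\land k}$; composing with the inclusion $Y_\alpha^{\land k} \hookrightarrow (DF \land X)^{\land k}$ and passing back through Spanier-Whitehead duality gives $f^{\land k} \simeq 0$. I expect the main obstacle to be the verification that the iterated multiplication in $R$ really identifies $\alpha^k$ with the external smash product $\tilde f_\alpha^{\land k}$ — in particular, checking that the free associative construction $\bigvee_n Z^{\land n}$ yields a bona fide ring spectrum whose multiplication respects the obvious decomposition — while the connectivity choice for $a$ is a routine estimate forced by the hypotheses of Theorem \ref{weakRingSpectrumForm}.
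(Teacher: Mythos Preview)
Your proposal is correct and is essentially the paper's own argument: Spanier-Whitehead duality to replace $f$ by a map out of a sphere, reduction to a finite target via filtered colimits, suspension into positive connectivity, and application of Theorem \ref{weakRingSpectrumForm} to the free associative ring spectrum $R=\bigvee_{n\geq 0}Z^{\land n}$. You have in fact supplied more detail than the paper does on why $R$ is of finite type and on identifying $\alpha^k$ with the smash power $\tilde f_\alpha^{\land k}$.
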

	\begin{proof}
		By Spanier-Whitehead duality we can reduce to the case $F=S$. Indeed, let $DF$ be the Spanier-Whitehead dual of $F$ and let $\widehat{f}: S \to X \land DF$ be the adjoint of $f$. By the properties of Spanier-Whitehead duals we have that
		\begin{center}
				$f$ is smash nilpotent iff $\widehat{f}$ is smash nilpotent
		\end{center}
		and
		\begin{center}
				$\id_{MU} \land f$ is null-homotopic iff $\id_{MU} \land \widehat{f}$ is null-homotopic.
		\end{center}
	
		\noindent It is therefore equivalent to establish the theorem for $\widehat{f}$ and so we can without loss of generality assume that $F=S$.
		
		\vspace{1 em}
		
		Let $f: S \to X$ be a map of spectra such that $\id_{MU} \land f$ is null-homotopic. The diagram
		$$
		\begin{tikzcd}
		S \arrow[r, "f"] \arrow[d, "\eta"]& X \arrow[d, "\eta \land \id_X"] \\
		MU \arrow[r, "\id_{MU} \land f"] & MU \land X
		\end{tikzcd}
		$$
		commutes and any spectrum, in particular $X$, is a homotopy direct limit of its finite subspectra \cite[see A.5.8]{ravenel1992nilpotence}. So $f$ factors as $S \xrightarrow{f} X_\alpha \to X$ through some finite subspectrum $X_\alpha$ of $X$ and similarly the homotopy between $\id_{MU} \land f$ and the constant map factors through $MU \land X_\beta$ for some finite subspectrum $X_\beta$ of $X$. Since the finite subspectra form a direct system we have $X_\alpha = X_\beta$ without loss of generality.
		
		\vspace{1em}
		
		The spectrum $X_\alpha$ is finite so it is $(-d)$-connected for some $d \in \Z$. Then $Y=\Sigma^d X_\alpha$ is $0$-connected and let
		$$ R = \bigvee_{j=0}^\infty Y^{\land j}.$$
		This is a connective ring spectrum of finite type. Under the natural inclusion $Y \hookrightarrow R$ we can consider $S^d \xrightarrow{f} Y \hookrightarrow R$ to be an element of $\pi_d(R)$. Because this element vanishes, it follows that $f$ is smash nilpotent as required.
	\end{proof}

	\section{Ring spectrum form}
	In this short section we deduce the strong version of the ring spectrum form from the smash product form. The adjective \emph{strong} refers to the fact that there are no underlying assumptions on the ring spectrum $R$. In particular, it is not necessarily connective or of finite type while both of these assumptions were required in \ref{weakRingSpectrumForm}. On the other hand, the distinction between the \emph{weak} and the \emph{strong} version is only pedagogical. They are logically equivalent as is witnessed by the circle of implications
	\begin{enumerate}[itemsep=0pt, label=$-$]
		\item Weak ring spectrum form $\implies$ Smash product form (by \ref{smashProductForm})
		\item Smash product form $\implies$ Strong ring spectrum form (by \ref{strongRingSpectrumForm})
		\item Strong ring spectrum form $\implies$ Weak ring spectrum form (trivial)
	\end{enumerate} 
	proven in this essay. With this in mind, we usually only refer to the ring spectrum form and mean the following theorem.
	\begin{theorem}[Nilpotence theorem, ring spectrum form]\label{strongRingSpectrumForm}
		Let $R$ be a ring spectrum and let
		$$h: \pi_*(R) \to MU_*(R)$$
		be the Hurewicz homomorphism. Then every element of $\ker h$ is nilpotent.
	\end{theorem}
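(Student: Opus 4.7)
The plan is to derive the strong ring spectrum form from the smash product form (Theorem \ref{smashProductForm}) by applying it directly to $\alpha$. Let $\alpha \in \ker h$ with $\alpha \in \pi_d(R)$, regarded as a map $\alpha: S^d \to R$. The source $S^d$ is a finite spectrum, so once we verify that $\id_{MU} \land \alpha$ is null-homotopic, Theorem \ref{smashProductForm} will supply some $N$ with $\alpha^{\land N}: S^{Nd} \to R^{\land N}$ null-homotopic. Post-composing with the iterated multiplication $m^{(N)}: R^{\land N} \to R$ of the ring spectrum $R$ produces $\alpha^N \in \pi_{Nd}(R)$, which is thus null-homotopic, exhibiting $\alpha$ as nilpotent. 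Crucially, no assumptions of connectivity or finite type on $R$ are used along the way, which is exactly what distinguishes the strong form from Theorem \ref{weakRingSpectrumForm}.

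The one non-formal step is to show that $h(\alpha) = 0$ implies $\id_{MU} \land \alpha \simeq 0$. I would establish this from the identity
\[
\id_{MU} \land \alpha \;=\; (m_{MU} \land \id_R) \circ (\id_{MU} \land h(\alpha)),
\]
where $m_{MU}: MU \land MU \to MU$ is the ring spectrum multiplication on $MU$. Unwinding the definition $h(\alpha) = (\eta \land \id_R) \circ \alpha$ rewrites the right-hand side as $(m_{MU} \land \id_R) \circ (\id_{MU} \land \eta \land \id_R) \circ (\id_{MU} \land \alpha)$, and the unit axiom $m_{MU} \circ (\id_{MU} \land \eta) = \id_{MU}$ collapses the leftmost two factors to $\id_{MU \land R}$. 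The identity follows, and $h(\alpha) = 0$ immediately forces $\id_{MU} \land \alpha$ to be null-homotopic.

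There is no serious technical obstacle here; the main subtlety is the conceptual observation that $h(\alpha) \in \pi_d(MU \land R)$ and $\id_{MU} \land \alpha \in [MU \land S^d, MU \land R]$ a priori live in different groups, and that the ring structure on $MU$ is exactly what bridges them via the identity above. Combined with the trivial implication from the present theorem to Theorem \ref{weakRingSpectrumForm} --- obtained by simply restricting attention to connective $R$ of finite type --- this closes the circle of implications listed at the start of the section.
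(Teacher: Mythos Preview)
Your proposal is correct and follows essentially the same route as the paper: deduce that $\id_{MU} \land \alpha$ is null-homotopic, apply the smash product form, and conclude nilpotence in $\pi_*(R)$ via the multiplication on $R$. The paper compresses this into three sentences, asserting the first implication with a bare ``Therefore'' and the last with ``So $\alpha$ is nilpotent''; your version makes both of these explicit, supplying the identity $\id_{MU} \land \alpha = (m_{MU} \land \id_R) \circ (\id_{MU} \land h(\alpha))$ for the first and the postcomposition with $m^{(N)}$ for the second, which is a welcome clarification but not a different argument.
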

	\begin{proof}
		Let $\alpha : S^d \to R$ be an element of $\ker h$. Therefore $\id_{MU} \land \alpha : MU \to MU \land R$ is null-homotopic and hence $\alpha$ is smash nilpotent by the smash product form. So $\alpha$ is nilpotent as required.
	\end{proof}
	For completeness, we pinpoint the places in the proof of the weak version where the additional assumptions of connectivity and finite type were used.
	\begin{enumerate}[itemsep=0pt, label=$-$]
		\item The connectivity of $R$ was used in Proposition \ref{StepI} to establish vanishing lines in the Adams spectral sequence for the $\pi_*(G_j \land R)$.
		\item The finite type of $R$ was used in Lemma \ref{convergence} about the convergence of the Adams spectral sequence.
	\end{enumerate}
	
	\section{Self-map form}
	In this section we deduce the self-map form of the nilpotence theorem from the ring spectrum form.
	\begin{theorem}[Nilpotence theorem, self-map form]\label{selfMapForm}
		Let $X$ be a finite spectrum and let $f: \Sigma^d X \to X$ be a self-map for some $d$. If $MU_*(f)=0$ then $f$ is nilpotent.
	\end{theorem}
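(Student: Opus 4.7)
The plan is to reduce the self-map form to the strong ring spectrum form (Theorem \ref{strongRingSpectrumForm}) via Spanier-Whitehead duality, which repackages the self-map $f$ as an element in the homotopy ring of a ring spectrum built from $X$.

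First I would form the ring spectrum $R := DX \land X$, where $DX$ is the Spanier-Whitehead dual of $X$ (which exists because $X$ is finite). The unit $\eta_R : S \to R$ is the coevaluation corresponding to $\id_X \in [X, X]_0$ under the duality isomorphism $[X, X]_0 \cong \pi_0(DX \land X)$, and the multiplication $m : R \land R \to R$ inserts the evaluation pairing $X \land DX \to S$ between the middle two factors of $DX \land X \land DX \land X$. The self-map $f : \Sigma^d X \to X$ corresponds, via the natural isomorphism $[\Sigma^d X, X]_0 \cong \pi_d(DX \land X) = \pi_d(R)$, to a class $\hat f \in \pi_d(R)$. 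The key general fact, which follows from the compatibility of Spanier-Whitehead duality with the symmetric monoidal structure, is that this bijection converts composition of self-maps into the ring multiplication on $\pi_*(R)$. In particular $\widehat{f^k} = \hat f^{\,k}$, so $f$ is nilpotent if and only if $\hat f$ is nilpotent in the ring $\pi_*(R)$.

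Next I would verify that $\hat f \in \ker h$ for the Hurewicz map $h : \pi_*(R) \to MU_*(R)$. Using the duality identification $MU_*(R) = MU_*(DX \land X) \cong [X, MU \land X]_*$, which again relies on finiteness of $X$, the Hurewicz image $h(\hat f)$ corresponds to the composite $\Sigma^d X \xrightarrow{f} X \xrightarrow{\eta_{MU} \land \id_X} MU \land X$. By the $MU$-module adjunction this composite is null-homotopic precisely when the $MU$-module map $\id_{MU} \land f : MU \land \Sigma^d X \to MU \land X$ is, and this is what the hypothesis $MU_*(f) = 0$ records. Once $h(\hat f) = 0$ is in place, Theorem \ref{strongRingSpectrumForm} produces the desired nilpotence of $\hat f$ and hence, by the previous paragraph, of $f$.

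The main obstacle is the bookkeeping with the Spanier-Whitehead identifications, particularly the checks that composition of self-maps passes to ring multiplication under the duality iso and that $h(\hat f)$ is correctly identified with the adjoint of $\id_{MU} \land f$. Both are formal consequences of the monoidal behaviour of the duality functor and cause no real difficulty, though they must be stated carefully. Beyond this, no genuinely new input is needed: the self-map form falls out as a short corollary of the strong ring spectrum form that has already been proven.
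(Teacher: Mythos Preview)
Your overall strategy is exactly the paper's: pass to the ring spectrum $R = DX \land X$ via Spanier--Whitehead duality and invoke the ring spectrum form. The identifications you set up (unit, multiplication, $\widehat{f^k}=\hat f^{\,k}$, and $h(\hat f)$ corresponding to $(\eta_{MU}\land\id_X)\circ f$ under $[X,MU\land X]_*\cong MU_*(DX\land X)$) are all correct.

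There is, however, a genuine gap in the line ``and this is what the hypothesis $MU_*(f)=0$ records.'' The hypothesis $MU_*(f)=0$ says only that $\pi_*(\id_{MU}\land f)=0$, i.e.\ that $\id_{MU}\land f$ is zero on homotopy groups; it does \emph{not} say that $\id_{MU}\land f$ is null-homotopic, which is what you need for $h(\hat f)=0$. These two conditions are not equivalent for maps of spectra in general, and there is no reason they should coincide here.

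The paper closes this gap by passing to a power of $f$. From $MU_*(f)=0$ one gets that the telescope $MU\land f^{-1}X$ is contractible (homotopy groups commute with this filtered homotopy colimit). Since $X$ is finite and hence compact, the null-homotopy of $X\to MU\land X\to MU\land f^{-1}X$ already factors through a finite stage, so for some $N$ the composite $\Sigma^{Nd}X\xrightarrow{f^N}X\xrightarrow{\eta_{MU}\land\id}MU\land X$ is null. This gives $h(\widehat{f^N})=0$, whence $\widehat{f^N}$ (and then $f$) is nilpotent by the ring spectrum form. Inserting this compactness/telescope step repairs your argument completely.
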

	\begin{proof}
		Let $\widehat{f}: S^d \to DX \land X$ be the adjoint of $f$ under the Spanier-Whitehead duality. Since
		\begin{center}
			$f$ is nilpotent iff $\widehat{f}$ is nilpotent
		\end{center}
		it is equivalent to show that $\widehat{f}$ is nilpotent. We establish this by applying the ring spectrum form of the nilpotence theorem with $R=DX \land X$. There are two conditions that need to be checked.
		\begin{enumerate}[itemsep=0pt, label=$-$]
			\item $DX \land X$ is a ring spectrum:
			
			The unit map $\eta: S \to DX \land X$ is the adjoint of $\id_X$ and the multiplication map is given by $m: DX \land X \land DX \land X \xrightarrow{\id_{DX} \land D\eta \land \id_X} DX \land S \land X \simeq DX \land X$ where $D \eta$ is the Spanier-Whitehead dual of $\eta$. It can easily be verified that $\eta$ and $m$ make $DX \land X$ into a ring spectrum by chasing the required diagrams.
			\item $h(\widehat{f}) =0$ where $h$ is the Hurewicz homomorphism for $MU$:
			
			 To see this, note that $MU_*(f)=0$ so $MU \land f^{-1}X$ is contractible. Since $X$ is finite, the composition $\Sigma^{Nd} X \xrightarrow{f^N} X \to MU \land X$ is null-homotopic for some $N$. Then $h(\widehat{f^N}): S^{Nd} \xrightarrow{f} DX \land X \xrightarrow{\eta \land \id} MU \land DX \land X$ is null-homotopic.
		\end{enumerate}
		We have verified the conditions, so we may apply the ring spectrum form of the nilpotence theorem. It now follows that $\widehat{f^N}$ is nilpotent and hence so is its adjoint $f^N$. Therefore $f$ is nilpotent.
	\end{proof}

%
	
	\begin{remark}
		We have shown that the ring spectrum and the smash product forms of the nilpotence theorem are equivalent. The self-map form as stated in the Theorem \ref{selfMapForm} is genuinely a weaker statement.
	\end{remark}

	\chapter{Applications}
	In this chapter we prove Nishida's theorem and address some related questions.
	\section{Nishida's theorem}
	Nishida's theorem is an immediate corollary of the nilpotence theorem for the sphere spectrum $S$. It was proven in 1973 by Goro Nishida \cite{nishida1973nilpotency} and served as motivation for conjecturing and evidence for the truth of the nilpotence theorem.
	\begin{theorem}[Nishida's theorem]
		Every element of positive degree of $\pi_*^S$ is nilpotent.
	\end{theorem}
	\begin{proof}
		Let $d \in \N$. By Serre's finiteness theorem about homotopy groups of spheres all $\pi_d^S$ are finite and hence all elements of $\pi_d^S$ are torsion. On the other hand Novikov \cite{novikov1962homotopy} showed that $\pi_*(MU) \cong \Z\left[x_1, x_2, \ldots \right]$ with $|x_i|=2i$ is torsion-free. Because $h: \pi_*^S \to \pi_*(MU)$ is a ring homomorphism all positive degree elements of $\pi_*^S$ must vanish. By the ring spectrum form of the nilpotence theorem every element of $\ker h$ is nilpotent as required. 
	\end{proof}
	It may be instructive to think about how our proof of the ring spectrum form of the nilpotence theorem specializes to the case $R=S$. It transpires that, despite this being a very special case of the theorem, the proof does not simplify substantially; the complex part of the proof lies in establishing that $\langle G_j \rangle = \langle G_{j+1} \rangle$, a claim which makes no reference to $R$.
	
	\vspace{1em}
	
	Heuristically, one can explain the situation as follows. The proof does not simplify, because it proceeds by constructing a very powerful and robust machinery (the spectra $G_j$ together with a map $b$) designed to solve a specific hard problem. If one does not develop the theory in its entirety, it just does not work. A colourful analogy comes to mind -- even an otherwise impeccable Ferrari without a single important part, for example a clutch pedal, is unable to move. However, if one is only interested in driving around Cambridge, a bicycle may be a more suitable mode of transportation than a fully functioning Ferrari anyway.
	
	This brings us to the obvious question. Can we adapt the existing proof to produce a simpler one that cannot necessarily prove the nilpotence theorem, but is still powerful enough to do the case $R=S$? Yes, this can be done and it is very much in the spirit of Nishida's original proof. Instead of the contractibility of the telescope $b^{-1}G_j$ we can show the contractibility of $\alpha^{-1}S$ which immediately implies that $\alpha$ is nilpotent.
	
	 \vspace{1 em}
	 
	In the proof of the nilpotence theorem we use the factorization of the map
	$$b^N : \Sigma ^{N|b|} G_j \to D_{Np} \land G_j \to G_j$$
	through the spaces $D_{Np}$ with the homotopy colimit $\varinjlim D_{Np} = H \F_p$ to establish that the identity map on the telescopes passes through $H \F_p \land b^{-1}G_j \simeq *$. This is completely analogous to what Nishida did in his proof. For certain elements $\alpha \in \pi_d^S$ he utilized factorizations of the form
	$$\alpha^N: S^{Nd} \to D_{Np} \to S$$
	previously established by Toda \cite{toda1968extended} to show that $\alpha$ is nilpotent. The nilpotence theorem is thus a vast generalization of both the statement and the proof of Nishida's theorem.
	\begin{remark}
		To be completely historically accurate, we have to point out that Nishida's paper \cite{nishida1973nilpotency} contains two different approaches towards the proof. One of the approaches establishes Nishida's theorem and the other only produces some partial results (for example, the elements of $\pi_*^S$ of prime order are nilpotent). The paper \cite{devinatz1988nilpotence} generalizes the latter approach. 
	\end{remark}
	
%
%
%
%
%
%
%
%
%
	
	\bibliography{mybib}
	\bibliographystyle{ieeetr}
\end{document}